\documentclass[11pt,reqno]{amsart}

\usepackage{amsmath}
\usepackage{amssymb}
\usepackage{amsthm}

\newtheorem{theorem}{Theorem}[section]
\newtheorem{prop}[theorem]{Proposition}
\newtheorem{lem}[theorem]{Lemma}
\newtheorem*{cor}{Corollary}
\theoremstyle{definition}
\newtheorem{defn}[theorem]{Definition}

\theoremstyle{remark}
\newtheorem*{rem}{Remark}
\numberwithin{equation}{section}

\hyphenation{non-commutative}
\hyphenation{non-degenerate}

\begin{document}

\title[Separability idempotents in $C^*$-algebras]
{Separability idempotents in $C^*$-algebras}

\author{Byung-Jay Kahng}
\date{}
\address{Department of Mathematics and Statistics\\ Canisius College\\
Buffalo, NY 14208, USA}
\email{kahngb@canisius.edu}

\author{Alfons Van Daele}
\date{}
\address{Department of Mathematics\\ University of Leuven\\ Celestijnenlaan 200B\\ 
B-3001 Heverlee, BELGIUM}
\email{Alfons.VanDaele@wis.kuleuven.be}

\subjclass[2010]{46L89, 46L51, 16T05, 22A22}
\keywords{Separability idempotent, Weak multiplier Hopf algebra, Locally compact quantum groupoid}

\begin{abstract}
In this paper, we study the notion of a {\em separability idempotent\/} in the $C^*$-algebra 
framework.  This is analogous to the notion in the purely algebraic setting, typically considered 
in the case of (finite-dimensional) algebras with identity, then later also considered in the multiplier 
algebra framework by the second-named author.  The current work was motivated by the appearance 
of such objects in the authors' ongoing work on locally compact quantum groupoids.
\end{abstract}
\maketitle

{\sc Introduction.}

Consider a {\em groupoid\/} $G$ over the set of units $G^{(0)}$, together with the maps $s:G\to
G^{(0)}$ and $t:G\to G^{(0)}$ (the ``source map'' and the ``target map'').  This means 
that there is a set of ``composable pairs'' $G^{(2)}=\bigl\{(p,q)\in G\times G: s(p)=t(q)\bigr\}$, 
on which the product $pq$ in $G$ is defined.  This product is assumed to be associative, 
in an appropriate sense.  The set of units, $G^{(0)}$, may be naturally regarded as a subset 
of $G$.  There exists also the inverse map $p\mapsto p^{-1}$ (so that $(p^{-1})^{-1}=p$), 
for which we have $s(p^{-1})=t(p)$, $t(p^{-1})=s(p)$, and satisfying some natural conditions. 
For a more detailed discussion on the definition and the basic theory of groupoids, refer to 
\cite{Brbook},  \cite{Higbook}.  The groupoid notion can be further extended to incorporate 
locally compact topology, which is the notion of a {\em locally compact groupoid\/}.  For this, 
refer to \cite{Renbook}, \cite{Patbook}.

Suppose $G$ is a groupoid and consider $A=K(G)$, the set of all complex-valued functions 
on $G$ having finite support.  For the time being, let us disregard any topology on $G$. 
Under the pointwise multiplication, $A$ becomes a commutative algebra.

In particular, if $G$ is a finite groupoid, the algebra $A$ becomes unital ($1\in A$).  In that case, 
it is known that $A$ can be given a structure of a {\em weak Hopf algebra\/}, together with the 
map $\Delta$ from A into $A\otimes A$ (algebraic tensor product), defined by
$$
(\Delta f)(p,q):=\left\{\begin{matrix}f(pq) & {\text { if $s(p)=t(q)$}} \\ 0 & {\text { otherwise}}
\end{matrix}\right..
$$
The $\Delta$ map is referred to as the ``comultiplication'' (or ``coproduct'').  A weak Hopf algebra 
is in general noncommutative, but includes the above example as a fundamental case.  For more 
discussion on weak Hopf algebras, refer to \cite{BNSwha1}, \cite{BSwha2}. 
In a sense, a weak Hopf algebra is a finite quantum groupoid (see also \cite{Yagroupoid}, \cite{NVfqg}, 
\cite{Valfqg}).  Recently, at the purely algebraic level, this notion has been further generalized to include 
the case of non-finite groupoids (so the algebra $A$ is non-unital), which is the notion of a {\em weak 
multiplier Hopf algebra\/} developed by one of us (Van~Daele) and Wang.  Refer to \cite{VDWangwha0}, 
\cite{VDWangwha1}.

If $(A,\Delta)$ is a weak multiplier Hopf algebra, there exists a certain canonical idempotent 
element $E$, playing the role of ``$\Delta(1)$''.  Among the main properties of $E$ is the fact 
that it is a {\em separability idempotent\/}.  Namely, there exist algebras $B$ and $C$ such that 
$E$ is an idempotent element contained in the multiplier algebra $M(B\otimes C)$, and it satisfies 
some number of conditions.  The case that is relevant to us is when $A$ is a ${}^*$-algebra 
and is ``regular'' (see Section~4 of \cite{VDWangwha1}).  As a consequence, it turns 
out that there exist bijective anti-automorphisms $S_B:B\to C$ and $S_C:C\to B$ such that 
$$
E(b\otimes 1)=E\bigl(1\otimes S_B(b)\bigr),\qquad (1\otimes c)E=\bigl(S_C(c)\otimes1\bigr)E,
$$
for $b\in B$, $c\in C$.  

In the special (commutative) case of the weak multiplier Hopf algebra $A=K(G)$ for a groupoid 
$G$, let $B$ be the subalgebra of $M(A)$ given by the pull-back of the algebra $K(G^{(0)})$ 
via the source map $s:G\to G^{(0)}$, and $C\subseteq M(A)$ the pull-back of the algebra 
$K(G^{(0)})$ via the target map $t:G\to G^{(0)}$.  Then we will have $E\in M(B\otimes C)$. 
The maps $S_B$, $S_C$ would come from the antipode map of the weak multiplier Hopf algebra.

One of the main reasons for studying separability idempotents lies in the fact that they play 
an important role in the theory of weak multiplier Hopf algebras.  In a sense, any separability 
idempotent arises from a weak multiplier Hopf algebra, and having an appropriate separability 
idempotent element can guide us to construct an example of a weak multiplier Hopf algebra: 
See Proposition~3.2 of \cite{VDWangwha2}.  See \cite{VDsepid} (refer to both versions v1 and v2, 
as they are substantially different), where a systematic discussion is given on separability idempotents 
in the setting of multiplier algebras, together with some examples.  Separability idempotents also 
arise naturally from the theory of discrete quantum groups (See Section~3 of \cite{VDsepid}).

Let us now turn our attention to the case when the groupoid $G$ is equipped with a compatible 
locally compact topology.  A natural question to ask is whether we can formulate a $C^*$-algebraic 
theory similar to that of a weak multiplier Hopf algebra, together with a suitable separability idempotent 
element, keeping all the topological aspects. 

The aim of the current paper is to define, develop, and clarify the notion of a separability 
idempotent in the setting of $C^*$-algebras.  While the setting may be more general, this makes 
things a little restrictive as well, because we need the element to be compatible with the 
$C^*$-structures on $B$ and $C$, unlike in the purely algebraic case.  On the other hand, 
not every property from the purely algebraic case will carry over (for instance, the maps 
$S_B$ and $S_C$ will only have to be densely defined).  However, it is worth noting that we are 
still able to formulate below a reasonable notion.  

Such separability idempotents turn out to be quite useful in developing a $C^*$-algebraic 
counterpart to the weak multiplier Hopf algebra theory, which would provide us with a (sub)class 
of {\em locally compact quantum groupoids\/}.  This is discussed in the authors' upcoming 
work \cite{BJKVD_LSthm}, \cite{BJKVD_qgroupoid1}, \cite{BJKVD_qgroupoid2}. 

In the purely algebraic theory of separability idempotents (see \cite{VDsepid}), the defining 
conditions imply the existence of two {\em distinguished linear functionals\/}, denoted 
$\varphi_C$ and $\varphi_B$, on the algebras $C$ and $B$ respectively.  In our formulation 
below (in the $C^*$-algebra setting), we assume instead the existence of two faithful KMS weights 
$\mu$ (on $C$) and $\nu$ (on $B$), from which the other properties follow, including the densely 
defined anti-homomorphisms.  This approach is somewhat similar in philosophy to the general 
theory of locally compact quantum groups as given in \cite{KuVa}, \cite{KuVavN}, \cite{VDvN}.

We will see that the separability idempotent $E\in M(B\otimes C)$ is more or less determined 
by the $C^*$-algebra $B$ and the weight $\nu$, where $\otimes$ is now the $C^*$-algebra (spatial) 
tensor product.  Meanwhile, the existence of $E$ means that the pair $(B,\nu)$ cannot be arbitrarily 
chosen.  It is an interesting question to explore about the nature of the $C^*$-algebra $B$, 
and we will do this towards the end of the paper. 

The paper is organized as follows.  In Section~1, we gather some basic results concerning 
$C^*$-algebra weights.  The purpose here is to set the terminologies and notations to be 
used in later sections.  Our main definition and the properties are given in Sections~2 and 3. 
We introduce the notion of a separability triple and a separability idempotent in Section~2. 
As a consequence, we obtain the (densely-defined) anti-homomorphisms $\gamma_B$ and 
$\gamma_C$ (These are the maps corresponding to $S_B$ and $S_C$ in the purely algebraic 
case.).  More results and properties are collected in Section~3, including the fact that a separability 
idempotent is ``full''. 

In Section~4, some examples are considered.  In addition to the natural example coming 
from a groupoid or a weak multiplier Hopf algebra, we included an example arising from 
a certain action groupoid and another example when the base $C^*$-algebra is the algebra 
of compact operators. 

In Section~5, the separability idempotent notion is considered in the von Neumann algebra 
setting.   We show that the $C^*$-algebra approach and the von Neumann algebra approach 
are equivalent, in the sense that one can start from the $C^*$-algebra framework, then construct 
a separability idempotent in the von Neumann algebra framework, then from this, one can 
recover the idempotent and the $C^*$-algebra we began with.  

Finally, in Section~6, we explore the nature of the base $C^*$-algebra $B$, which would 
allow the existence of a separability idempotent.  We claim that $B$ has to be a postliminal 
$C^*$-algebra.

\bigskip

{\sc Acknowledgments.}

This work began during the first-named author (Byung-Jay Kahng)'s sabbatical visit to 
University of Leuven during 2012/2013.  He is very much grateful to his coauthor (Alfons 
Van Daele) and the mathematics department at University of Leuven for their warm support 
and hospitality during his stay. 

\bigskip

\section{Preliminaries on $C^*$-algebra weights}

In this section, we review some basic notations and results concerning the weights on 
$C^*$-algebras, which will be useful later.  For standard terminologies and for a more 
complete treatment on $C^*$-algebra weights, refer to \cite{Cm}, \cite{Str}, \cite{Tk2}. 
See also a nice survey given in \cite{KuVaweightC*}.

Let $A$ be a $C^*$-algebra.  A function $\psi:A^+\to[0,\infty]$ is called a weight on $A$, 
if $\psi(x+y)=\psi(x)+\psi(y)$ for all $x,y\in A^+$ and $\psi(\lambda x)=\lambda\psi(x)$ 
for all $x\in A^+$ and $\lambda\in\mathbb{R}^+$, with the convention that $0\cdot\infty=0$.

Given a weight $\psi$ on $A$, we can consider the following subspaces:
\begin{align}
{\mathfrak M}_\psi^+:&=\bigl\{a\in A^+:\psi(a)<\infty\bigr\} \notag \\
{\mathfrak N}_\psi:&=\bigl\{a\in A:\psi(a^*a)<\infty\bigr\} \notag \\
{\mathfrak M}_\psi:&={\mathfrak N}_\psi^*{\mathfrak N}_\psi
=\operatorname{span}\{y^*x:x,y\in{\mathfrak N}_\psi\}. \notag
\end{align}
It is easy to see that ${\mathfrak N}_\psi$ is a left ideal in $M(A)$, the multiplier algebra 
of $A$.  So ${\mathfrak M}_\psi\subseteq{\mathfrak N}_\psi$.  Moreover, ${\mathfrak M}_\psi$ 
is a ${}^*$-subalgebra of $A$ spanned by ${\mathfrak M}_\psi^+$, which turns out to be its 
positive part.  It is possible to naturally extend $\psi$ on ${\mathfrak M}_\psi^+$ to a map 
from ${\mathfrak M}_\psi$ into $\mathbb{C}$, which we will still denote by $\psi$.

A weight $\psi$ is said to be ``faithful'', if $\psi(a)=0$, $a\in A^+$, implies $a=0$.  We say 
a weight $\psi$ is densely-defined (or ``semi-finite''), if ${\mathfrak N}_\psi$ is dense in $A$. 
Throughout this paper, a weight (on a $C^*$-algebra $A$) will always be assumed to be 
faithful, semi-finite, and also lower semi-continuous. 

It is useful to consider the following sets, first considered by Combes \cite{Cm}:
\begin{align}
{\mathcal F}_\psi&:=\bigl\{\omega\in A^*_+:\omega(x)\le\psi(x),\forall x\in A^+\bigr\}, \notag \\
{\mathcal G}_\psi&:=\bigl\{\alpha\omega:\omega\in{\mathcal F}_\psi,{\text { for $\alpha\in(0,1)$}}\bigr\}.
\notag
\end{align}
Here $A^*$ denotes the norm dual of $A$.  Note that on ${\mathcal F}_\psi$, one can give 
a natural order inherited from $A^*_+$.  Meanwhile, ${\mathcal G}_\psi$ is a directed subset of 
${\mathcal F}_\psi$.  That is, for every $\omega_1,\omega_2\in{\mathcal G}_\psi$, there exists 
an element $\omega\in{\mathcal G}_\psi$ such that $\omega_1\le\omega$, $\omega_2\le\omega$. 
Because of this, ${\mathcal G}_\psi$ is often used as an index set (of a net).  Since $\psi$ is 
lower semi-continuous, we would have: 
$\psi(x)=\lim_{\omega\in{\mathcal G}_\psi}\bigl(\omega(x)\bigr)$, for $x\in A^+$.

One can associate to $\psi$ a GNS-construction $({\mathcal H}_\psi,\pi_\psi,\Lambda_\psi)$. Here, 
${\mathcal H}_\psi$ is a Hilbert space, $\Lambda_\psi:{\mathfrak N}_\psi\to{\mathcal H}_\psi$ is 
a linear map such that $\Lambda_\psi({\mathfrak N}_\psi)$ is dense in ${\mathcal H}_\psi$, and 
$\bigl\langle\Lambda_\psi(a),\Lambda_\psi(b)\bigr\rangle=\psi(b^*a)$ for $a,b\in{\mathfrak N}_\psi$. 
Since $\psi$ is assumed to be lower semi-continuous, we further have that $\Lambda_\psi$ is closed. 
And, $\pi_\psi$ is the GNS representation of $A$ on ${\mathcal H}_\psi$, given by $\pi_\psi(a)
\Lambda_\psi(b)=\Lambda_\psi(ab)$ for $a\in A$, $b\in{\mathfrak N}_\psi$.  The GNS representation 
is non-degenerate, and the GNS construction is unique up to a unitary transformation.

Every $\omega\in A^*$ has a unique extension to the level of the multiplier algebra $M(A)$, which 
we may still denote by $\omega$.  From this fact, it follows easily that any proper weight $\psi$ 
on $A$ has a natural extension to the weight $\bar{\psi}$ on $M(A)$.  For convenience, we will use 
the notations $\overline{\mathfrak N}_\psi={\mathfrak N}_{\bar{\psi}}$ and $\overline{\mathfrak M}_\psi
={\mathfrak M}_{\bar{\psi}}$.  Also, the GNS construction for a proper weight $\psi$ on $A$ has 
a natural extension to the GNS construction for $\bar{\psi}$ on $M(A)$, with ${\mathcal H}_{\bar{\psi}}
={\mathcal H}_\psi$.

To give somewhat of a control over the non-commutativity of $A$, one introduces the notion of 
KMS weights (see \cite{Cm} and Chapter~VIII of \cite{Tk2}).  The notion as defined below (due to 
Kustermans \cite{KuKMS}) is slightly different from the original one given by Combes, but equivalent.

\begin{defn}\label{KMSweight}
Let $\psi$ be a faithful, semi-finite, lower semi-continuous weight.  It is called a {\em KMS weight\/}, 
if there exists a norm-continuous one-parameter group of automorphisms $(\sigma_t)_{t\in\mathbb{R}}$
of $A$ such that $\psi\circ\sigma_t=\psi$ for all $t\in\mathbb{R}$, and 
$$
\psi(a^*a)=\psi\bigl(\sigma_{i/2}(a)\sigma_{i/2}(a)^*\bigr) {\text { for all $a\in{\mathcal D}
(\sigma_{i/2})$.}}
$$
Here, $\sigma_{i/2}$ is the analytic generator of the one-parameter group $(\sigma_t)$ at $z=i/2$, 
and ${\mathcal D}(\sigma_{i/2})$ is its domain.  In general, with the domain properly defined, it is 
known that $\sigma_z$, $z\in\mathbb{C}$, is a closed map.
\end{defn}

The one-parameter group $(\sigma_t)$ is called the ``modular automorphism group'' for $\psi$. 
It is uniquely determined, as $\psi$ is faithful.  In the special case when $\psi$ is a trace, 
that is, $\psi(a^*a)=\psi(aa^*)$ for $a\in{\mathfrak N}_\psi$, it is clear that $\psi$ is KMS, 
with the modular automorphism group being trivial ($\sigma\equiv\operatorname{Id}$).

Basic properties of KMS weights can be found in \cite{Tk2}, \cite{KuKMS}, \cite{KuVaweightC*}. 
In particular, there exists a (unique) anti-unitary operator $J$ (the ``modular conjugation'') 
on ${\mathcal H}_\psi$ such that $J\Lambda_\psi(x)=\Lambda_\psi\bigl(\sigma_{i/2}(x)^*\bigr)$, 
for $x\in{\mathfrak N}_\psi\cap{\mathcal D}(\sigma_{i/2})$.  There exists also a strictly positive 
operator $\nabla$ (the ``modular operator'') on ${\mathcal H}_\psi$ such that 
$\nabla^{it}\Lambda_\psi(a)=\Lambda_\psi\bigl(\sigma_t(a)\bigr)$, for $a\in{\mathfrak N}_{\psi}$ 
and $t\in\mathbb{R}$.  Following is a standard result, which can be easily extended to elements 
in the multiplier algebra:

\begin{lem}\label{lemKMS}
Let $\psi$ be a KMS weight on a $C^*$-algebra $A$, with GNS representation $({\mathcal H}_\psi,
\pi_\psi,\Lambda_\psi)$.  Then we have:
\begin{enumerate}
  \item Let $a\in{\mathcal D}(\sigma_{i/2})$ and $x\in{\mathfrak N}_\psi$. 
Then $xa\in{\mathfrak N}_\psi$ and $\Lambda_\psi(xa)=J\pi_\psi\bigl(\sigma_{i/2}(a)\bigr)^*J
\Lambda_\psi(x)$.
  \item Let $a\in{\mathcal D}(\sigma_{-i})$ and $x\in{\mathfrak M}_\psi$.  Then 
$ax, x\sigma_{-i}(a)\in{\mathfrak M}_\psi$ and $\psi(ax)=\psi\bigl(x\sigma_{-i}(a)\bigr)$.
  \item Let $x\in{\mathfrak N}_\psi\cap{\mathfrak N}_\psi^*$ and $a\in{\mathfrak N}_\psi^*\cap 
{\mathcal D}(\sigma_{-i})$ be such that $\sigma_{-i}(a)\in{\mathfrak N}_\psi$.  Then 
$\psi(ax)=\psi\bigl(x\sigma_{-i}(a)\bigr)$.
\end{enumerate}
\end{lem}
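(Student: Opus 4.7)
The plan is to establish (1) as the fundamental Tomita--Takesaki identity, then to derive (2) and (3) from it through approximation by analytic elements. For (1), I would first restrict $x$ to the norm-dense subspace of entire analytic elements of $\mathfrak{N}_\psi$, for which $\sigma_{i/2}(xa) = \sigma_{i/2}(x)\sigma_{i/2}(a)$ is unambiguous. Using $\pi_\psi(b)^* = \pi_\psi(b^*)$, the defining identity $J\Lambda_\psi(y) = \Lambda_\psi(\sigma_{i/2}(y)^*)$ applied twice, and the basic relation $\sigma_z(y^*) = \sigma_{\bar z}(y)^*$, the expression $J\pi_\psi(\sigma_{i/2}(a))^*J\Lambda_\psi(x)$ telescopes into $\Lambda_\psi(xa)$, simultaneously establishing $xa \in \mathfrak{N}_\psi$. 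The general case $x \in \mathfrak{N}_\psi$ follows from the density of entire analytic elements in $\mathfrak{N}_\psi$, the closedness of $\Lambda_\psi$, and the boundedness of the operator $J\pi_\psi(\sigma_{i/2}(a))^*J$.

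For (2), I would first obtain the membership $ax, x\sigma_{-i}(a) \in \mathfrak{M}_\psi$ from (1): writing $x = \sum y_i^* z_i$ with $y_i, z_i \in \mathfrak{N}_\psi$ and noting that $a^* \in \mathcal{D}(\sigma_{i/2})$ (since $a \in \mathcal{D}(\sigma_{-i})$ forces $a^* \in \mathcal{D}(\sigma_i) \subseteq \mathcal{D}(\sigma_{i/2})$ by the three-lines property of the analytic generator), part (1) yields $y_i a^* \in \mathfrak{N}_\psi$, whence $ax = \sum (y_i a^*)^* z_i \in \mathfrak{N}_\psi^*\mathfrak{N}_\psi = \mathfrak{M}_\psi$; a symmetric application handles $x\sigma_{-i}(a)$. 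For the equality of values, I would approximate $a$ by a smoothing sequence $a_n$ of entire analytic elements with $a_n \to a$ and $\sigma_{-i}(a_n) \to \sigma_{-i}(a)$ in norm. For each $a_n$, rewriting $\psi$ on $\mathfrak{M}_\psi$ as an inner product through the GNS construction and invoking (1) produces $\psi(a_n x) = \psi(x\sigma_{-i}(a_n))$, and passing to the limit completes the argument.

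For (3), the strategy parallels (2), but since $x$ need not lie in $\mathfrak{M}_\psi$ the requisite decomposition is supplied by the hypotheses themselves: $ax = (a^*)^* x \in \mathfrak{N}_\psi^*\mathfrak{N}_\psi = \mathfrak{M}_\psi$ because $a^* \in \mathfrak{N}_\psi$, and $x\sigma_{-i}(a) = (x^*)^*\sigma_{-i}(a) \in \mathfrak{N}_\psi^*\mathfrak{N}_\psi = \mathfrak{M}_\psi$ by the assumption $\sigma_{-i}(a) \in \mathfrak{N}_\psi$. Approximating $a$ by entire analytic elements $a_n$ with $a_n \to a$, $a_n^* \to a^*$, and $\sigma_{-i}(a_n) \to \sigma_{-i}(a)$ in norm (the standard smoothing preserves $\mathfrak{N}_\psi$ and $\mathfrak{N}_\psi^*$ because $\sigma_t$ does), one reduces again to the analytic case; here (1) combined with the inner-product formula $\psi(a_n x) = \langle \Lambda_\psi(x), \Lambda_\psi(a_n^*)\rangle$ yields the identity, and the limit gives the claim.

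The main obstacle is the careful domain tracking in part (1): the telescoping computation requires the compositions $\sigma_{i/2}\circ\sigma_{i/2}$ and $\sigma_{-i/2}\circ\sigma_{i/2} = \operatorname{Id}$ to be applied legitimately, which in turn forces all intermediate expressions to lie in the appropriate domains of the unbounded analytic generator $\sigma_z$. Restricting to entire analytic $x$ bypasses this at the initial step, and the extension to arbitrary $x \in \mathfrak{N}_\psi$ is then a standard closure argument; once (1) is in hand, (2) and (3) reduce to bookkeeping with analytic-element approximation and the invariance of $\mathfrak{N}_\psi$ under the modular automorphism group.
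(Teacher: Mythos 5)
The paper does not actually prove this lemma: it is introduced with the words ``Following is a standard result'' and the reader is sent to the literature on KMS weights (Takesaki, Kustermans, Kustermans--Vaes), so there is no in-paper argument to compare against. Your sketch reproduces the standard proof from those sources and is essentially sound. For (1), the telescoping works exactly as you say: for $x$ in the Tomita algebra, $J\Lambda_\psi(x)=\Lambda_\psi(\sigma_{i/2}(x)^*)$, then $\pi_\psi(\sigma_{i/2}(a))^*$ produces $\Lambda_\psi\bigl((\sigma_{i/2}(xa))^*\bigr)$ via $\sigma_{i/2}(xa)=\sigma_{i/2}(x)\sigma_{i/2}(a)$, and a second application of $J$ together with $\sigma_z(y^*)=\sigma_{\bar z}(y)^*$ and $\sigma_{-i/2}\circ\sigma_{i/2}=\operatorname{Id}$ returns $\Lambda_\psi(xa)$; the extension to all of ${\mathfrak N}_\psi$ by graph-norm density of the Tomita algebra, closedness of $\Lambda_\psi$, and boundedness of $J\pi_\psi(\sigma_{i/2}(a))^*J$ is correct. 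The membership claims in (2) and (3) also follow as you describe, using $a^*\in{\mathcal D}(\sigma_i)\subseteq{\mathcal D}(\sigma_{i/2})$ and $\sigma_{-i}(a)\in{\mathcal D}(\sigma_i)\subseteq{\mathcal D}(\sigma_{i/2})$.

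The one place your write-up is thinner than it should be is the identity $\psi(a_nx)=\psi\bigl(x\sigma_{-i}(a_n)\bigr)$ for the analytic approximants in (3). Since the hypotheses give only $a^*\in{\mathfrak N}_\psi$ and $\sigma_{-i}(a)\in{\mathfrak N}_\psi$ (not $a\in{\mathfrak N}_\psi$), the two sides are $\bigl\langle\Lambda_\psi(x),\Lambda_\psi(a_n^*)\bigr\rangle$ and $\bigl\langle\Lambda_\psi(\sigma_{-i}(a_n)),\Lambda_\psi(x^*)\bigr\rangle$, and equating them is not just ``(1) plus the inner-product formula'': one needs the full Tomita relations $\Lambda_\psi(y^*)=J\nabla^{1/2}\Lambda_\psi(y)$ and $\Lambda_\psi\bigl(\sigma_z(y)\bigr)=\nabla^{iz}\Lambda_\psi(y)$ together with the antiunitarity of $J$, which turn $\Lambda_\psi(\sigma_{-i}(a_n))=J\nabla^{-1/2}\Lambda_\psi(a_n^*)$ and let the $\nabla^{\pm1/2}$ factors cancel. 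With that spelled out the computation closes, and your limit passage is legitimate because $(a_n)^*$ and $\sigma_{-i}(a_n)$ are precisely the smoothings of $a^*$ and of $\sigma_{-i}(a)$, hence converge in the graph norm of $\Lambda_\psi$. So there is no gap in substance, but part (3) requires this extra Tomita--Takesaki bookkeeping beyond what you cite.
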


It is known (see \cite{Tk2}, \cite{KuVaweightC*}) that when we lift a KMS weight 
to the level of von Neumann algebra $\pi_\psi(A)''$ in an evident way, we obtain a normal, 
semi-finite, faithful (``n.s.f.'')~weight $\widetilde{\psi}$.  In addition, the modular 
automorphism group $(\sigma^{\widetilde{\psi}}_t)$ for the n.s.f.~weight $\widetilde{\psi}$ 
leaves the $C^*$-algebra $B$ invariant, and the restriction of $(\sigma^{\widetilde{\psi}}_t)$ 
to $B$ coincides with our $(\sigma_t)$.  The operators $J$, $\nabla$  above are none other 
than the restrictions of the corresponding operators arising in the standard Tomita--Takesaki 
theory of von Neumann algebra weights.

In Tomita--Takesaki theory, a useful role is played by the Tomita ${}^*$-algebra.  In our case, 
consider the n.s.f.~weight $\widetilde{\psi}$ on the von Neumann algebra $\pi_\psi(A)''$.  Then 
the Tomita ${}^*$-algebra for $\widetilde{\psi}$, denoted by ${\mathcal T}_{\widetilde{\psi}}$, 
is as follows:
$$
{\mathcal T}_{\widetilde{\psi}}:=\bigl\{x\in{\mathfrak N}_{\widetilde{\psi}}\cap
{\mathfrak N}_{\widetilde{\psi}}^*:{\text { $x$ is analytic w.r.t. $\sigma^{\widetilde{\psi}}$, and 
$\sigma^{\widetilde{\psi}}_z(x)\in{\mathfrak N}_{\widetilde{\psi}}\cap{\mathfrak N}_{\widetilde{\psi}}^*$, 
$\forall z\in\mathbb{C}$}}\bigr\}.
$$
It is a strongly ${}^*$-dense subalgebra in $\pi_\psi(A)''$, and it satisfies many useful properties. 
Refer to the standard textbooks on the modular theory, for instance \cite{Tk2}, \cite{Str}. 
Furthermore, it is known that ${\mathcal T}_{\psi}:={\mathcal T}_{\widetilde{\psi}}\cap A$ is 
(norm)-dense in the $C^*$-algebra $A$, so turns out to be useful as we work with the weight 
$\psi$ in the $C^*$-algebra setting.  In particular, it is easy to see that for every $z\in\mathbb{C}$, 
the domains ${\mathcal D}(\sigma_z)$, which contain ${\mathcal T}_{\psi}$, are dense in $A$.

\section{Definition of a separability idempotent}

In this section, we wish to give the proper definition of a separability idempotent element in the 
$C^*$-algebra framework.  Let us begin by considering a triple $(E,B,\nu)$, where 
\begin{itemize}
  \item $B$ is a $C^*$-algebra,
  \item $\nu$ is a (faithful) KMS weight on $B$, together with its associated norm-continuous automorphism 
group $(\sigma^{\nu}_t)$,
  \item $E\in M(B\otimes C)$ is a self-adjoint idempotent element (so a projection, satisfying $E^2=E=E^*$), 
for some $C^*$-algebra $C$ such that there exists a ${}^*$-anti-isomorphism $R:B\to C$.
\end{itemize}

\begin{rem}
In the above, we can see immediately that $C\cong B^{\operatorname{op}}$ as $C^*$-algebras.  Even so, 
there is no reason to assume that $C$ and $B^{\operatorname{op}}$ to be exactly same.  Going forward, 
our triple $(E,B,\nu)$ will be understood in such a way that the $C^*$-algebra $C$ and the ${}^*$-anti-isomorphism 
$R:B\to C$ are implicitly fixed, with $C$ possibly different from $B^{\operatorname{op}}$.
\end{rem}

The weight theory says ${\mathfrak N}_\nu$ is only a left ideal in $B$, but we have below a useful 
result involving $B_0={\mathcal D}(\sigma^{\nu}_{i/2})$, which is dense in $B$.

\begin{prop}\label{B0}
For any $x\in{\mathfrak N}_\nu$ and $b\in B_0$, we have: $xb
\in{\mathfrak N}_\nu$.  As a consequence, we have:
$$
{\mathfrak N}_\nu B_0\subseteq{\mathfrak N}_\nu,\quad {\mathfrak M}_\nu B_0\subseteq{\mathfrak M}_\nu,
\quad B_0^*{\mathfrak M}_\nu\subseteq{\mathfrak M}_\nu.
$$
\end{prop}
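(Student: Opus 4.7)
The plan is to derive everything from Lemma~\ref{lemKMS}(1), which is the key analytic input. That lemma, applied with $\psi=\nu$, says precisely that if $x\in{\mathfrak N}_\nu$ and $b\in{\mathcal D}(\sigma^{\nu}_{i/2})=B_0$, then $xb\in{\mathfrak N}_\nu$ (with an explicit formula for $\Lambda_\nu(xb)$ that we won't actually need here). So the first assertion is immediate, and the inclusion ${\mathfrak N}_\nu B_0\subseteq{\mathfrak N}_\nu$ is just a restatement of it.

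For the second inclusion, I would use the spanning description ${\mathfrak M}_\nu=\operatorname{span}\{y^*x:x,y\in{\mathfrak N}_\nu\}$ recalled in Section~1. Given $m=y^*x$ with $x,y\in{\mathfrak N}_\nu$ and $b\in B_0$, simply write $mb=y^*(xb)$; by the first part $xb\in{\mathfrak N}_\nu$, hence $mb\in{\mathfrak N}_\nu^*{\mathfrak N}_\nu={\mathfrak M}_\nu$. Linearity then gives ${\mathfrak M}_\nu B_0\subseteq{\mathfrak M}_\nu$.

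For the third inclusion, the quickest route is to use the fact that ${\mathfrak M}_\nu$ is a ${}^*$-subalgebra (noted right after its definition in Section~1), so ${\mathfrak M}_\nu^*={\mathfrak M}_\nu$. Taking adjoints of ${\mathfrak M}_\nu B_0\subseteq{\mathfrak M}_\nu$ yields $B_0^*{\mathfrak M}_\nu=(\,{\mathfrak M}_\nu B_0)^*\subseteq{\mathfrak M}_\nu^*={\mathfrak M}_\nu$, as desired. (Alternatively, write $b^*m=b^*y^*x=(yb)^*x$ directly; since $yb\in{\mathfrak N}_\nu$, we get $(yb)^*x\in{\mathfrak N}_\nu^*{\mathfrak N}_\nu={\mathfrak M}_\nu$.)

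There is essentially no obstacle: the whole proposition is a bookkeeping consequence of Lemma~\ref{lemKMS}(1), the fact that $B_0={\mathcal D}(\sigma^{\nu}_{i/2})$ is dense in $B$ (which justifies calling the resulting inclusions substantive even though ${\mathfrak N}_\nu$ is merely a left ideal), and the ${}^*$-algebra structure of ${\mathfrak M}_\nu$. The only subtlety worth noting is that the KMS hypothesis on $\nu$ is what makes Lemma~\ref{lemKMS}(1) applicable in the first place; without it, right multiplication by analytic elements would not preserve ${\mathfrak N}_\nu$.
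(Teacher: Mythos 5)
Your proposal is correct and follows essentially the same route as the paper: the first assertion is read off from Lemma~\ref{lemKMS}(1), and the remaining inclusions follow from ${\mathfrak M}_\nu={\mathfrak N}_\nu^*{\mathfrak N}_\nu$ exactly as you write. The only cosmetic difference is that the paper records the explicit estimate $\nu\bigl((xb)^*(xb)\bigr)\le\bigl\|\sigma^{\nu}_{i/2}(b)\bigr\|^2\nu(x^*x)$ coming from the lemma's formula for $\Lambda_\nu(xb)$, whereas you invoke the membership statement directly; both are fine.
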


\begin{proof}
See (1) of Lemma~\ref{lemKMS}, from which it follows that $\nu\bigl((xb)^*(xb)\bigr)=
\bigl\langle\Lambda_{\nu}(xb),\Lambda_{\nu}(xb)\bigr\rangle\le\bigl\|\sigma^{\nu}_{i/2}(b)\bigr\|^2
\nu(x^*x)$.  The result ${\mathfrak N}_\nu B_0\subseteq{\mathfrak N}_\nu$ is an immediate 
consequence.  As for the next two results, use the fact that ${\mathfrak M}_\nu
={\mathfrak N}_\nu^*{\mathfrak N}_\nu$.
\end{proof}

Consider the triple $(E,B,\nu)$ as above, with $C$ and $R$ understood.  We now give the definition of the 
separability triple:

\begin{defn}\label{separabilitytriple}
We will say that $(E,B,\nu)$ is a {\em separability triple\/}, if the following conditions hold:
\begin{enumerate}
  \item $(\nu\otimes\operatorname{id})(E)=1$
  \item For $b\in B_0$, we have: $(\nu\otimes\operatorname{id})\bigl(E(b\otimes1)\bigr)
=R\bigl(\sigma^{\nu}_{i/2}(b)\bigr)$.
\end{enumerate}
If $(E,B,\nu)$ forms a separability triple, then we say $E$ is a {\em separability idempotent\/}.
\end{defn}

\begin{rem}
In Definition~\ref{separabilitytriple} above, (1) means that for any $\omega\in C^*_+$, 
we require $(\operatorname{id}\otimes\omega)(E)\in\overline{\mathfrak M}_\nu\,\bigl(\subseteq 
M(B)\bigr)$ and that $\nu\bigl((\operatorname{id}\otimes\omega)(E)\bigr)=\omega(1)$.  From this, 
it will follow that $(\operatorname{id}\otimes\omega)(E)\in\overline{\mathfrak M}_\nu$ for any 
$\omega\in C^*$, and that $\nu\bigl((\operatorname{id}\otimes\omega)(E)\bigr)=\omega(1),
\forall\omega\in C^*$.  We are using here a natural extension of $\nu$ to the multiplier 
algebra $M(B)$, still denoted by $\nu$.  However, we will soon see that $(\operatorname{id}
\otimes\omega)(E)\in B$, actually.  See Corollary following Proposition~\ref{BtensorC}.

By Proposition~\ref{B0}, we know $(\operatorname{id}\otimes\omega)(E)\,b\in{\mathfrak M}_\nu,
\forall\omega\in C^*$ and $\forall b\in B_0$.  What condition~(2) of the definition is saying is that we 
further require 
$\nu\bigl((\operatorname{id}\otimes\omega)(E)\,b\bigr)=\omega\bigl((R\circ\sigma^{\nu}_{i/2})(b)\bigr)$.
\end{rem}

The following result provides sort of a uniqueness result for the separability idempotent $E$.

\begin{prop}\label{uniqueE}
Let $E$ be a separability idempotent.  Then $E$ is uniquely determined by the data $(B,\nu,R)$.
\end{prop}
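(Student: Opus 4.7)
The plan is to show that condition~(2) of Definition~\ref{separabilitytriple} pins down each $C^*$-slice of $E$, and then to recover $E$ itself from the fact that slice functionals separate $M(B\otimes C)$.

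Suppose $E$ and $E'$ are both separability idempotents for the fixed data $(B,\nu,R)$. For $\omega\in C^*$ set $e_\omega:=(\operatorname{id}\otimes\omega)(E)$ and $e'_\omega:=(\operatorname{id}\otimes\omega)(E')$; by the remark following Definition~\ref{separabilitytriple}, both lie in $\overline{\mathfrak M}_\nu\subseteq M(B)$. In the form spelled out in that remark, condition~(2) gives
$$
\nu(e_\omega b)=\omega\bigl(R(\sigma^\nu_{i/2}(b))\bigr)=\nu(e'_\omega b),\qquad b\in B_0,
$$
so $x:=e_\omega-e'_\omega\in\overline{\mathfrak M}_\nu$ satisfies $\nu(xb)=0$ for every $b\in B_0$.

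The core step is the claim that any $x\in\overline{\mathfrak M}_\nu$ with $\nu(xb)=0$ on $\mathcal T_\nu\subseteq B_0$ must vanish. Given $c_1,c_2\in\mathcal T_\nu$ we have $c_1^*c_2\in\mathcal T_\nu$, and since $x^*\in\overline{\mathfrak M}_\nu\subseteq\overline{\mathfrak N}_\nu$ while $\mathfrak N_\nu$ is a left ideal in $M(B)$, the product $c_1x^*$ lies in $\overline{\mathfrak N}_\nu\cap B=\mathfrak N_\nu$. Hence
$$
0=\nu(xc_1^*c_2)=\bigl\langle\Lambda_\nu(c_2),\,\Lambda_\nu(c_1 x^*)\bigr\rangle.
$$
Letting $c_2$ range over $\mathcal T_\nu$ and using density of $\Lambda_\nu(\mathcal T_\nu)$ in $\mathcal H_\nu$ forces $\Lambda_\nu(c_1 x^*)=0$; faithfulness of $\nu$ then gives $c_1 x^*=0$ for every $c_1\in\mathcal T_\nu$. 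Since $\mathcal T_\nu$ is norm dense in $B$, we conclude $bx^*=0$ for all $b\in B$, and an approximate identity argument in the strict topology of $M(B)$ yields $x^*=0$, hence $x=0$.

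Applying the claim to $x=e_\omega-e'_\omega$ for each $\omega\in C^*$ gives $(\operatorname{id}\otimes\omega)(E-E')=0$ for all $\omega\in C^*$. Because functionals of the form $\theta\otimes\omega$, $\theta\in B^*$, $\omega\in C^*$, separate points of $M(B\otimes C)$, this forces $E=E'$. The main obstacle is the core claim: the identity $\nu(xc_1^*c_2)=\langle\Lambda_\nu(c_2),\Lambda_\nu(c_1 x^*)\rangle$ hinges on $c_1 x^*$ actually lying in $\mathfrak N_\nu$, which in turn rests on the multiplier-algebra lift of the left-ideal property of $\mathfrak N_\nu$ (equivalently, the extension of Proposition~\ref{B0}). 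The rest---slice-map identities and separation on $M(B\otimes C)$---is routine packaging.
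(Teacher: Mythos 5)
Your proof is correct and follows the same strategy as the paper's: the defining relations force $\bigl(\nu(\cdot\,b)\otimes\omega\bigr)(E)$ to agree for any two candidate idempotents, and one then argues that this slice data determines $E$. The only difference is that where the paper simply asserts that the functionals $\nu(\cdot\,b)$, $b\in B_0$, are dense in $B^*$, you prove the needed separation property directly (via the GNS construction, density of $\Lambda_\nu({\mathcal T}_\nu)$, and faithfulness of $\nu$), which is a welcome and arguably necessary amount of extra care.
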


\begin{proof}
From Definition~\ref{separabilitytriple} and Remark following it, we know that 
\begin{equation}\label{(Edefeq)}
\bigl(\,\nu(\cdot\,b)\otimes\omega\bigr)(E)=\omega\bigl((R\circ\sigma^{\nu}_{i/2})(b)\bigr),
\quad\forall b\in B_0,\ \forall\omega\in C^*.
\end{equation}
Since $B_0$ is dense in $B$, we know $\bigl\{\nu(\,\cdot\,b):b\in B_0\bigr\}$ is dense in $B^*$. 
This shows that $E$ is completely determined by the data $(B,\nu,R)$.
\end{proof}

As we regard a separability triple $(E,B,\nu)$ to implicitly fix the $C^*$-algebra $C$ and the 
anti-isomorphism $R$ (see an earlier remark), we can see from the above result that the $E$ 
is more or less characterized by the pair $(B,\nu)$.  However, the element $E\in M(B\otimes C)$ 
that is determined by $(B,\nu)$ via equation~\eqref{(Edefeq)} does not necessarily have to be 
a projection.  In other words, the existence of the separability idempotent $E$ is really 
a condition on the pair $(B,\nu)$.  In Section 6 below, we will discuss a little about the nature 
of the $C^*$-algebra $B$ that could allow this to happen.

Let us now gather some basic properties of the separability triple $(E,B,\nu)$ and the 
separability idempotent $E\in M(B\otimes C)$.  For convenience, let us write $\gamma_B(b)
:=(R\circ\sigma^{\nu}_{i/2})(b)$, for $b\in B_0$, which determines a map $\gamma_B:B_0\to C$. 
Since $\sigma^{\nu}_{i/2}$ is a closed map (see comment given in Definition~\ref{KMSweight}), 
we see that $\gamma_B$ is also a closed map.

\begin{prop}
Considered as a map from $B$ to $C$, the aforementioned map $\gamma_B$ is 
a closed, densely-defined, injective map, whose range is dense in $C$.  Moreover, 
for $b\in B_0$, we have:
$(\nu\otimes\operatorname{id})\bigl(E(b\otimes1)\bigr)=\gamma_B(b)$.
\end{prop}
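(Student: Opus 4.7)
The plan is to unpack the definition $\gamma_B = R\circ\sigma^{\nu}_{i/2}$ and read off each property from the corresponding property of the two factors. The last assertion $(\nu\otimes\operatorname{id})\bigl(E(b\otimes1)\bigr)=\gamma_B(b)$ is simply condition~(2) of Definition~\ref{separabilitytriple} rewritten with the new notation, so nothing needs to be proved there.

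For the four structural properties of $\gamma_B$, I would argue as follows. \emph{Closedness} was already observed in the text: $\sigma^{\nu}_{i/2}$ is closed as an unbounded operator on $B$ by the remark in Definition~\ref{KMSweight}, and $R:B\to C$ is norm-continuous (in fact isometric) since it is a $*$-anti-isomorphism of $C^*$-algebras; hence $\gamma_B = R\circ\sigma^{\nu}_{i/2}$ is closed. \emph{Dense domain:} the domain of $\gamma_B$ is $B_0={\mathcal D}(\sigma^{\nu}_{i/2})$, which contains the Tomita $*$-algebra ${\mathcal T}_{\nu}$; as noted at the end of Section~1, ${\mathcal T}_{\nu}$ is norm-dense in $B$. \emph{Injectivity:} $\sigma^{\nu}_{i/2}$ is injective because $\sigma^{\nu}_{-i/2}$ is a left inverse on ${\mathcal D}(\sigma^{\nu}_{i/2})$ (the one-parameter group law extended to analytic elements gives $\sigma^{\nu}_{-i/2}\circ\sigma^{\nu}_{i/2}=\operatorname{Id}$ on $B_0$), and $R$ is bijective; composition of injections is injective.

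The only step with any real content is \emph{dense range}. Here I would use that the Tomita $*$-algebra ${\mathcal T}_{\nu}$ is invariant under every $\sigma^{\nu}_z$: for each $x\in{\mathcal T}_{\nu}$ we have $\sigma^{\nu}_{-i/2}(x)\in{\mathcal T}_{\nu}\subseteq B_0$, and the analytic group law then yields
\[
\sigma^{\nu}_{i/2}\bigl(\sigma^{\nu}_{-i/2}(x)\bigr)=x.
\]
Thus ${\mathcal T}_{\nu}\subseteq\sigma^{\nu}_{i/2}(B_0)$, so $\sigma^{\nu}_{i/2}(B_0)$ is norm-dense in $B$. Applying the isometric bijection $R$ gives that $\gamma_B(B_0)=R\bigl(\sigma^{\nu}_{i/2}(B_0)\bigr)$ is norm-dense in $C$.

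None of these steps looks like a serious obstacle: everything is extracted from standard properties of KMS weights and their Tomita $*$-algebras (recorded in Section~1), together with the fact that $R$ is a $*$-anti-isomorphism. The only point one has to be careful about is using analyticity correctly when verifying injectivity and dense range—i.e., invoking the invariance of ${\mathcal T}_{\nu}$ under the complex-analytic extensions $\sigma^{\nu}_z$ rather than only under the real one-parameter group $\sigma^{\nu}_t$.
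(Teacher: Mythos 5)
Your proposal is correct and follows essentially the same route as the paper: closedness from the closedness of $\sigma^{\nu}_{i/2}$ composed with the (isometric) anti-isomorphism $R$, dense domain from $B_0\supseteq{\mathcal T}_{\nu}$, injectivity from invertibility of the analytic generator together with bijectivity of $R$, dense range from the invariance of the Tomita algebra ${\mathcal T}_{\nu}$ under $\sigma^{\nu}_{z}$ (the paper states this as $\sigma^{\nu}_{i/2}({\mathcal T}_{\nu})={\mathcal T}_{\nu}$, which is the same containment you derive), and the final identity as a restatement of Definition~2.3\,(2). No substantive differences.
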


\begin{proof}
Since ${\mathcal D}(\gamma_B)=B_0$ is dense in $B$, it is clear that $\gamma_B$ is 
densely-defined.  We have already seen that it is closed.  The injectivity of $\gamma_B$ is 
easy to see, because $R$ is an anti-isomorphism and $\sigma^{\nu}_t$ is an automorphism 
for all $t$. 

By the property of the Tomita algebra, we know that ${\mathcal T}_{\nu}\subseteq B_0$ 
and $\sigma^{\tilde{\nu}}_{i/2}({\mathcal T}_{\tilde{\nu}})={\mathcal T}_{\tilde{\nu}}$.  So we have: 
$\sigma^{\nu}_{i/2}({\mathcal T}_{\nu})=\sigma^{\nu}_{i/2}({\mathcal T}_{\tilde{\nu}}\cap B)
={\mathcal T}_{\tilde{\nu}}\cap B={\mathcal T}_{\nu}$, which is dense in $B$.  In addition, 
since $R$ is an anti-isomorphism between $B$ and $C$, we see that $\operatorname{Ran}
(\gamma_B)=R\bigl(\sigma^{\nu}_{i/2}(B_0)\bigr)\supseteq R({\mathcal T}_{\nu})$, which is 
dense in $C$.

The last result is just re-writing (2) of Definition~\ref{separabilitytriple}.
\end{proof}

For convenience, write $C_0=\operatorname{Ran}(\gamma_B)$, which is dense in $C$. 
By the above proposition, we can consider $\gamma_B^{-1}:C_0\to B$.  Viewed as a map from $C$ 
to $B$, it is clear that the map $\gamma_B^{-1}$ is also a closed, densely-defined injective map, 
having a dense range.  In fact, we will have: $\gamma_B^{-1}=\sigma^{\nu}_{-i/2}\circ R^{-1}$, and 
${\mathcal D}(\gamma_B^{-1})=\operatorname{Ran}(\gamma_B)=C_0$, while $\operatorname{Ran}
(\gamma_B^{-1})={\mathcal D}(\gamma_B)=B_0$.

\begin{prop}\label{gammagamma-1}
\begin{enumerate}
  \item $B_0={\mathcal D}(\gamma_B)$ is closed under multiplication, and $\gamma_B$ is an 
anti-homomorphism.
  \item $C_0={\mathcal D}(\gamma_B^{-1})$ is closed under multiplication, and $\gamma_B^{-1}$ 
is an anti-homomorphism.
\end{enumerate}
\end{prop}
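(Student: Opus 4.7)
The plan is to reduce both statements to the standard fact that the analytic continuation $\sigma^{\nu}_{i/2}$ of the modular group is multiplicative on its domain. Recall that $\gamma_B = R\circ\sigma^{\nu}_{i/2}$, where $R$ is a ${}^*$-anti-isomorphism, and ${\mathcal D}(\gamma_B)=B_0={\mathcal D}(\sigma^{\nu}_{i/2})$. So part (1) follows once we know that $B_0$ is a subalgebra of $B$ and that
\[
\sigma^{\nu}_{i/2}(ab)=\sigma^{\nu}_{i/2}(a)\,\sigma^{\nu}_{i/2}(b),\qquad a,b\in B_0.
\]
This is the key lemma, and it is standard: if $a,b\in B_0$, the functions $z\mapsto\sigma^{\nu}_z(a)$ and $z\mapsto\sigma^{\nu}_z(b)$ extend to bounded strongly continuous, strip-analytic $B$-valued functions on $\{0\le{\rm Im}(z)\le 1/2\}$, so their product is strip-analytic as well; since for real $t$ this product equals $\sigma^{\nu}_t(ab)$, uniqueness of analytic continuation yields the identity on the strip, in particular at $z=i/2$. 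Hence $ab\in B_0$ and the displayed multiplicativity holds. Applying the anti-multiplicative $R$ then gives
\[
\gamma_B(ab)=R\bigl(\sigma^{\nu}_{i/2}(a)\sigma^{\nu}_{i/2}(b)\bigr)=R\bigl(\sigma^{\nu}_{i/2}(b)\bigr)\,R\bigl(\sigma^{\nu}_{i/2}(a)\bigr)=\gamma_B(b)\gamma_B(a),
\]
which is (1).

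For (2), I would use the already-noted description $\gamma_B^{-1}=\sigma^{\nu}_{-i/2}\circ R^{-1}$, with ${\mathcal D}(\gamma_B^{-1})=C_0$. By exactly the same analytic-continuation argument applied to the group element $-i/2$, the domain ${\mathcal D}(\sigma^{\nu}_{-i/2})$ is a subalgebra of $B$ on which $\sigma^{\nu}_{-i/2}$ is multiplicative. Given $c_1,c_2\in C_0$, write $c_i=\gamma_B(b_i)$ for unique $b_i\in B_0$; then, as in (1),
\[
c_1c_2=R\bigl(\sigma^{\nu}_{i/2}(b_1)\bigr)R\bigl(\sigma^{\nu}_{i/2}(b_2)\bigr)=R\bigl(\sigma^{\nu}_{i/2}(b_2b_1)\bigr)=\gamma_B(b_2b_1)\in C_0,
\]
so $C_0$ is closed under multiplication, and
\[
\gamma_B^{-1}(c_1c_2)=b_2b_1=\gamma_B^{-1}(c_2)\,\gamma_B^{-1}(c_1).
\]
Alternatively, one can deduce this directly by applying $R^{-1}$ (also an anti-isomorphism) to the multiplicativity of $\sigma^{\nu}_{-i/2}$.

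The only real obstacle is the multiplicativity of $\sigma^{\nu}_{i/2}$ on its domain. This is a textbook result for one-parameter groups of ${}^*$-automorphisms (see, e.g., the modular theory references already cited in the paper), and the authors have in fact invoked similar analytic-generator facts in Definition~\ref{KMSweight} and in the discussion of the Tomita algebra; so in the write-up I would simply cite it rather than redo the analytic-continuation argument in detail. Everything else is bookkeeping with the anti-isomorphism $R$ and the bijection between $B_0$ and $C_0$ induced by $\gamma_B$.
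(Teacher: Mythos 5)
Your proposal is correct and follows essentially the same route as the paper: both rest on the fact that ${\mathcal D}(\sigma^{\nu}_{i/2})$ is a subalgebra on which the analytic generator is multiplicative, combined with the anti-multiplicativity of $R$ (the paper simply states that analytic elements multiply and that $\sigma^{\nu}_{i/2}$ is ``an automorphism,'' whereas you spell out the strip-analyticity argument behind that claim). No further changes are needed.
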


\begin{proof}
Suppose $b,b'\in B_0={\mathcal D}(\sigma^{\nu}_{i/2})$.  Being analytic elements, we have 
$bb'\in B_0$.  Meanwhile, since $\gamma_B=R\circ\sigma^{\nu}_{i/2}$, with $\sigma^{\nu}_{i/2}$ 
being an automorphism and $R$ an anti-isomorphism, we have $\gamma_B(bb')=\gamma_B(b')
\gamma_B(b)$. Similarly, we have: $\gamma_B^{-1}(cc')=\gamma_B^{-1}(c')\gamma_B^{-1}(c)$ 
for $c,c'\in C_0$.
\end{proof}

Here is a property that relates the $C^*$-algebras $B$ and $C$, at the level of their dense 
subspaces:
\begin{prop}\label{sepidgamma}
\begin{enumerate}
  \item For $b\in B_0$, we have: $E(b\otimes1)=E\bigl(1\otimes\gamma_B(b)\bigr)$.
  \item For $c\in C_0$, we have: $E(1\otimes c)=E\bigl(\gamma_B^{-1}(c)\otimes1\bigr)$.
\end{enumerate}
\end{prop}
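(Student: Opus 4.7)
The plan is to prove part~(1) directly, and then observe that~(2) is merely a rewriting. For~(2), any $c\in C_0=\operatorname{Ran}(\gamma_B)$ has the form $c=\gamma_B(b)$ with $b=\gamma_B^{-1}(c)\in B_0$, and substituting this into~(1) gives exactly the statement in~(2). So the real task is~(1). Both $E(b\otimes 1)$ and $E(1\otimes\gamma_B(b))$ live in $M(B\otimes C)$, and to prove they agree it suffices to check that they give the same result after right-multiplication by any elementary tensor $p\otimes q\in B\otimes C$; by norm-continuity, a further density argument will allow me to assume $p\in B_0$.

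First I would record a mild extension of condition~(2) of Definition~\ref{separabilitytriple}: for $x\in B_0$ and $c\in C$, one has $(\nu\otimes\operatorname{id})\bigl(E(x\otimes c)\bigr)=\gamma_B(x)c$, obtained from $E(x\otimes c)=E(x\otimes 1)(1\otimes c)$ by pulling $1\otimes c$ through the slice. Next, for $p,\tilde b\in B_0$, $q\in C$ and $\psi\in C^*$, I would evaluate the slice $(\nu(\cdot\,\tilde b)\otimes\psi)$ on the two elements $E(bp\otimes q)$ and $E(p\otimes\gamma_B(b)q)$. Using the identity $(\nu(\cdot\,\tilde b)\otimes\psi)(Y)=(\nu\otimes\psi)\bigl(Y(\tilde b\otimes 1)\bigr)$, together with the fact that $B_0$ is closed under multiplication and the anti-homomorphism property $\gamma_B(xx')=\gamma_B(x')\gamma_B(x)$ from Proposition~\ref{gammagamma-1}, both slices reduce to $\psi\bigl(\gamma_B(\tilde b)\gamma_B(p)\gamma_B(b)q\bigr)$ and therefore coincide.

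Since $\{\nu(\cdot\,\tilde b):\tilde b\in B_0\}$ is norm-dense in $B^*$ (as noted in the proof of Proposition~\ref{uniqueE}) and tensor-product functionals $\phi\otimes\psi$ separate points of $B\otimes C$, this yields $E(bp\otimes q)=E(p\otimes\gamma_B(b)q)$ in $B\otimes C$ for every $p\in B_0$ and $q\in C$. A routine norm-continuity argument, using that left multiplication by $E$ is bounded and that $B_0$ is norm-dense in $B$, then extends the equality to every $p\in B$. This shows $E(b\otimes 1)\cdot y=E(1\otimes\gamma_B(b))\cdot y$ for all elementary tensors $y=p\otimes q$, hence by linearity and density for all $y\in B\otimes C$, which gives the desired equality in $M(B\otimes C)$.

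The main obstacle is really just bookkeeping: at each step one must ensure that products such as $bp\tilde b$ and $p\tilde b$ actually lie in $B_0$, so that the defining equation of the separability triple and the anti-homomorphism identity can be applied. Once one commits to working initially with $p,\tilde b\in B_0$ and extending to general $p\in B$ only at the end, the computation proceeds cleanly and the two slices are forced to match.
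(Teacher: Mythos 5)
Your proof is correct and follows essentially the same route as the paper: both arguments pair against the functionals $\nu(\,\cdot\,\tilde b)$ for $\tilde b\in B_0$, reduce everything via condition (2) of Definition \ref{separabilitytriple} and the anti-multiplicativity $\gamma_B(xx')=\gamma_B(x')\gamma_B(x)$ to a common expression, and conclude by density of $B_0$ and faithfulness of $\nu$; part (2) is handled identically. The only difference is that you first right-multiply by elementary tensors $p\otimes q$ before slicing, which makes the multiplier-level equality slightly more explicit but does not change the substance of the argument.
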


\begin{proof}
(1). For $b,b'\in B_0$, 
\begin{align}
\bigl(\nu(\,\cdot\,b')\otimes\operatorname{id}\bigr)\bigl(E(b\otimes1)\bigr)&=
(\nu\otimes\operatorname{id})\bigl(E(bb'\otimes1)\bigr)
=\gamma_B(bb')=\gamma_B(b')\gamma_B(b) \notag \\
&=(\nu\otimes\operatorname{id})\bigl(E(b'\otimes1)\bigr)\gamma_B(b)
=(\nu\otimes\operatorname{id})\bigl(E(b'\otimes\gamma_B(b))\bigr)  \notag \\
&=(\nu\otimes\operatorname{id})\bigl(E(1\otimes\gamma_B(b))(b'\otimes1)\bigr) \notag \\
&=\bigl(\nu(\,\cdot\,b')\otimes\operatorname{id}\bigr)\bigl(E(1\otimes\gamma_B(b))\bigr).  \notag 
\end{align}
In the second equation, we are using (2) of Definition~\ref{separabilitytriple}.  The third 
equation holds because $\gamma_B$ is an anti-homomorphism.  Since $\nu$ is faithful, 
and since the result is true for all $b'\in B_0$, which is dense in $B$, we conclude that 
$E(b\otimes1)=E\bigl(1\otimes\gamma_B(b)\bigr)$.

(2). This is an immediate consequence of (1), because $C_0=\operatorname{Ran}(\gamma_B)$ 
and $B_0=\operatorname{Ran}(\gamma_B^{-1})$.
\end{proof}

From $(\nu\otimes\operatorname{id})\bigl(E(b\otimes1)\bigr)=R\bigl(\sigma^{\nu}_{i/2}(b)\bigr)$, 
$b\in B_0$, take the adjoint.  Knowing that $R$ is a ${}^*$-anti-isomorphism, we have:
$$
(\nu\otimes\operatorname{id})\bigl((b^*\otimes1)E\bigr)=\bigl[R(\sigma^{\nu}_{i/2}(b))\bigr]^*
=R\bigl([\sigma^{\nu}_{i/2}(b)]^*\bigr)=R\bigl(\sigma^{\nu}_{-i/2}(b^*)\bigr).
$$
This observation means that for $b\in B_0^*={\mathcal D}(\sigma^{\nu}_{-i/2})$, which is also 
dense in $B$, the expression $(\nu\otimes\operatorname{id})\bigl((b\otimes1)E\bigr)$ is valid, 
and $(\nu\otimes\operatorname{id})\bigl((b\otimes1)E\bigr)=(R\circ\sigma^{\nu}_{-i/2})(b)$. 
Or, put another way, we have: 
$$
\nu\bigl(b\,(\operatorname{id}\otimes\omega)(E)\bigr)
=\omega\bigr((R\circ\sigma^{\nu}_{-i/2})(b)\bigr),\quad{\text { for $\omega\in C^*$, 
$b\in{\mathcal D}(\sigma^{\nu}_{-i/2})$.}}
$$

So, by the same argument as in the case of the map $\gamma_B$, it is clear that the map 
$b\mapsto(R\circ\sigma^{\nu}_{-i/2})(b)$ is closed and densely-defined on $B$, injective, 
and has a dense range in $C$.  Also, its inverse map $c\to(\sigma^{\nu}_{i/2}\circ R)(c)$ is 
closed and densely-defined in $C$, injective, and has a dense range in $B$.  So let us define 
the maps $\gamma_C:C\to B$ and $\gamma_C^{-1}:B\to C$ as follows:

\begin{prop}\label{sepidgamma'}
Write $\gamma_C:=\sigma^{\nu}_{i/2}\circ R^{-1}$ and $\gamma_C^{-1}:=R\circ\sigma^{\nu}_{-i/2}$. 
Then: 
\begin{enumerate}
  \item The map $\gamma_C$ is closed and densely-defined on $C$, injective, and has a dense 
range in $B$.
  \item The map $\gamma_C^{-1}$ is closed and densely-defined on $B$, injective, and has a dense 
range in $C$.
  \item Both maps $\gamma_C$ and $\gamma_C^{-1}$ are anti-homomorphisms:  That is, 
$\gamma_C(cc')=\gamma_C(c')\gamma_C(c)$, $c,c'\in{\mathcal D}(\gamma_C)$, and 
$\gamma_C^{-1}(bb')=\gamma_C^{-1}(b')\gamma_C^{-1}(b)$, $b,b'\in{\mathcal D}(\gamma_C^{-1})$.
  \item For $c\in{\mathcal D}(\gamma_C)$, we have: 
$(1\otimes c)E=\bigl(\gamma_C(c)\otimes1\bigr)E$.
  \item For $b\in{\mathcal D}(\gamma_C^{-1})$, we have: 
$(b\otimes1)E=\bigl(1\otimes\gamma_C^{-1}(b)\bigr)E$.
\end{enumerate}
\end{prop}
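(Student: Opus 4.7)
The plan is to recycle the work that has already been done before the proposition is stated, plus the arguments of Proposition~\ref{gammagamma-1} and Proposition~\ref{sepidgamma}, with minor bookkeeping about adjoints and domains.

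For parts (1) and (2), I would simply observe that the paragraph immediately preceding the proposition already establishes everything: taking adjoints of the defining identity $(\nu\otimes\operatorname{id})\bigl(E(b\otimes1)\bigr)=R(\sigma^\nu_{i/2}(b))$ and using that $R$ is a ${}^*$-anti-isomorphism produced the formula $(\nu\otimes\operatorname{id})\bigl((b\otimes1)E\bigr)=(R\circ\sigma^\nu_{-i/2})(b)=\gamma_C^{-1}(b)$ for $b\in B_0^*=\mathcal{D}(\sigma^\nu_{-i/2})$. Closedness is inherited from the closedness of $\sigma^\nu_{-i/2}$ (and of $\sigma^\nu_{i/2}$), density of the domain follows from density of $B_0^*$, injectivity is immediate since $R$ and $\sigma^\nu_{-i/2}$ are bijections on their respective domains, and denseness of the range follows (as in the proof of the corresponding statement for $\gamma_B$) from $R(\sigma^\nu_{-i/2}(\mathcal{T}_\nu))=R(\mathcal{T}_\nu)$, which is dense in $C$. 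Passing to the inverse gives (1).

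For (3), I would repeat verbatim the proof of Proposition~\ref{gammagamma-1}: the domain $\mathcal{D}(\sigma^\nu_{\pm i/2})$ is a subalgebra of $B$ on which $\sigma^\nu_{\pm i/2}$ acts multiplicatively (a standard fact for analytic generators of a one-parameter automorphism group; if one wants to be extra careful one can first verify the identity on the Tomita algebra $\mathcal{T}_\nu$ and then extend by closedness). Composing with the anti-isomorphism $R$ or $R^{-1}$ reverses products, producing the anti-homomorphism identities for $\gamma_C$ and $\gamma_C^{-1}$.

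For (4) and (5), I would take adjoints of the two identities already proved in Proposition~\ref{sepidgamma}. Starting from $E(1\otimes c)=E\bigl(\gamma_B^{-1}(c)\otimes1\bigr)$ with $c\in C_0$, self-adjointness of $E$ yields $(1\otimes c^*)E=\bigl(\gamma_B^{-1}(c)^*\otimes1\bigr)E$. A short identification, using that $R$ is a ${}^*$-map and $\sigma^\nu_{-i/2}(x)^*=\sigma^\nu_{i/2}(x^*)$, gives
\[
\gamma_B^{-1}(c)^*=\bigl[\sigma^\nu_{-i/2}(R^{-1}(c))\bigr]^*=\sigma^\nu_{i/2}\bigl(R^{-1}(c)^*\bigr)=\sigma^\nu_{i/2}\bigl(R^{-1}(c^*)\bigr)=\gamma_C(c^*),
\]
and since $c\mapsto c^*$ is a bijection between $C_0=\mathcal{D}(\gamma_B^{-1})$ and $\mathcal{D}(\gamma_C)$, relabeling $c^*$ as $c$ gives (4). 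Statement (5) comes from the same manipulation applied to $E(b\otimes1)=E(1\otimes\gamma_B(b))$.

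I don't foresee any real obstacle; the only thing to be careful about is the domain bookkeeping in the adjoint step (checking that $c\in C_0\iff c^*\in\mathcal{D}(\gamma_C)$, and the corresponding statement on the $B$ side), but this is just the observation that $\mathcal{D}(\sigma^\nu_{-i/2})^*=\mathcal{D}(\sigma^\nu_{i/2})$ together with $R(b^*)=R(b)^*$. One could alternatively avoid the adjoint trick altogether and prove (4), (5) directly by the computation used in the proof of Proposition~\ref{sepidgamma}, slicing both sides with $\operatorname{id}\otimes\nu(\,\cdot\,c')$ or $\nu(\,\cdot\,b')\otimes\operatorname{id}$ and invoking the anti-homomorphism property from (3), but the adjoint route is shorter.
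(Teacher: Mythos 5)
Your treatment of (1)--(3) coincides with the paper's: the paper likewise dismisses (1) and (2) as already observed in the paragraph preceding the proposition, and proves (3) exactly as in Proposition~\ref{gammagamma-1}. For (4) and (5), however, you take a genuinely different route. The paper reruns the slicing computation of Proposition~\ref{sepidgamma} (pairing with functionals such as $\nu(b'\,\cdot\,)\otimes\operatorname{id}$ for $b'\in{\mathcal D}(\sigma^{\nu}_{-i/2})$, using the anti-homomorphism property of $\gamma_C$ and $\gamma_C^{-1}$ from part (3), and then invoking faithfulness of $\nu$), which is why it remarks that (3) is needed in the proof of (4) and (5). You instead obtain (4) and (5) by taking adjoints of the two identities of Proposition~\ref{sepidgamma}, using $E^*=E$ together with the identifications $\gamma_B(b)^*=\gamma_C^{-1}(b^*)$ and $\gamma_B^{-1}(c)^*=\gamma_C(c^*)$, which follow from $\sigma^{\nu}_{z}(x)^*=\sigma^{\nu}_{\bar z}(x^*)$ and $R(x^*)=R(x)^*$; the domain bookkeeping you flag, namely $\bigl({\mathcal D}(\sigma^{\nu}_{-i/2})\bigr)^*={\mathcal D}(\sigma^{\nu}_{i/2})$ transported by $R$, is exactly what is needed and is correct. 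This is shorter, bypasses any use of part (3) in proving (4)--(5), and is not circular, since it relies only on Proposition~\ref{sepidgamma}; it is in fact the same adjoint manipulation the paper deploys one proposition later, in Proposition~\ref{gamma*}, to relate $\gamma_B$ and $\gamma_C$ --- there the paper takes adjoints and compares with the already-established statement (5), whereas you use the adjoint step to establish (4) and (5) in the first place. Both arguments are sound.
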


\begin{proof}
(1) and (2) were already observed in the previous paragraph.  Proof of (3) is done exactly as in 
Proposition~\ref{gammagamma-1}.  The anti-homomorphism property is needed in the proof of the results 
(4) and (5), which is done using a similar argument as in Proposition~\ref{sepidgamma}.
\end{proof}

\begin{rem}
The results of Proposition~\ref{sepidgamma} and Proposition~\ref{sepidgamma'} suggest us that $E$ 
behaves very much like a separability idempotent in the purely algebraic setting \cite{VDsepid}, 
justifying our definition given in Definition~\ref{separabilitytriple}. Our maps $\gamma_B$ and 
$\gamma_C$ correspond to the maps $S_B$ and $S_C$ in \cite{VDsepid}.  Indeed, our 
Proposition~\ref{sepidgamma}\,(1) and Proposition~\ref{sepidgamma'}\,(4) are exactly the 
defining axioms in the algebraic case (see Definition~1.4 of \cite{VDsepid}), for $S_B$ and $S_C$. 
The only difference is that in the algebraic case, the maps $S_B$ and $S_C$ are bijections, while 
our $\gamma_B$, $\gamma_C$ maps are densely-defined with dense ranges.
\end{rem}

Compared to the purely algebraic setting, we have here the ${}^*$-structure.  The next result 
is about the relationship between the ${}^*$-structure and the maps $\gamma_B$, $\gamma_C$. 
Observe that in general, the maps $\gamma_B$, $\gamma_C$ are not necessarily ${}^*$-maps.

\begin{prop}\label{gamma*}
\begin{enumerate}
  \item For $b\in{\mathcal D}(\gamma_B)$, we have: $\gamma_B(b)^*\in{\mathcal D}(\gamma_C)$, 
and $\gamma_C\bigl(\gamma_B(b)^*\bigr)^*=b$.
  \item Similarly for $c\in{\mathcal D}(\gamma_C)$, we have $\gamma_B\bigl(\gamma_C(c)^*\bigr)^*
  =c$.
\end{enumerate}
\end{prop}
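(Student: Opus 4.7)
The plan is to prove this by direct unwinding of the definitions $\gamma_B = R \circ \sigma^{\nu}_{i/2}$ and $\gamma_C = \sigma^{\nu}_{i/2}\circ R^{-1}$, with part~(2) following by essentially the same computation with the roles of $B$ and $C$ interchanged. There is no need to appeal to the idempotent $E$ itself in this proof; only the structural definitions of the two maps are used.

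The key ingredient will be the standard compatibility of an analytic extension with the ${}^*$-operation: for any ${}^*$-preserving norm-continuous one-parameter group and any $x$ in $\mathcal{D}(\sigma_z)$, one has $x^{*} \in \mathcal{D}(\sigma_{\bar z})$ with $\sigma_z(x)^{*} = \sigma_{-\bar z}(x^{*})$. Specializing to $z = i/2$ (where $-\bar z = i/2$ becomes $-(-i/2) = i/2$... wait, $\overline{i/2} = -i/2$, so $-\overline{i/2} = i/2$... no: $\sigma_z(x)^{*} = \sigma_{\bar z}(x^{*})$ is the standard formula — let me restate it correctly), the formula reads $\sigma^{\nu}_{i/2}(b)^{*} = \sigma^{\nu}_{-i/2}(b^{*})$. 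Combined with the fact that $R$ (and hence $R^{-1}$) is a ${}^*$-anti-isomorphism, I can convert adjoints of values of $\gamma_B$ into applications of $\sigma^{\nu}_{-i/2}\circ(\text{flip of $R$})$, which is exactly the shape needed to feed into $\gamma_C$.

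For (1), starting from $b \in \mathcal{D}(\gamma_B) = B_0$, I will write $\gamma_B(b)^{*} = R\bigl(\sigma^{\nu}_{i/2}(b)\bigr)^{*} = R\bigl(\sigma^{\nu}_{-i/2}(b^{*})\bigr)$. To show $\gamma_B(b)^{*} \in \mathcal{D}(\gamma_C)$, I need $R^{-1}(\gamma_B(b)^{*}) = \sigma^{\nu}_{-i/2}(b^{*})$ to lie in $\mathcal{D}(\sigma^{\nu}_{i/2})$; this follows at once from the semigroup property, since $\sigma^{\nu}_{i/2}\bigl(\sigma^{\nu}_{-i/2}(b^{*})\bigr) = b^{*}$ is defined (whenever $b \in B_0$, we have $b^{*} \in \mathcal{D}(\sigma^{\nu}_{-i/2})$ by the ${}^*$-identity above). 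Composing, $\gamma_C(\gamma_B(b)^{*}) = \sigma^{\nu}_{i/2}\bigl(\sigma^{\nu}_{-i/2}(b^{*})\bigr) = b^{*}$, and taking adjoint gives the claim. For (2), the mirror computation with $c \in \mathcal{D}(\gamma_C)$ gives $\gamma_C(c)^{*} = \sigma^{\nu}_{-i/2}\bigl(R^{-1}(c^{*})\bigr)$, which after applying $R\circ\sigma^{\nu}_{i/2}$ collapses to $c^{*}$.

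There is no serious obstacle; the only point to handle with care is the ${}^*$-compatibility formula for $\sigma^{\nu}_z$ on its analytic domain and the bookkeeping of domains under composition. Both are standard facts from the Tomita–Takesaki / KMS framework recalled in Section~1, so the proof should be a clean two-line computation for each part.
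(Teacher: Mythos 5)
Your proof is correct, but it takes a genuinely different route from the paper's. You work entirely from the structural formulas $\gamma_B=R\circ\sigma^{\nu}_{i/2}$ and $\gamma_C=\sigma^{\nu}_{i/2}\circ R^{-1}$, using the standard ${}^*$-compatibility $\sigma^{\nu}_{z}(x)^*=\sigma^{\nu}_{\bar z}(x^*)$ (so $\sigma^{\nu}_{i/2}(b)^*=\sigma^{\nu}_{-i/2}(b^*)$ — you do land on the right version after your false start) together with the fact that $\sigma^{\nu}_{i/2}$ and $\sigma^{\nu}_{-i/2}$ are mutually inverse closed maps, so that $\operatorname{Ran}(\sigma^{\nu}_{-i/2})={\mathcal D}(\sigma^{\nu}_{i/2})$; this makes the domain statement and the identity $\gamma_C\bigl(\gamma_B(b)^*\bigr)=b^*$ essentially a tautology of compositions, with no reference to $E$. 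The paper instead argues through the idempotent: it takes adjoints in $E(b\otimes1)=E\bigl(1\otimes\gamma_B(b)\bigr)$ to get $(b^*\otimes1)E=\bigl(1\otimes\gamma_B(b)^*\bigr)E$, notes $b^*\in{\mathcal D}(\gamma_C^{-1})$, and matches this against the relations of Proposition~\ref{sepidgamma'}. Your approach is shorter and more elementary, and the two ingredients you invoke are standard (and are in fact already used without comment in the paper's derivation of $\gamma_C^{-1}=R\circ\sigma^{\nu}_{-i/2}$ just before Proposition~\ref{sepidgamma'}); what the paper's route buys is the observation that the ${}^*$-relation between $\gamma_B$ and $\gamma_C$ is forced by the multiplier identities involving $E$ alone, which is the formulation that survives in the purely algebraic setting where $S_B$ and $S_C$ are defined via $E$ rather than via $R$ and a modular group. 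Either argument is acceptable here.
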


\begin{proof}
(1). If $b\in{\mathcal D}(\gamma)={\mathcal D}(\sigma^{\nu}_{i/2})$, then we know from 
Proposition~\ref{sepidgamma} that $E(b\otimes1)=E\bigl(1\otimes\gamma_B(b)\bigr)$.  Taking 
adjoints, we have: $(b^*\otimes1)E=\bigl(1\otimes\gamma_B(b)^*\bigr)E$. 

Note that $b^*\in{\mathcal D}(\sigma^{\nu}_{-i/2})=D(\gamma_C^{-1})$. 
Comparing with Proposition~\ref{sepidgamma'}, it follows that $\gamma_B(b)^*
\in{\mathcal D}(\gamma_C)$ and $\gamma_C\bigl(\gamma_B(b)^*\bigr)=b^*$.  This is equivalent 
to saying that $\gamma_C\bigl(\gamma_B(b)^*\bigr)^*=b$.

(2). Similar argument will show that: $\gamma_B\bigl(\gamma_C(c)^*\bigr)^*=c$, 
$c\in{\mathcal D}(\gamma_C)$.
\end{proof}

Notice the properties of the maps $\gamma_B$, $\gamma_C$ obtained so far, namely, being 
closed and densely-defined anti-homomorphisms satisfying the result like Proposition~\ref{gamma*} 
for the ${}^*$-structure.  One may observe that the behavior of the maps $\gamma_B$, $\gamma_C$ 
resemble that of the antipode map for a locally compact quantum group.  Indeed, in a certain context, 
it turns out that the maps $\gamma_B$, $\gamma_C$ are indeed the antipode map of a locally 
compact quantum groupoid $(A,\Delta)$, restricted to the subalgebras $B$ and $C$, which are 
essentially the ``source algebra'' and the ``target algebra''.  And, our anti-isomorphism $R$ would 
be the restriction to $B$ of the ``unitary antipode'' $R_A:A\to A$.  More systematic treatment on 
this direction will be given in our upcoming works \cite{BJKVD_LSthm}, \cite{BJKVD_qgroupoid1}, 
\cite{BJKVD_qgroupoid2}.

\section{Properties of the separability idempotent}

Suppose we have a separability triple $(E,B,\nu)$ in the sense of the previous section, 
with $C$ and $R$ understood.  We have already observed that $E$ is uniquely determined by 
the pair $(B,\nu)$.  Now, using the ${}^*$-anti-isomorphism $R:B\to C$, we can also define a faithful 
weight $\mu$ on $C$, by
$$
\mu:=\nu\circ R^{-1}=\nu\circ\gamma_C.
$$
The second characterization is true because $\gamma_C=\sigma^{\nu}_{i/2}\circ R^{-1}$, 
while $\nu$ is $\sigma^{\nu}$-invariant.  It is not difficult to show that $\mu$ is also a KMS 
weight on the $C^*$-algebra $C$, together with its modular automorphism group 
$(\sigma^{\mu}_t)_{t\in\mathbb{R}}$, given by $\sigma^{\mu}_t:=R\circ\sigma^{\nu}_{-t}\circ R^{-1}$. 
It turns out that the pair $(C,\mu)$ behaves a lot like $(B,\nu)$.

\begin{prop}\label{mudefnprop}
Let $\mu$ be as above.  Then we have:
$$
(\operatorname{id}\otimes\mu)(E)=1.
$$
\end{prop}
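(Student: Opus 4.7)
The plan is to evaluate $T := (\operatorname{id}\otimes\mu)(E)$ by pairing it against a norm-dense family of bounded functionals on $B$. For $b$ in the Tomita algebra $\mathcal{T}_\nu$ (which sits in $B_0\cap B_0^*$ and is dense in $B$), the formula $\omega_b(x) := \nu(xb)$ gives a bounded functional on $B$. As observed in the proof of Proposition~\ref{uniqueE}, $\{\omega_b : b\in\mathcal{T}_\nu\}$ is norm-dense in $B^*$, so it suffices to verify $\omega_b(T) = \omega_b(1) = \nu(b)$ for every such $b$.

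The key computation is to evaluate the joint slice $(\nu\otimes\mu)\bigl(E(b\otimes 1)\bigr)$ in two ways. Slicing $\mu$ in the right leg first, one has $(\operatorname{id}\otimes\mu)\bigl(E(b\otimes 1)\bigr) = T\cdot b$, so the joint slice equals $\nu(Tb) = \omega_b(T)$. Slicing $\nu$ in the left leg first, after using Proposition~\ref{sepidgamma}(1) to rewrite $E(b\otimes 1) = E(1\otimes\gamma_B(b))$ and invoking $(\nu\otimes\operatorname{id})(E)=1$, the joint slice equals $\mu(\gamma_B(b))$. Because $\mu = \nu\circ R^{-1}$ and $\gamma_B = R\circ\sigma^\nu_{i/2}$, this last quantity simplifies to $\nu(\sigma^\nu_{i/2}(b))$, which equals $\nu(b)$ by the $\sigma^\nu$-invariance of $\nu$ extended analytically on the Tomita algebra (this is precisely the step the authors already invoked to justify the second characterization $\mu = \nu\circ\gamma_C$). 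Equating the two evaluations yields $\omega_b(T) = \nu(b)$ on the dense family, whence $T = 1$ in $M(B)$.

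The main technical obstacle is making the slice-and-Fubini manipulations rigorous, since $\mu$ is an unbounded KMS weight rather than a bounded functional and $T$ is a priori only an element of the extended positive cone of $M(B)$. I would handle this by first carrying out the computation with $\omega\in\mathcal{G}_\mu$ in place of $\mu$, where every step is a routine identity for bounded slice maps, giving
\[
\nu\bigl((\operatorname{id}\otimes\omega)(E)\cdot b\bigr) = \omega(\gamma_B(b)),\qquad b\in\mathcal{T}_\nu.
\]
Passing to the monotone supremum along the directed net $\mathcal{G}_\mu$ and using lower semi-continuity of $\mu$, the right-hand side tends to $\mu(\gamma_B(b)) = \nu(b) < \infty$. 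This forces $Tb\in\overline{\mathfrak{M}}_\nu$ (so in particular $T$ is bounded and belongs to $M(B)$, not merely its extended positive part) and simultaneously delivers $\omega_b(T) = \nu(b) = \omega_b(1)$, giving the desired equality.
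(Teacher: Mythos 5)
Your proof is correct and follows essentially the same route as the paper: both reduce the claim to the identity $\mu\bigl(\gamma_B(b)\bigr)=\nu(b)$ for $b$ in a dense analytic subalgebra, obtained by applying $\mu$ to $(\nu\otimes\operatorname{id})\bigl(E(b\otimes1)\bigr)=\gamma_B(b)$ and using $\mu=\nu\circ R^{-1}$ together with the $\sigma^\nu$-invariance of $\nu$, then conclude by density of the functionals $\nu(\,\cdot\,b)$ in $B^*$. The extra care you take with ${\mathcal G}_\mu$ and the Fubini interchange is a welcome refinement rather than a different argument --- the paper sidesteps it by reading $(\operatorname{id}\otimes\mu)(E)=1$ as the sliced condition $\mu\bigl((\theta\otimes\operatorname{id})(E)\bigr)=\theta(1)$, which is exactly what you verify.
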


\begin{proof}
As in Remark following Definition~\ref{separabilitytriple}, the above equation means that 
$(\theta\otimes\operatorname{id})(E)\in\overline{\mathfrak M}_{\mu}$ for all $\theta\in B^*$ 
(naturally extended to the multiplier algebra level), and that $\mu\bigl((\theta\otimes\operatorname{id})(E)\bigr)
=\theta(1)$.

We can verify this for $\theta=\nu(\,\cdot\,b)\in B^*$, where $b\in{\mathcal D}(\gamma_B)$.  Such 
functionals are dense in $B^*$, so that will prove the proposition.  To see this, consider an arbitrary  
$b\in{\mathcal D}(\gamma_B)$.  Then by Definition~\ref{separabilitytriple}\,(2), we have 
$(\nu\otimes\operatorname{id})\bigl(E(b\otimes1)\bigr)=(R\circ\sigma^{\nu}_{i/2})(b)=\gamma_B(b)$. 
Apply here $\mu$.  Then we have: 
$$
\mu\bigl((\nu\otimes\operatorname{id})(E(b\otimes1))\bigr)=\mu\bigl((R\circ\sigma^{\nu}_{i/2})(b)\bigr)
=\nu(b),
$$
where, we used the fact that $\mu=\nu\circ R^{-1}$ and that $\nu$ is $\sigma^{\nu}$-invariant.  Observe that 
this equation can be re-written as 
$$
\mu\bigl((\theta\otimes\operatorname{id})(E)\bigr)=\theta(1),
$$
proving the claim.
\end{proof}

\begin{cor}
As a consequence of the previous proposition, we have the following:
$$
(\operatorname{id}\otimes\mu)\bigl((1\otimes c)E\bigr)=\gamma_C(c),
$$
for $c\in{\mathcal D}(\gamma_C)$.
\end{cor}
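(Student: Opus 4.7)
The plan is to combine Proposition~\ref{sepidgamma'}(4) with the previous proposition in an essentially one-line calculation, then worry only about making the ``slice map'' manipulations rigorous.

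First I would use Proposition~\ref{sepidgamma'}(4), which says that for $c\in{\mathcal D}(\gamma_C)$ one has $(1\otimes c)E=\bigl(\gamma_C(c)\otimes1\bigr)E$ as an identity in $M(B\otimes C)$. This immediately reduces the problem to computing $(\operatorname{id}\otimes\mu)\bigl((\gamma_C(c)\otimes1)E\bigr)$. Pulling the element $\gamma_C(c)\in B$ out to the left of the slice and invoking Proposition~\ref{mudefnprop} then formally gives $\gamma_C(c)\cdot(\operatorname{id}\otimes\mu)(E)=\gamma_C(c)\cdot 1=\gamma_C(c)$, which is the asserted identity.

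To make this precise, I would interpret both sides in the same dual manner used in the remark after Definition~\ref{separabilitytriple} and in the proof of Proposition~\ref{mudefnprop}: test against an arbitrary $\theta\in B^*$. For such $\theta$, the functional $\theta'\in B^*$ defined by $\theta'(x):=\theta\bigl(\gamma_C(c)x\bigr)$ satisfies
\[
(\theta\otimes\operatorname{id})\bigl((\gamma_C(c)\otimes1)E\bigr)=(\theta'\otimes\operatorname{id})(E),
\]
so that, by Proposition~\ref{mudefnprop},
\[
\mu\Bigl((\theta\otimes\operatorname{id})\bigl((1\otimes c)E\bigr)\Bigr)
=\mu\bigl((\theta'\otimes\operatorname{id})(E)\bigr)=\theta'(1)=\theta\bigl(\gamma_C(c)\bigr).
\]
Since this holds for every $\theta\in B^*$, the element $(\operatorname{id}\otimes\mu)\bigl((1\otimes c)E\bigr)\in M(B)$ is well-defined and equals $\gamma_C(c)$.

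I do not foresee a genuine obstacle: the only thing to be careful about is that $(\operatorname{id}\otimes\mu)(E)=1$ is a shorthand (since $\mu$ is unbounded), so one must phrase the argument by slicing against functionals $\theta\in B^*$ rather than manipulating $\operatorname{id}\otimes\mu$ as an honest map. Once that interpretive point is handled, the proof is a direct two-step application of Proposition~\ref{sepidgamma'}(4) and Proposition~\ref{mudefnprop}.
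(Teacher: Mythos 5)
Your proposal is correct and follows essentially the same route as the paper: apply Proposition~\ref{sepidgamma'}\,(4) to rewrite $(1\otimes c)E$ as $\bigl(\gamma_C(c)\otimes1\bigr)E$ and then invoke $(\operatorname{id}\otimes\mu)(E)=1$ from Proposition~\ref{mudefnprop}. The extra care you take in testing against functionals $\theta\in B^*$ is a sound way of making the unbounded slice rigorous, and it matches the interpretation the paper itself uses in the remark after Definition~\ref{separabilitytriple}.
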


\begin{proof}
Use the fact that $(1\otimes c)E=\bigl(\gamma_C(c)\otimes1\bigr)E$, for $c\in D(\gamma_C)$ and that 
$(\operatorname{id}\otimes\mu)(E)=1$.  Note also that ${\mathcal D}(\gamma_C)
={\mathcal D}\bigl((\sigma^{\nu}_{i/2}\circ R^{-1})\bigr)={\mathcal D}(\sigma^{\mu}_{-i/2})$.
\end{proof}

We also have the following alternative descriptions for the maps $\gamma_B$, $\gamma_B^{-1}$, 
$\gamma_C$, $\gamma_C^{-1}$, which will be useful later:

\begin{prop}\label{gammagamma'eqns}
We have:
\begin{align}
&\gamma_B=R\circ\sigma^{\nu}_{i/2}=\sigma^{\mu}_{-i/2}\circ R \notag \\
&\gamma_B^{-1}=\sigma^{\nu}_{-i/2}\circ R^{-1}=R^{-1}\circ\sigma^{\mu}_{i/2} \notag \\
&\gamma_C=\sigma^{\nu}_{i/2}\circ R^{-1}=R^{-1}\circ\sigma^{\mu}_{-i/2} \notag \\
&\gamma_C^{-1}=R\circ\sigma^{\nu}_{-i/2}=\sigma^{\mu}_{i/2}\circ R \notag
\end{align}
\end{prop}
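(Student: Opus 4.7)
The plan is to reduce everything to the intertwining identity between the two modular groups, namely $\sigma^{\mu}_t = R\circ\sigma^{\nu}_{-t}\circ R^{-1}$ for all $t\in\mathbb{R}$, and then analytically continue it. This identity was already noted earlier in the paper when $\mu$ was defined via $\mu=\nu\circ R^{-1}$, and it holds because $R$ intertwines the KMS data of $\nu$ and $\mu$. Rewriting it gives the equivalent form
\[
R\circ\sigma^{\nu}_t=\sigma^{\mu}_{-t}\circ R\quad\text{on }B,\qquad
R^{-1}\circ\sigma^{\mu}_t=\sigma^{\nu}_{-t}\circ R^{-1}\quad\text{on }C,
\]
valid on all of $B$ (resp.\ $C$) for real $t$. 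Each of the four claimed equalities is then just the specialization of one of these two relations at $t=\pm i/2$.

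First I would verify the analytic continuation. Fix $b\in{\mathcal D}(\sigma^{\nu}_{i/2})=B_0$; then $t\mapsto\sigma^{\nu}_t(b)$ has an analytic extension to the strip $\{0\le\operatorname{Im}z\le 1/2\}$, continuous on its closure and bounded. Since $R$ is a bounded ${}^*$-anti-isomorphism, applying $R$ yields an analytic $C$-valued extension of $t\mapsto R(\sigma^{\nu}_t(b))=\sigma^{\mu}_{-t}(R(b))$ to the same strip. Because the analytic generator $\sigma^{\mu}_{-i/2}$ is characterized by exactly such extensions, we conclude $R(b)\in{\mathcal D}(\sigma^{\mu}_{-i/2})$ with $\sigma^{\mu}_{-i/2}(R(b))=R(\sigma^{\nu}_{i/2}(b))$. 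The reverse inclusion of domains is obtained by running the same argument with $R^{-1}$ (also bounded) in place of $R$, so that
\[
R\circ\sigma^{\nu}_{i/2}=\sigma^{\mu}_{-i/2}\circ R
\]
as equalities of closed densely-defined maps with matching domains. This is precisely the second expression for $\gamma_B$, the first expression being the definition $\gamma_B:=R\circ\sigma^{\nu}_{i/2}$ from Section~2.

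The remaining three identities are obtained by the same argument specialized to the other value of the parameter or to $R^{-1}$: taking $t=-i/2$ in $R\circ\sigma^{\nu}_t=\sigma^{\mu}_{-t}\circ R$ yields $\gamma_C^{-1}=R\circ\sigma^{\nu}_{-i/2}=\sigma^{\mu}_{i/2}\circ R$, while taking $t=i/2$ and $t=-i/2$ in $R^{-1}\circ\sigma^{\mu}_t=\sigma^{\nu}_{-t}\circ R^{-1}$ gives the two expressions for $\gamma_B^{-1}$ and $\gamma_C$ respectively (recalling the definitions $\gamma_B^{-1}=\sigma^{\nu}_{-i/2}\circ R^{-1}$ and $\gamma_C=\sigma^{\nu}_{i/2}\circ R^{-1}$ from Section~2).

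The main (and only) technical obstacle is the justification of the domain equality in the analytic continuation step: one must check that $b\in{\mathcal D}(\sigma^{\nu}_{\pm i/2})$ if and only if $R(b)\in{\mathcal D}(\sigma^{\mu}_{\mp i/2})$, not merely that the two expressions agree where both are defined. This is however routine, because $R$ and $R^{-1}$ are both bounded ${}^*$-anti-isomorphisms and hence preserve analyticity of Banach-space-valued functions on strips; the closedness of the analytic generators (noted in Definition~\ref{KMSweight}) then secures the domain equality.
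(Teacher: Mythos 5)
Your proposal is correct and follows essentially the same route as the paper: the paper likewise takes the $\nu$-expressions as the already-established definitions from Section~2 and derives the $\mu$-expressions from the intertwining relations $\sigma^{\mu}_{\pm i/2}=R\circ\sigma^{\nu}_{\mp i/2}\circ R^{-1}$, i.e.\ the analytic continuation of $\sigma^{\mu}_t=R\circ\sigma^{\nu}_{-t}\circ R^{-1}$. The only difference is that the paper states this continuation as a fact, whereas you supply the (correct) justification that the bounded anti-isomorphism $R$ preserves analyticity on strips and hence matches the domains of the analytic generators.
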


\begin{proof}
The definitions involving the weight $\nu$ have been already observed in the previous section. 
As for the descriptions involving the weight $\mu$, we just use the fact that 
$\sigma^{\mu}_{i/2}=R\circ\sigma^{\nu}_{-i/2}\circ R^{-1}$ and $\sigma^{\mu}_{-i/2}=
R\circ\sigma^{\nu}_{i/2}\circ R^{-1}$.
\end{proof}

In the below is the result that is usually assumed as part of the definition of a separability 
idempotent in the purely algebraic setting:

\begin{prop}\label{BtensorC}
For all $b\in B$ and all $c\in C$, we have:
$$
E(1\otimes c)\in B\otimes C,\quad (1\otimes c)E\in B\otimes C,\quad
(b\otimes 1)E\in B\otimes C,\quad E(b\otimes 1)\in B\otimes C.
$$
Here, $\otimes$ is the (spatial) $C^*$-tensor product.
\end{prop}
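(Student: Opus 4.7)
The plan is to prove the first inclusion, $E(b \otimes 1) \in B \otimes C$, by a factorization argument, and then deduce the other three by the same idea (for $E(1 \otimes c)$) and by taking adjoints (for $(b \otimes 1) E$ and $(1 \otimes c) E$).

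First I would observe that if $b_1 \in B_0 = \mathcal{D}(\gamma_B)$ and $b_2 \in B$, then by Proposition~\ref{sepidgamma}\,(1) together with the trivial commutation $(1\otimes \gamma_B(b_1))(b_2 \otimes 1) = b_2 \otimes \gamma_B(b_1)$, one has
$$
E(b_1 b_2 \otimes 1) = E(b_1 \otimes 1)(b_2 \otimes 1) = E\bigl(1 \otimes \gamma_B(b_1)\bigr)(b_2 \otimes 1) = E\bigl(b_2 \otimes \gamma_B(b_1)\bigr).
$$
Since $\gamma_B(b_1) \in C$, the factor $b_2 \otimes \gamma_B(b_1)$ is an elementary tensor in $B \otimes C$, and as $E \in M(B \otimes C)$ acts as a left multiplier on $B \otimes C$, the right-hand side lies in $B \otimes C$. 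Because $\mathcal{T}_\nu \subseteq B_0$ is a norm-dense $*$-subalgebra of $B$ that contains an approximate identity for $B$, the product set $B_0 \cdot B$ is norm-dense in $B$. Approximating an arbitrary $b \in B$ by elements of the form $b_1 b_2$ with $b_1 \in B_0$, the corresponding $E(b_1 b_2 \otimes 1)$'s converge in norm to $E(b \otimes 1)$, which therefore lies in the norm-closed subspace $B \otimes C$.

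Next, for $E(1 \otimes c) \in B \otimes C$, I would run the parallel argument using Proposition~\ref{sepidgamma}\,(2). For $c_1 \in C_0 = \mathcal{D}(\gamma_B^{-1})$ and $c_2 \in C$,
$$
E(1 \otimes c_1 c_2) = E(1 \otimes c_1)(1 \otimes c_2) = E\bigl(\gamma_B^{-1}(c_1) \otimes 1\bigr)(1 \otimes c_2) = E\bigl(\gamma_B^{-1}(c_1) \otimes c_2\bigr),
$$
which lies in $E \cdot (B \otimes C) \subseteq B \otimes C$. Since $C_0 \cdot C$ is dense in $C$ by the same Tomita-algebra argument applied to the weight $\mu$ on $C$ (introduced just after Proposition~\ref{sepidgamma'}), this extends to all $c \in C$. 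Finally, the remaining two inclusions follow by taking adjoints: $(b \otimes 1) E = \bigl(E(b^* \otimes 1)\bigr)^* \in B \otimes C$ and $(1 \otimes c) E = \bigl(E(1 \otimes c^*)\bigr)^* \in B \otimes C$, using $E = E^*$ and that $B \otimes C$ is self-adjoint and norm-closed.

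The conceptual point driving the argument is the commutation identity $(1 \otimes c)(b \otimes 1) = b \otimes c$, which lets Proposition~\ref{sepidgamma} trade a ``bad'' factor $b_1 \otimes 1 \in M(B) \otimes 1$ for a factor $1 \otimes \gamma_B(b_1) \in 1 \otimes C$ that then absorbs into an elementary tensor lying in $B \otimes C$. The only step that requires care is the density claim $B_0 \cdot B = B$ (and $C_0 \cdot C = C$), but this is standard since $\mathcal{T}_\nu \subseteq B_0$ is dense and $B_0 \cdot B \supseteq \mathcal{T}_\nu \cdot B$ already contains the norm limits of $e_\lambda b$ along any approximate identity $(e_\lambda) \subseteq \mathcal{T}_\nu$.
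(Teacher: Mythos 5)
Your proof is correct and follows essentially the same route as the paper: use Proposition~\ref{sepidgamma} to trade a one-leg factor for an elementary tensor of $B\otimes C$ that $E\in M(B\otimes C)$ then absorbs, and conclude by density of the relevant product sets ($B_0\cdot B$ in $B$, $C_0\cdot C=\operatorname{Ran}(\gamma_B)\cdot C$ in $C$). The paper writes out only the $E(1\otimes c)$ case (using that $\operatorname{Ran}(\gamma_B)\,C$ is dense because $C^2=C$) and declares the rest similar; your explicit treatment of the other cases, including the adjoint trick for $(b\otimes1)E$ and $(1\otimes c)E$, is exactly such a similar argument.
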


\begin{proof}
Since $E\in M(B\otimes C)$, we know that $E(b\otimes c)\in B\otimes C$, for any $b\in B$ and 
any $c\in C$.  If, in particular, $b_0\in{\mathcal D}(\gamma_B)$, we have:
$$
E\bigl(1\otimes\gamma_B(b_0)c\bigr)=E\bigl(1\otimes\gamma_B(b_0)\bigr)(1\otimes c)
=E(b_0\otimes1)(1\otimes c)=E(b_0\otimes c)\in B\otimes C.
$$
But the elements $\gamma_B(b_0)c$, for $b_0\in{\mathcal D}(\gamma_B)$, $c\in C$, 
are dense in $C$, because $\operatorname{Ran}(\gamma_B)$ is dense in $C$ and 
$C^2=C$.  It follows that $E(1\otimes c)\in B\otimes C$ for all $c\in C$.  Similar argument 
holds for each of the other three cases.
\end{proof}

\begin{cor}
As a consequence of the proposition, we have: 
$$
(\operatorname{id}\otimes\omega)(E)\in B,\quad\forall\omega\in C^*,\quad
{\text { and }}\quad
(\theta\otimes\operatorname{id})(E)\in C,\quad\forall\theta\in B^*.
$$
\end{cor}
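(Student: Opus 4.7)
The plan is to leverage Proposition~\ref{BtensorC}, which gives $E(1\otimes c)\in B\otimes C$ for every $c\in C$ (and the three symmetric statements). A priori $(\operatorname{id}\otimes\omega)(E)$ lies only in $M(B)$, and the goal is to promote it into $B$. The strategy is an approximate-identity argument combined with the standard fact that $C$ acts nondegenerately on its dual $C^*$.

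First I would fix $\omega\in C^*$ and a bounded approximate identity $(f_\beta)$ for $C$. By Proposition~\ref{BtensorC} we have $E(1\otimes f_\beta)\in B\otimes C$, so applying the slice map $\operatorname{id}\otimes\omega$ (which carries $B\otimes C$ into $B$) yields
$$
(\operatorname{id}\otimes\omega)\bigl(E(1\otimes f_\beta)\bigr)=(\operatorname{id}\otimes\omega_\beta)(E)\in B,
$$
where $\omega_\beta\in C^*$ is defined by $\omega_\beta(c):=\omega(cf_\beta)$. It then suffices to show $(\operatorname{id}\otimes\omega_\beta)(E)\to(\operatorname{id}\otimes\omega)(E)$ in norm, since $B$ is closed in $M(B)$.

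The main step is the norm convergence $\|\omega_\beta-\omega\|\to 0$. I would invoke the Cohen--Hewitt factorization theorem to write $\omega(\cdot)=\omega_0(\cdot\,c_0)$ for some $\omega_0\in C^*$ and $c_0\in C$. Then $(\omega-\omega_\beta)(c)=\omega_0\bigl(c(c_0-f_\beta c_0)\bigr)$, so $\|\omega-\omega_\beta\|\le\|\omega_0\|\cdot\|c_0-f_\beta c_0\|\to 0$. Combined with norm continuity of the slice map ($\|\operatorname{id}\otimes\phi\|\le\|\phi\|$ on the minimal $C^*$-tensor product, extended naturally to $M(B\otimes C)$), this gives $(\operatorname{id}\otimes\omega_\beta)(E)\to(\operatorname{id}\otimes\omega)(E)$ in norm; since each $(\operatorname{id}\otimes\omega_\beta)(E)$ sits in $B$, so does the limit.

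The second assertion $(\theta\otimes\operatorname{id})(E)\in C$ for $\theta\in B^*$ is entirely symmetric: use $(e_\alpha\otimes1)E\in B\otimes C$ for an approximate identity $(e_\alpha)$ of $B$ and apply Cohen--Hewitt on $B^*$. The only delicate point is the norm convergence $\|\omega_\beta-\omega\|\to 0$, which is really just the nondegeneracy of the natural module structure of $C^*$ over $C$; everything else reduces to the closedness of $B$ inside $M(B)$ and norm continuity of slice maps.
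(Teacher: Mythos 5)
Your argument is correct. Structurally it follows the same skeleton as the paper's proof: invoke Proposition~\ref{BtensorC} to see that slicing $E$ against a functional that ``absorbs'' an algebra element lands in $B$ (resp.\ $C$), then pass to the limit using norm continuity of slice maps and the fact that $B$ is norm-closed in $M(B)$. The difference lies in how the dense family of approximating functionals is produced. The paper takes $b\in{\mathcal D}(\gamma_B)$ and uses the functionals $\nu(\,\cdot\,b)$, asserting that these form a norm-dense subspace of $B^*$ --- a claim tied to the weight $\nu$ and used repeatedly elsewhere in the paper (e.g.\ in Proposition~\ref{uniqueE}), but stated without proof and requiring some care even to see that $\nu(\,\cdot\,b)$ defines a bounded functional on all of $B$. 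You instead approximate a \emph{given} $\omega\in C^*$ by $\omega(\,\cdot\,f_\beta)$ for a bounded approximate identity $(f_\beta)$ of $C$, with norm convergence $\|\omega(\,\cdot\,f_\beta)-\omega\|\to0$ guaranteed by the Cohen--Hewitt factorization $\omega=\omega_0(\,\cdot\,c_0)$. This buys you two things: the argument makes no reference to the weights at all, so it works for any projection (indeed any multiplier) $E\in M(B\otimes C)$ satisfying the conclusion of Proposition~\ref{BtensorC}; and it sidesteps the density-in-$B^*$ assertion entirely, replacing it by the standard nondegeneracy of the dual module. The paper's route is shorter given that the density of $\{\nu(\,\cdot\,b)\}$ is already being taken for granted elsewhere, but yours is the more self-contained and robust of the two.
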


\begin{proof}
Let $b\in{\mathcal D}(\gamma_B)\subseteq B$.  Then, we know from the above proposition that 
$E(b\otimes1)\in B\otimes C$. So
$$
(\nu(\,\cdot\,b)\otimes\operatorname{id})(E)=(\nu\otimes\operatorname{id})\bigl(E(b\otimes1))\in C.
$$
Since the $\nu(\,\cdot\,b)$ form a dense subspace of $B^*$, this means that $(\theta\otimes
\operatorname{id})(E)\in C$, for all $\theta\in B^*$.  Similarly, we have $(\operatorname{id}
\otimes\omega)(E)\in B$, for all $\omega\in C^*$.
\end{proof}

We already knew that $(\operatorname{id}\otimes\omega)(E)\in M(B)$, but we now see that 
it is actually contained in $B$.  Moreover, we see below that such elements form a dense 
subspace in $B$.  Similar for $C$.

\begin{prop}\label{Efull}
The separability idempotent $E$ is ``full'', in the sense that

$\bigl\{(\theta\otimes\operatorname{id})(E(b\otimes1)):b\in B,\theta\in B^*\bigr\}$ is dense in $C$,

$\bigl\{(\theta\otimes\operatorname{id})((b\otimes1)E):b\in B,\theta\in B^*\bigr\}$ is dense in $C$,

$\bigl\{(\operatorname{id}\otimes\omega)((1\otimes c)E):c\in C,\omega\in C^*\bigr\}$ is dense in $B$,

$\bigl\{(\operatorname{id}\otimes\omega)(E(1\otimes c)):c\in C,\omega\in C^*\bigr\}$ is dense in $B$.
\end{prop}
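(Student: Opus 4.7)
The four assertions are parallel; I will prove part 1 in detail and indicate how the others follow by the same template. My plan is to show that the dense subspace $C_0 = \operatorname{Ran}(\gamma_B)\subseteq C$ is contained in the norm closure of
$$
S_1 := \bigl\{(\theta\otimes\operatorname{id})(E(b\otimes 1)) : b\in B,\ \theta\in B^*\bigr\}.
$$
Since $S_1$ is a linear subspace of $C$, its weak and norm closures agree, so it suffices to realize each $\gamma_B(b_0)$ ($b_0\in B_0$) as a weak limit of elements of $S_1$.

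Fix $b_0\in B_0$. By the remark following Definition~\ref{separabilitytriple}, we have $(\operatorname{id}\otimes\omega)(E)\,b_0\in\mathfrak{M}_\nu$ and $\nu\bigl((\operatorname{id}\otimes\omega)(E)\,b_0\bigr)=\omega(\gamma_B(b_0))$ for every $\omega\in C^*$. For $\theta\in\mathcal{G}_\nu\subset B^*_+$, a routine slice manipulation gives
$$
\omega\bigl((\theta\otimes\operatorname{id})(E(b_0\otimes 1))\bigr) = \theta\bigl((\operatorname{id}\otimes\omega)(E)\,b_0\bigr).
$$
As $\theta$ runs over the directed set $\mathcal{G}_\nu$, lower semi-continuity of $\nu$ (and the resulting extension of $\nu$ to the $*$-subalgebra $\mathfrak{M}_\nu$) forces convergence of the right-hand side to $\nu\bigl((\operatorname{id}\otimes\omega)(E)\,b_0\bigr)=\omega(\gamma_B(b_0))$. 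Hence $(\theta\otimes\operatorname{id})(E(b_0\otimes 1))\to\gamma_B(b_0)$ in the weak topology $\sigma(C,C^*)$, so $\gamma_B(b_0)\in\overline{S_1}$. Letting $b_0$ range over $B_0$ and using that $\gamma_B(B_0)=C_0$ is norm-dense in $C$, we obtain $\overline{S_1}=C$.

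Parts 2, 3, and 4 follow by the identical weak-approximation scheme, with the appropriate identity and dense range substituted. For part 2, one takes adjoints in Definition~\ref{separabilitytriple}(2) (using that $R$ is $*$-preserving and $E=E^*$) to obtain $(\nu\otimes\operatorname{id})((b\otimes 1)E)=\gamma_C^{-1}(b)$ for $b\in B_0^*$, and then uses that $\operatorname{Ran}(\gamma_C^{-1})$ is dense in $C$. For part 3, one applies the corollary to Proposition~\ref{mudefnprop}, namely $(\operatorname{id}\otimes\mu)((1\otimes c)E)=\gamma_C(c)$ for $c\in\mathcal{D}(\gamma_C)$, approximating $\mu$ by functionals in $\mathcal{G}_\mu\subset C^*_+$ and invoking density of $\operatorname{Ran}(\gamma_C)$ in $B$. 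Part 4 is the $*$-adjoint counterpart of part 3, based on the analogous identity $(\operatorname{id}\otimes\mu)(E(1\otimes c))=\gamma_B^{-1}(c)$ on a dense domain, whose range $B_0$ is dense in $B$.

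The only delicate point is the scalar convergence $\theta\bigl((\operatorname{id}\otimes\omega)(E)\,b_0\bigr)\to\nu\bigl((\operatorname{id}\otimes\omega)(E)\,b_0\bigr)$, which rests on $(\operatorname{id}\otimes\omega)(E)\,b_0$ lying in $\mathfrak{M}_\nu$; this is guaranteed by Proposition~\ref{B0} together with the definition of a separability triple. The transition from weak to norm closure is then automatic from the convexity (indeed linearity) of $S_1$.
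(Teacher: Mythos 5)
Your argument is correct, but it takes a genuinely different route from the paper's. The paper proves density by an exact computation: for $b_1,b_2\in{\mathcal D}(\gamma_B)$ it takes the specific functional $\theta=\nu(\,\cdot\,b_2)\in B^*$ and uses anti-multiplicativity of $\gamma_B$ to get $(\theta\otimes\operatorname{id})\bigl(E(b_1\otimes1)\bigr)=\gamma_B(b_1b_2)$ on the nose, so the set in question literally contains the dense subset $\bigl\{\gamma_B(b_1b_2)\bigr\}$ of $C$ and no limiting process is needed. You instead approximate the weight $\nu$ itself from below by the Combes net ${\mathcal G}_\nu$, obtain $\gamma_B(b_0)$ only as a $\sigma(C,C^*)$-limit, and then upgrade weak closure to norm closure by convexity. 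What your approach buys is that you never need products $b_1b_2$, the anti-homomorphism property of $\gamma_B$, or the fact that $\nu(\,\cdot\,b)$ defines a bounded functional on $B$; the price is the lower semi-continuity machinery and the Mazur step. One small imprecision: the set $S_1$ as written is not obviously a linear (or even convex) subset of $C$, since there is no evident way to combine slices taken against different $b$'s; but your net for a fixed $b_0$ lives entirely in the subspace $\bigl\{(\theta\otimes\operatorname{id})(E(b_0\otimes1)):\theta\in B^*\bigr\}$, which \emph{is} linear, so applying the weak-equals-norm-closure principle to that subspace (rather than to all of $S_1$) repairs the statement with no further change. With that adjustment, all four parts go through as you describe.
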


\begin{proof}
By Corollary following Proposition~\ref{BtensorC}, we are assured that containment statement for 
each set is true.  So we just need to show that they are dense subsets.

Consider arbitrary $b_1,b_2\in{\mathcal D}(\gamma_B)$, which is dense in $B$.  We know from 
Proposition~\ref{gammagamma-1} that ${\mathcal D}(\gamma_B)$ is closed under multiplication, 
so $b_1b_2\in{\mathcal D}(\gamma_B)$.  For $\theta=\nu(\,\cdot\,b_2)\in B^*$, we have:
$$
(\theta\otimes\operatorname{id})\bigl(E(b_1\otimes1)\bigr)=(\nu\otimes\operatorname{id})
\bigl(E(b_1b_2\otimes1)\bigr)=\gamma_B(b_1b_2).
$$
This shows that $\bigl\{(\theta\otimes\operatorname{id})(E(b\otimes1)):b\in B,\theta\in B^*\bigr\}$ 
contains $\bigl\{\gamma_B(b_1b_2):b_1,b_2\in{\mathcal D}(\gamma_B)\bigr\}$, which is dense 
in $C$ because $\gamma_B$ was shown to have a dense range in $C$.  This proves the first 
statement.  Proofs for the other statements are very much similar, knowing that the maps 
$\gamma_B$, $\gamma_B^{-1}$, $\gamma_C$, $\gamma_C^{-1}$ all have dense ranges.
\end{proof}

Here is a result that is related to the fact that $E$ is full.  While it is possible to give the proof 
using the above proposition, we instead chose to give a direct proof, which seems simpler.

\begin{prop}
\begin{enumerate}
  \item If $(1\otimes c)E=0$, $c\in C$, then necessarily $c=0$.
  \item If $E(1\otimes c)=0$, $c\in C$, then necessarily $c=0$.
  \item If $E(b\otimes 1)=0$, $b\in B$, then necessarily $b=0$.
  \item If $(b\otimes 1)E=0$, $b\in B$, then necessarily $b=0$.
\end{enumerate}
\end{prop}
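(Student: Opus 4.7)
The plan is to exploit the self-adjointness $E=E^*$ to reduce the four parts to two, and then apply the weight slice maps $\nu\otimes\operatorname{id}$ and $\operatorname{id}\otimes\mu$. Taking the adjoint of $(1\otimes c)E=0$ and using $E^*=E$ gives $E(1\otimes c^*)=0$, so (1) and (2) are equivalent under $c\mapsto c^*$; likewise, (3) and (4) are equivalent. Hence it suffices to prove (1) and (3), and these two are themselves mirror images of one another, with roles of the left/right factor and the weights $\nu$, $\mu$ interchanged.

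For (1), the key identity to establish is
\[
(\nu\otimes\operatorname{id})\bigl((1\otimes c)E\bigr)=c,
\]
from which the hypothesis $(1\otimes c)E=0$ immediately forces $c=0$. To verify this, pair with an arbitrary $\theta\in C^*$ and set $\theta_c(y):=\theta(cy)$, so that $(\operatorname{id}\otimes\theta)\bigl((1\otimes c)E\bigr)=(\operatorname{id}\otimes\theta_c)(E)$. Then
\[
\theta\bigl((\nu\otimes\operatorname{id})((1\otimes c)E)\bigr)=\nu\bigl((\operatorname{id}\otimes\theta_c)(E)\bigr)=\theta_c(1)=\theta(c),
\]
where the middle equality is precisely the content of the Remark following Definition~\ref{separabilitytriple} (i.e.\ the statement $(\nu\otimes\operatorname{id})(E)=1$).

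The proof of (3) is entirely parallel on the right factor. The analogous identity to establish is $(\operatorname{id}\otimes\mu)\bigl(E(b\otimes1)\bigr)=b$. Here one pairs with $\theta\in B^*$, defines $\theta_b(x):=\theta(xb)$ so that $(\theta\otimes\operatorname{id})(E(b\otimes1))=(\theta_b\otimes\operatorname{id})(E)$, and then invokes the remark following Proposition~\ref{mudefnprop} to conclude $\mu\bigl((\theta_b\otimes\operatorname{id})(E)\bigr)=\theta_b(1)=\theta(b)$. The hypothesis $E(b\otimes 1)=0$ then forces $b=0$, and (4) follows by adjoints.

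The main subtlety, and really the only point requiring care, is justifying that the unbounded slices $(\nu\otimes\operatorname{id})\bigl((1\otimes c)E\bigr)$ and $(\operatorname{id}\otimes\mu)\bigl(E(b\otimes 1)\bigr)$ make sense as elements of $M(C)$ and $M(B)$ respectively. This is guaranteed because the partial slices $(\operatorname{id}\otimes\theta_c)(E)$ and $(\theta_b\otimes\operatorname{id})(E)$ lie in $\overline{\mathfrak M}_\nu$ and $\overline{\mathfrak M}_\mu$, as spelled out in the remarks following Definition~\ref{separabilitytriple} and Proposition~\ref{mudefnprop}. With that bookkeeping handled, the proof is direct, avoiding any appeal to the fullness result Proposition~\ref{Efull}, consistent with the author's stated intention to give the simpler direct argument.
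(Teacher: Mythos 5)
Your argument is correct, and it is a direct computation in the same spirit as the paper's, but the mechanism is slightly different and worth noting. The paper proves (1) by cutting down on the right by $b\otimes1$ with $b\in{\mathcal D}(\gamma_B)$, using axiom (2) of Definition~\ref{separabilitytriple} to get $c\,\gamma_B(b)=(\nu\otimes\operatorname{id})\bigl((1\otimes c)E(b\otimes1)\bigr)=0$, and then invoking the density of $\operatorname{Ran}(\gamma_B)$ in $C$; you instead pair against an arbitrary $\theta\in C^*$ via $\theta_c=\theta(c\,\cdot\,)$ and use only the normalization axiom $(\nu\otimes\operatorname{id})(E)=1$ (in its extended form $\nu\bigl((\operatorname{id}\otimes\omega)(E)\bigr)=\omega(1)$ for all $\omega\in C^*$, as in the Remark after Definition~\ref{separabilitytriple}), which yields $\theta(c)=\nu\bigl((\operatorname{id}\otimes\theta_c)(E)\bigr)=0$ directly. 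Your route is marginally more economical: it needs neither the $\gamma_B$ formula nor the density of its range, and the reduction of (2) and (4) to (1) and (3) by taking adjoints and using $E=E^*$ is cleaner than the paper's ``proved similarly.'' The only cosmetic slip is the citation for part (3): the statement $\mu\bigl((\theta\otimes\operatorname{id})(E)\bigr)=\theta(1)$ for all $\theta\in B^*$ is the content of Proposition~\ref{mudefnprop} itself (as unpacked in its proof), not of a separate remark; with that reference fixed, the bookkeeping you describe — that $(\operatorname{id}\otimes\theta_c)(E)\in\overline{\mathfrak M}_\nu$ and $(\theta_b\otimes\operatorname{id})(E)\in\overline{\mathfrak M}_\mu$ — is exactly what is needed and is supplied by those two results. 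Both your proof and the paper's avoid Proposition~\ref{Efull}, as intended.
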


\begin{proof}
(1). Let $c\in C$ be such that $(1\otimes c)E=0$.  While, for any $b\in{\mathcal D}(\gamma_B)$, 
we know $\gamma_B(b)=(\nu\otimes\operatorname{id})\bigl(E(b\otimes1)\bigr)$.  So we have: 
$$
c\gamma_B(b)=c(\nu\otimes\operatorname{id})\bigl(E(b\otimes1)\bigr)
=(\nu\otimes\operatorname{id})\bigl((1\otimes c)E(b\otimes1)\bigr)=0.
$$
Since $\operatorname{Ran}(\gamma_B)$ is dense in $C$, this means that $c=0$.

The results (2), (3), (4) can be proved similarly.
\end{proof}

\begin{rem}
The ``fullness'' of $E\in M(B\otimes C)$, as given in Proposition~\ref{Efull}, means that the left 
leg of $E$ is $B$ and the right leg of $E$ is $C$.  In the purely algebraic setting, the fullness 
of $E$ was part of the definition of $E$ being a separability idempotent \cite{VDsepid}.  There, 
from the defining axioms, one obtains the existence of certain ``distinguished linear functionals'', 
$\varphi_B$ and $\varphi_C$.  In our setting, however, we begin first with the weights $\mu$ and $\nu$, 
then obtain the results on $E$, including its fullness, the maps $\gamma_B$, $\gamma_C$, and 
the like.
\end{rem}

In the next proposition, we see that the analytic generators $\sigma^{\mu}_{-i}$ and $\sigma^{\nu}_{-i}$ 
can be characterized in terms of the maps $\gamma_B$ and $\gamma_C$.

\begin{prop}\label{modularauto}
We have:
\begin{enumerate}
  \item $\sigma^{\mu}_{-i}(c)=(\gamma_B\circ\gamma_C)(c)$, for $c\in{\mathcal D}
(\gamma_B\circ\gamma_C)$.
  \item $\sigma^{\nu}_{-i}(b)=(\gamma_B^{-1}\circ\gamma_C^{-1})(b)$, for $b\in{\mathcal D}
(\gamma_B^{-1}\circ\gamma_C^{-1})$.  
\end{enumerate}
\end{prop}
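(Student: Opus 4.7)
The plan is to exploit the alternative descriptions of $\gamma_B$, $\gamma_C$, $\gamma_B^{-1}$, $\gamma_C^{-1}$ provided by Proposition~\ref{gammagamma'eqns}. The key observation is that among the four presentations, there are two that, when composed, cause the anti-isomorphism $R$ and its inverse to cancel, leaving a composition of two analytic generators.

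For part~(1), I would use the descriptions $\gamma_B=\sigma^{\mu}_{-i/2}\circ R$ and $\gamma_C=R^{-1}\circ\sigma^{\mu}_{-i/2}$. For any $c\in{\mathcal D}(\gamma_B\circ\gamma_C)$, one computes
\[
(\gamma_B\circ\gamma_C)(c)=\sigma^{\mu}_{-i/2}\!\Bigl(R\bigl(R^{-1}(\sigma^{\mu}_{-i/2}(c))\bigr)\Bigr)=\sigma^{\mu}_{-i/2}\bigl(\sigma^{\mu}_{-i/2}(c)\bigr)=\sigma^{\mu}_{-i}(c),
\]
the last equality being the usual semigroup property $\sigma^{\mu}_{-i/2}\circ\sigma^{\mu}_{-i/2}\subseteq\sigma^{\mu}_{-i}$ for analytic generators of a norm-continuous one-parameter group. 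For part~(2), the analogous move uses $\gamma_B^{-1}=\sigma^{\nu}_{-i/2}\circ R^{-1}$ and $\gamma_C^{-1}=R\circ\sigma^{\nu}_{-i/2}$, giving $(\gamma_B^{-1}\circ\gamma_C^{-1})(b)=\sigma^{\nu}_{-i/2}(\sigma^{\nu}_{-i/2}(b))=\sigma^{\nu}_{-i}(b)$ for $b\in{\mathcal D}(\gamma_B^{-1}\circ\gamma_C^{-1})$.

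The only point that requires a moment of care is the matching of domains. Since $R$ and $R^{-1}$ are everywhere-defined bijective ${}^*$-anti-isomorphisms, unwinding the chosen descriptions shows that $c\in{\mathcal D}(\gamma_B\circ\gamma_C)$ if and only if $c\in{\mathcal D}(\sigma^{\mu}_{-i/2})$ and $\sigma^{\mu}_{-i/2}(c)\in{\mathcal D}(\sigma^{\mu}_{-i/2})$, which is exactly the natural domain on which the identity $\sigma^{\mu}_{-i/2}\circ\sigma^{\mu}_{-i/2}=\sigma^{\mu}_{-i}$ holds. An identical check works on the $\nu$-side for~(2). No real obstacle is expected; the argument is essentially a bookkeeping exercise with the identities from Proposition~\ref{gammagamma'eqns}.
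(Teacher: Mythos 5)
Your proposal is correct and follows exactly the paper's own argument: both use the identities $\gamma_B=\sigma^{\mu}_{-i/2}\circ R$, $\gamma_C=R^{-1}\circ\sigma^{\mu}_{-i/2}$ (resp.\ $\gamma_B^{-1}=\sigma^{\nu}_{-i/2}\circ R^{-1}$, $\gamma_C^{-1}=R\circ\sigma^{\nu}_{-i/2}$) from Proposition~\ref{gammagamma'eqns}, cancel $R$ against $R^{-1}$, and invoke the semigroup property of the analytic generators. Your added remark on matching domains is a fine point the paper leaves implicit, but it is the same proof.
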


\begin{proof}
For (1), use $\gamma_B=\sigma^{\mu}_{-i/2}\circ R$ and $\gamma_C=R^{-1}\circ\sigma^{\mu}_{-i/2}$, 
observed in Proposition~\ref{gammagamma'eqns}.  For (2), use $\gamma_B^{-1}
=\sigma^{\nu}_{-i/2}\circ R^{-1}$ and $\gamma_C^{-1}=R\circ\sigma^{\nu}_{-i/2}$, again 
from Proposition~\ref{gammagamma'eqns}. 
\end{proof}

\begin{rem}
The significance of Proposition~\ref{modularauto} is that the maps $\gamma_B\circ\gamma_C$ 
and $\gamma_B^{-1}\circ\gamma_C^{-1}$ provide ``modular automorphisms'' for the weights 
$\mu$ and $\nu$, respectively.  To be more precise, recall Lemma~\ref{lemKMS}. 
Our result says that for $c\in{\mathcal D}(\gamma_B\circ\gamma_C)
={\mathcal D}(\sigma^{\mu}_{-i})$ and $x\in{\mathfrak M}_{\mu}$, we have: 
$cx,x\sigma^{\mu}_{-i}(c)\in{\mathfrak M}_{\mu}$, and $\mu(cx)=\mu\bigl(x\sigma^{\mu}_{-i}(c)\bigr)
=\mu\bigl(x(\gamma_B\circ\gamma_C)(c)\bigr)$.  Similar also for the weight $\nu$.
\end{rem}

We will conclude this section by showing a few different characterizations of the idempotent $E$.

\begin{prop}\label{sigmasigmaE}
For any $t\in\mathbb{R}$, we have: $(\sigma^{\nu}_t\otimes\sigma^{\mu}_{-t})(E)=E$.
\end{prop}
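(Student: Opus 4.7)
My plan is to use the uniqueness result (Proposition~\ref{uniqueE}) by showing that $F:=(\sigma^{\nu}_t\otimes\sigma^{\mu}_{-t})(E)$ satisfies exactly the same defining equation \eqref{(Edefeq)} as $E$ does. Since $\sigma^{\nu}_t\otimes\sigma^{\mu}_{-t}$ is a ${}^*$-automorphism of $M(B\otimes C)$, the element $F$ is still a self-adjoint projection, and the verification of condition~(1) of Definition~\ref{separabilitytriple} reduces to $(\nu\otimes\operatorname{id})(F)=\sigma^{\mu}_{-t}\bigl((\nu\circ\sigma^{\nu}_t\otimes\operatorname{id})(E)\bigr)=\sigma^{\mu}_{-t}(1)=1$ using $\sigma^{\nu}$-invariance of $\nu$. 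So the substance is checking that the functional equation \eqref{(Edefeq)} holds for $F$.

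The idea is to evaluate $\bigl(\nu(\,\cdot\,b)\otimes\omega\bigr)(F)$ for $b\in B_0$ and $\omega\in C^*$ by pushing the automorphisms onto the functionals. Using $\sigma^{\nu}$-invariance of $\nu$ and the fact that $\sigma^{\nu}_t$ is an algebra homomorphism, one easily gets
$$
\nu(\cdot\,b)\circ\sigma^{\nu}_t=\nu\bigl(\cdot\,\sigma^{\nu}_{-t}(b)\bigr),
$$
so writing $b':=\sigma^{\nu}_{-t}(b)$ (which again lies in $B_0$, since $\sigma^{\nu}_{-t}$ and $\sigma^{\nu}_{i/2}$ commute on their domains),
$$
\bigl(\nu(\,\cdot\,b)\otimes\omega\bigr)(F)=\bigl(\nu(\,\cdot\,b')\otimes(\omega\circ\sigma^{\mu}_{-t})\bigr)(E)
=\omega\bigl(\sigma^{\mu}_{-t}(\gamma_B(b'))\bigr),
$$
by the defining equation~\eqref{(Edefeq)} for $E$.

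The key identity that collapses the $t$-dependence is $\sigma^{\mu}_{-t}\circ R=R\circ\sigma^{\nu}_t$, which is an immediate consequence of $\sigma^{\mu}_t=R\circ\sigma^{\nu}_{-t}\circ R^{-1}$. Combining this with the two descriptions $\gamma_B=R\circ\sigma^{\nu}_{i/2}=\sigma^{\mu}_{-i/2}\circ R$ from Proposition~\ref{gammagamma'eqns}, I compute
$$
\sigma^{\mu}_{-t}\bigl(\gamma_B(\sigma^{\nu}_{-t}(b))\bigr)
=\sigma^{\mu}_{-t-i/2}\bigl(R(\sigma^{\nu}_{-t}(b))\bigr)
=\sigma^{\mu}_{-i/2}\bigl(\sigma^{\mu}_{-t}(R(\sigma^{\nu}_{-t}(b)))\bigr)
=\sigma^{\mu}_{-i/2}(R(b))=\gamma_B(b).
$$
Therefore $\bigl(\nu(\,\cdot\,b)\otimes\omega\bigr)(F)=\omega(\gamma_B(b))$, which is exactly equation~\eqref{(Edefeq)} for $F$. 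By Proposition~\ref{uniqueE} (or rather by the density argument in its proof), $F=E$.

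The main obstacle here is purely bookkeeping with the complex-parameter automorphisms: one has to justify that $\sigma^{\nu}_{-t}(b)\in B_0=\mathcal D(\sigma^{\nu}_{i/2})$, that the compositions $\sigma^{\mu}_{-t}\circ\sigma^{\mu}_{-i/2}$ coincide with $\sigma^{\mu}_{-t-i/2}$ on the relevant domains, and that the intertwining $\sigma^{\mu}_{-t}\circ R=R\circ\sigma^{\nu}_t$ extends analytically as needed. All of these are routine for KMS modular groups since $\sigma^{\nu}_s\sigma^{\nu}_z=\sigma^{\nu}_{s+z}$ on $\mathcal D(\sigma^{\nu}_z)$ for real $s$, so there is no serious analytic issue.
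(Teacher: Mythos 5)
Your proof is correct and follows essentially the same route as the paper: both verify that $(\sigma^{\nu}_t\otimes\sigma^{\mu}_{-t})(E)$ satisfies the defining relation $\bigl(\nu(\,\cdot\,b)\otimes\operatorname{id}\bigr)(\cdot)=\gamma_B(b)$ by pushing the automorphisms onto the functionals via the $\sigma^{\nu}$-invariance of $\nu$, the identity $\sigma^{\mu}_{-t}=R\circ\sigma^{\nu}_t\circ R^{-1}$, and the commutation of $\sigma^{\nu}_{-t}$ with $\sigma^{\nu}_{i/2}$, then invoke the uniqueness/density argument of Proposition~\ref{uniqueE}. The only difference is presentational (slicing with $\nu(\,\cdot\,b)\otimes\omega$ rather than applying $\nu\otimes\operatorname{id}$ to $(\sigma^{\nu}_t\otimes\sigma^{\mu}_{-t})(E)(b\otimes1)$), so no further comment is needed.
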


\begin{proof}
Suppose $b\in{\mathcal D}(\sigma^{\nu}_{i/2})={\mathcal D}(\gamma_B)$ be arbitrary, and compute: 
\begin{align}
&(\nu\otimes\operatorname{id})\bigl((\sigma^{\nu}_t\otimes\sigma^{\mu}_{-t})(E)(b\otimes1)\bigr)
=(\nu\otimes\operatorname{id})\bigl((\sigma^{\nu}_t\otimes\sigma^{\mu}_{-t})
[E(\sigma^{\nu}_{-t}(b)\otimes1)]\bigr)   \notag \\
&=(\nu\otimes\operatorname{id})\bigl((\operatorname{id}\otimes\sigma^{\mu}_{-t})
[E(\sigma^{\nu}_{-t}(b)\otimes1)]\bigr)   
=\sigma^{\mu}_{-t}\bigl((\nu\otimes\operatorname{id})[E(\sigma^{\nu}_{-t}(b)\otimes1)]\bigr)
\notag \\
&=\sigma^{\mu}_{-t}\bigl(\gamma_B(\sigma^{\nu}_{-t}(b))\bigr)
=\sigma^{\mu}_{-t}\bigl((R\circ\sigma^{\nu}_{i/2}\circ\sigma^{\nu}_{-t})(b)\bigr)
=\sigma^{\mu}_{-t}\bigl((R\circ\sigma^{\nu}_{-t}\circ\sigma^{\nu}_{i/2})(b)\bigr)  \notag \\
&=(R\circ\sigma^{\nu}_t\circ R^{-1})\bigl((R\circ\sigma^{\nu}_{-t}\circ\sigma^{\nu}_{i/2})(b)\bigr)
=(R\circ\sigma^{\nu}_{i/2})(b)=\gamma_B(b)  \notag \\
&=(\nu\otimes\operatorname{id})\bigl(E(b\otimes1)\bigr).
\notag
\end{align}
In the first equality, we are using the fact that $\sigma^{\nu}_t$ is an automorphism.
In the second equality, we used $\nu\circ\sigma^{\nu}_t=\nu$. 

This is true for any $b\in{\mathcal D}(\sigma^{\nu}_{i/2})$.  By the uniqueness property observed in 
Proposition~\ref{uniqueE}, we conclude that $(\sigma^{\nu}_t\otimes\sigma^{\mu}_{-t})(E)=E$, 
for any $t\in\mathbb{R}$.
\end{proof}

Write $\sigma$ to denote the flip map on $M(B\otimes C)$.  So we will have $\sigma E
\in M(C\otimes B)$.  In the below, we wish to show that $(\gamma_C\otimes\gamma_B)(\sigma E)
=E$.  However, as of now, we do not know if the expression $(\gamma_C\otimes\gamma_B)(\sigma E)$ 
even makes sense as a bounded element.  To make sense of all this, and anticipating other future 
applications, we prove first the following lemma:

\begin{lem}\label{thelemma}
Suppose $b\in{\mathcal T}_{\nu}$ and $c\in{\mathcal T}_{\mu}$.  We have:
\begin{align}
&(\gamma_C\otimes\gamma_B)\bigl((\gamma_C^{-1}(b)\otimes\gamma_B^{-1}(c))(\sigma E)\bigr)
=E(b\otimes c),  \notag \\
&(\gamma_C\otimes\gamma_B)\bigl((\sigma E)(\gamma_C^{-1}(b)\otimes\gamma_B^{-1}(c))\bigr)
=(b\otimes c)E,  \notag \\
&(\gamma_C\otimes\gamma_B)\bigl((1\otimes\gamma_B^{-1}(c))(\sigma E)
(\gamma_C^{-1}(b)\otimes1)\bigr)=(b\otimes1)E(1\otimes c),  \notag \\
&(\gamma_C\otimes\gamma_B)\bigl((\gamma_C^{-1}(b)\otimes1)(\sigma E)
(1\otimes\gamma_B^{-1}(c))\bigr)=(1\otimes c)E(b\otimes1).  \notag
\end{align}
\end{lem}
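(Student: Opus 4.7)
I focus on the first identity; the other three follow by analogous reductions that permute the placement of the factors $(b \otimes 1)$ and $(1 \otimes c)$ relative to $E$. Since $b \in {\mathcal T}_\nu$ and $c \in {\mathcal T}_\mu$ are analytic, $\beta := \gamma_C^{-1}(b) \in {\mathcal T}_\mu$ and $\gamma := \gamma_B^{-1}(c) \in {\mathcal T}_\nu$ are well-defined (using $R({\mathcal T}_\nu) = {\mathcal T}_\mu$). Applying the flip $\sigma$ to Proposition~\ref{sepidgamma'}(4) yields $(c_0 \otimes 1)(\sigma E) = (1 \otimes \gamma_C(c_0))(\sigma E)$ for $c_0 \in {\mathcal D}(\gamma_C)$; with $c_0 = \beta$ (so $\gamma_C(\beta) = b$):
$$
(\beta \otimes \gamma)(\sigma E) = (1 \otimes \gamma)(\beta \otimes 1)(\sigma E) = (1 \otimes \gamma b)(\sigma E) = \sigma\bigl((\gamma b \otimes 1)\, E\bigr).
$$
Similarly, Proposition~\ref{sepidgamma}(2) gives $E(1 \otimes c) = E(\gamma \otimes 1)$, so $E(b \otimes c) = E(\gamma b \otimes 1)$. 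Setting $b_0 := \gamma b \in {\mathcal T}_\nu$, the first identity reduces to
$$
(\gamma_C \otimes \gamma_B)\bigl(\sigma((b_0 \otimes 1)\, E)\bigr) = E(b_0 \otimes 1). \qquad (\ast)
$$

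To verify $(\ast)$, I slice both sides with $(\operatorname{id} \otimes \mu(\cdot\, c'))$ for $c' \in {\mathcal T}_\mu$; such slices separate elements of $B \otimes C$. Combining Proposition~\ref{sepidgamma}(2) with Proposition~\ref{mudefnprop} gives the auxiliary formula $(\operatorname{id} \otimes \mu)(E(1 \otimes c_0)) = \gamma_B^{-1}(c_0)$ for $c_0 \in {\mathcal D}(\gamma_B^{-1})$; using this together with Proposition~\ref{sepidgamma}(1) and the anti-homomorphism of $\gamma_B^{-1}$, the RHS slice is
$$
(\operatorname{id} \otimes \mu(\cdot\, c'))\bigl(E(b_0 \otimes 1)\bigr) = \gamma_B^{-1}(c')\, b_0.
$$
For the LHS, the defining identity $\gamma_B(b'') = (\nu \otimes \operatorname{id})(E(b'' \otimes 1))$ shows that $\mu(\cdot\, c') \circ \gamma_B$ extends to the bounded functional $\tilde\theta \in B^*$ given by $\tilde\theta(b'') = (\nu \otimes \mu)(E(b'' \otimes c')) = \nu(\gamma_B^{-1}(c')\, b'')$. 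Therefore
$$
(\operatorname{id} \otimes \mu(\cdot\, c'))\bigl((\gamma_C \otimes \gamma_B)(F)\bigr) = \gamma_C\bigl((\operatorname{id} \otimes \tilde\theta)(F)\bigr);
$$
with $F = (1 \otimes b_0)(\sigma E)$ and $u := \gamma_B^{-1}(c')\, b_0 \in {\mathcal T}_\nu$, we get $(\operatorname{id} \otimes \tilde\theta)(F) = (\operatorname{id} \otimes \nu(u\,\cdot\,))(\sigma E)$. Using the flip of Proposition~\ref{sepidgamma'}(5), $(1 \otimes u)(\sigma E) = (\gamma_C^{-1}(u) \otimes 1)(\sigma E)$, combined with $(\operatorname{id} \otimes \nu)(\sigma E) = 1$, this evaluates to $\gamma_C^{-1}(u)$; applying $\gamma_C$ returns $u = \gamma_B^{-1}(c')\, b_0$, matching the RHS slice.

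The main obstacle is making the slicing argument rigorous for the unbounded densely-defined maps $\gamma_B, \gamma_C$: the composite functionals $\mu(\cdot\, c') \circ \gamma_B$ and $\nu(u\, \cdot\,) \circ \gamma_C^{-1}$ must be shown to extend to bounded functionals expressible directly in terms of $E$, and the ``pull-out'' identity $(\operatorname{id} \otimes \theta) \circ (\gamma_C \otimes \gamma_B) = \gamma_C \circ (\operatorname{id} \otimes \theta \circ \gamma_B)$ used above must be justified for the specific elements at hand. Analyticity of $b, c$ in the respective Tomita algebras ensures all intermediate elements (such as $\gamma b$ and $u$) lie in the necessary domains. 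The other three identities are proved by the same strategy: reduce each to a variant of $(\ast)$ using the appropriate identities from Propositions~\ref{sepidgamma} and~\ref{sepidgamma'}, then carry out a parallel slicing argument.
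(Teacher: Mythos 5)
Your proposal is correct and reaches the conclusion, but it follows a genuinely different route from the paper's proof. The paper attacks the hardest (two-sided) identity $(1\otimes c)E(b\otimes1)=\cdots$ head-on: it applies the full tensor weight $\nu\otimes\mu$, inserts $E=E^2$ to convert the outer factors into $\gamma_C(c)$ and $\gamma_B(b)$ on the inner sides, flips, and then uses the KMS property of $\mu$ and $\nu$ together with Proposition~\ref{modularauto} (i.e.\ $\sigma^{\mu}_{i}\circ\gamma_B=\gamma_C^{-1}$ and $\sigma^{\nu}_{-i}\circ\gamma_C=\gamma_B^{-1}$) to cycle the elements back to the outer positions; finally it recognizes $\nu\otimes\mu=(\nu\circ\gamma_C)\otimes(\mu\circ\gamma_B)$ and concludes by faithfulness of the weights. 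You instead first \emph{normalize}: using the commutation relations of Propositions~\ref{sepidgamma} and~\ref{sepidgamma'} and $\gamma_C\circ\gamma_C^{-1}=\operatorname{id}$ you reduce the identity to the single statement $(\ast)$ with only one element in one leg, and then verify $(\ast)$ by one-legged slices $\operatorname{id}\otimes\mu(\,\cdot\,c')$, using only $(\operatorname{id}\otimes\mu)(E)=1$, $(\nu\otimes\operatorname{id})(E)=1$ and the commutation relations --- the idempotency of $E$ and the modular identities of Proposition~\ref{modularauto} never appear explicitly, being absorbed into the already-established relations. What each approach buys: the paper's global $\nu\otimes\mu$ computation settles the two-sided case directly and leans on faithfulness of the tensor weight for the separation step; your normalization makes the equivalence of the four identities transparent and isolates the analytic content in one clean formula, at the cost of having to justify (i) that the slices $\mu(\,\cdot\,c')$, $c'\in{\mathcal T}_{\mu}$, form a separating family of (a priori unbounded) functionals --- the same looseness the paper itself indulges with $\nu(\,\cdot\,b)$, $b\in B_0$ --- and (ii) the pull-out identity for the unbounded legwise maps, which you correctly flag. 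Both proofs share the underlying delicacy of interpreting $(\gamma_C\otimes\gamma_B)$ on non-elementary tensors, so your argument is at the same level of rigor as the paper's; note only that you prove the easiest (one-sided) identity and defer the two-sided ones, whereas the paper does the reverse, so for completeness you should indicate how the slicing handles factors on both sides of $\sigma E$ simultaneously.
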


\begin{proof}
To verify the last equation, note that $(1\otimes c)E(b\otimes1)\in{\mathfrak M}_{\nu\otimes\mu}$. 
Applying $\nu\otimes\mu$, we have: 
\begin{align}
&(\nu\otimes\mu)\bigl((1\otimes c)E(b\otimes1)\bigr)=(\nu\otimes\mu)\bigl((1\otimes c)
EE(b\otimes1)\bigr)   \notag \\
&=(\nu\otimes\mu)\bigl([\gamma_C(c)\otimes1]E[1\otimes\gamma_B(b)]\bigr)  \notag \\
&=(\mu\otimes\nu)\bigl([1\otimes\gamma_C(c)](\sigma E)[\gamma_B(b)\otimes1]\bigr) \notag \\
&=(\mu\otimes\nu)\bigl([\sigma^{\mu}_{i}(\gamma_B(b))\otimes1](\sigma E)
[1\otimes\sigma^{\nu}_{-i}(\gamma_C(c))]\bigr) \notag
\end{align}
By Proposition~\ref{modularauto}, we know that $\sigma^{\mu}_{i}(\gamma_B(b))=(\gamma_C^{-1}
\circ\gamma_B^{-1})(\gamma_B(b))=\gamma_C^{-1}(b)$, and $\sigma^{\nu}_{-i}\bigl(\gamma_C(c)\bigr)
=(\gamma_B^{-1}\circ\gamma_C^{-1})\bigl(\gamma_C(c)\bigr)=\gamma_B^{-1}(c)$.  So we have:
$$
(\nu\otimes\mu)\bigl((1\otimes c)E(b\otimes1)\bigr)
=(\mu\otimes\nu)\bigl([\gamma_C^{-1}(b)\otimes1](\sigma E)
[1\otimes\gamma_B^{-1}(c)]\bigr).
$$
Since $\mu=\nu\circ\gamma_C$ and $\nu=\mu\circ\gamma_B$, we thus have:
$$
(\nu\otimes\mu)\bigl((1\otimes c)E(b\otimes1)\bigr)
=(\nu\circ\gamma_C\otimes\mu\circ\gamma_B)\bigl([\gamma_C^{-1}(b)\otimes1](\sigma E)
[1\otimes\gamma_B^{-1}(c)]\bigr).
$$
This result is true for any $b\in{\mathcal T}_{\nu}$, $c\in{\mathcal T}_{\mu}$, which are 
dense in $B$ and $C$, respectively.  Moreover, we know that the weights $\nu$, $\mu$ are 
faithful.  It follows that
$$
(1\otimes c)E(b\otimes1)=(\gamma_C\otimes\gamma_B)\bigl([\gamma_C^{-1}(b)\otimes1]
(\sigma E)[1\otimes\gamma_B^{-1}(c)]\bigr),
$$
proving the claim.  Other cases can be proved similarly.
\end{proof}

By Lemma~\ref{thelemma}, we can now prove the following result:

\begin{prop}\label{sigmaE}
We have: $(\gamma_C\otimes\gamma_B)(\sigma E)=E$ and $(\gamma_B\otimes\gamma_C)(E)
=\sigma E$.
\end{prop}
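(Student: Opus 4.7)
The plan is to derive the first identity $(\gamma_C \otimes \gamma_B)(\sigma E) = E$ directly from Lemma \ref{thelemma} by exploiting the anti-multiplicativity of $\gamma_C \otimes \gamma_B$, and then obtain the second identity $(\gamma_B \otimes \gamma_C)(E) = \sigma E$ essentially for free by applying the flip map.

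First I would take $b \in \mathcal{T}_\nu$ and $c \in \mathcal{T}_\mu$ and invoke Lemma \ref{thelemma}(4):
$$(\gamma_C \otimes \gamma_B)\bigl((\gamma_C^{-1}(b) \otimes 1)(\sigma E)(1 \otimes \gamma_B^{-1}(c))\bigr) = (1 \otimes c) E (b \otimes 1).$$
Since $\gamma_B$ and $\gamma_C$ are anti-homomorphisms (Propositions \ref{gammagamma-1} and \ref{sepidgamma'}), the tensor product map $\gamma_C \otimes \gamma_B$ reverses the order of multiplication on elementary tensors, and the outer factors $\gamma_C^{-1}(b) \otimes 1$ and $1 \otimes \gamma_B^{-1}(c)$ are sent to $b \otimes 1$ and $1 \otimes c$ respectively. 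Interpreting $(\gamma_C \otimes \gamma_B)(\sigma E)$ as the sought-after element $F \in M(B \otimes C)$, the left-hand side rearranges as $(1 \otimes c) \cdot F \cdot (b \otimes 1)$, yielding $(1 \otimes c) F (b \otimes 1) = (1 \otimes c) E (b \otimes 1)$ for all such $b$, $c$. Since $\mathcal{T}_\nu$ and $\mathcal{T}_\mu$ are dense in $B$ and $C$ and contain (or norm-approximate) bounded approximate units that converge strictly to $1$ in the respective multiplier algebras, passing to such limits in the strict topology on $M(B \otimes C)$ forces $F = E$.

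For the second identity I would use the evident intertwining $\sigma \circ (\gamma_C \otimes \gamma_B) = (\gamma_B \otimes \gamma_C) \circ \sigma$, which holds on elementary tensors and hence on the relevant closures. Applying $\sigma$ to both sides of the identity $(\gamma_C \otimes \gamma_B)(\sigma E) = E$ gives $(\gamma_B \otimes \gamma_C)(E) = \sigma E$ directly.

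The main obstacle is precisely making rigorous sense of the expression $(\gamma_C \otimes \gamma_B)(\sigma E)$, since $\gamma_B$ and $\gamma_C$ are honestly unbounded closed maps and $\sigma E$ need not lie in the natural domain of their tensor product; the ``anti-multiplicative rearrangement'' above is therefore not an a priori legal manipulation but is rather the definition that has to be justified. This is exactly where Lemma \ref{thelemma} does the heavy lifting: inserting $\gamma_C^{-1}(b)$ and $\gamma_B^{-1}(c)$ as multipliers brings $\sigma E$ into a manageable bounded regime, and the four identities of the lemma together specify a consistent candidate $F \in M(B \otimes C)$ uniquely, which is precisely $E$.
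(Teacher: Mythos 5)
Your proposal is correct and takes essentially the same route as the paper: both rely on Lemma~\ref{thelemma} to give rigorous meaning to $(\gamma_C\otimes\gamma_B)(\sigma E)$ as a multiplier that agrees with $E$ on a dense subset of $B\otimes C$, and then obtain the second identity by applying the flip. The only difference is cosmetic: the paper invokes the first two identities of the lemma, which directly define the left and right multiplier actions of the candidate element on elementary tensors $b\otimes c$ (so no approximate-unit limit is needed), whereas you invoke the fourth identity and then pass to strict limits.
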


\begin{proof}
As written, we do not know whether $(\gamma_C\otimes\gamma_B)(\sigma E)$ is bounded. 
However, for $b\in{\mathcal T}_{\nu}$ (dense in $B$) and $c\in{\mathcal T}_{\mu}$ (dense in $C$), 
we know from Lemma~\ref{thelemma} that it can be made sense as follows:
\begin{align}
\bigl[(\gamma_C\otimes\gamma_B)(\sigma E)\bigr](b\otimes c)&:=(\gamma_C\otimes\gamma_B)
\bigl((\gamma_C^{-1}(b)\otimes\gamma_B^{-1}(c))(\sigma E)\bigr)=E(b\otimes c),  \notag \\
(b\otimes c)\bigl[(\gamma_C\otimes\gamma_B)(\sigma E)\bigr]&:=(\gamma_C\otimes\gamma_B)
\bigl((\sigma E)(\gamma_C^{-1}(b)\otimes\gamma_B^{-1}(c))\bigr)=(b\otimes c)E.
\notag
\end{align}
This means that $(\gamma_C\otimes\gamma_B)(\sigma E)$ coincides with $E\in M(B\otimes C)$ 
as a left and right multiplier map, on a dense subset of $B\otimes C$.  Since $E$ is bounded, 
this implies that $(\gamma_C\otimes\gamma_B)(\sigma E)$ can be canonically extended to 
a left and right multiplier map on all of $B\otimes C$, which would mean that 
$(\gamma_C\otimes\gamma_B)(\sigma E)\in M(B\otimes C)$, and that 
$(\gamma_C\otimes\gamma_B)(\sigma E)=E$. 
By taking the flip map, we also have: $(\gamma_B\otimes\gamma_C)(E)=\sigma E$.
\end{proof}

\begin{cor}
We have: $(R^{-1}\otimes R)(\sigma E)=E$ and $(R\otimes R^{-1})(E)=\sigma E$.
\end{cor}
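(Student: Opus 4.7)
The plan is to read the corollary off Proposition~\ref{sigmaE} by factoring the anti-homomorphisms $\gamma_B,\gamma_C$ into their modular and $R$ components and then cancelling the modular piece. Using the mixed descriptions in Proposition~\ref{gammagamma'eqns}, namely $\gamma_C=\sigma^{\nu}_{i/2}\circ R^{-1}$ and $\gamma_B=\sigma^{\mu}_{-i/2}\circ R$, one obtains the clean factorization
\[
\gamma_C\otimes\gamma_B \;=\; (\sigma^{\nu}_{i/2}\otimes\sigma^{\mu}_{-i/2})\circ(R^{-1}\otimes R).
\]
Since $R$ and $R^{-1}$ are bounded ${}^*$-anti-isomorphisms, the map $R^{-1}\otimes R$ lifts to a bounded bijection $M(C\otimes B)\to M(B\otimes C)$, so $F:=(R^{-1}\otimes R)(\sigma E)$ is a bona fide element of $M(B\otimes C)$, and Proposition~\ref{sigmaE} reads $(\sigma^{\nu}_{i/2}\otimes\sigma^{\mu}_{-i/2})(F)=E$.

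Next, I would bring in Proposition~\ref{sigmasigmaE}, which says $(\sigma^{\nu}_t\otimes\sigma^{\mu}_{-t})(E)=E$ for every real $t$. Because the one-parameter group $(\sigma^{\nu}_t\otimes\sigma^{\mu}_{-t})_{t\in\mathbb{R}}$ is norm-continuous, a fixed point is automatically entire for the associated analytic generator, taking the value itself at every complex $z$; in particular $(\sigma^{\nu}_{-i/2}\otimes\sigma^{\mu}_{i/2})(E)=E$. Applying this inverse modular map to both sides of $(\sigma^{\nu}_{i/2}\otimes\sigma^{\mu}_{-i/2})(F)=E$ gives $F=E$, which is exactly the first claimed identity $(R^{-1}\otimes R)(\sigma E)=E$.

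For the second identity, I would apply the flip $\sigma$ to both sides of the first. A direct unpacking on elementary tensors shows $\sigma\circ(R^{-1}\otimes R)=(R\otimes R^{-1})\circ\sigma$, so $\sigma\bigl[(R^{-1}\otimes R)(\sigma E)\bigr]=(R\otimes R^{-1})(\sigma\sigma E)=(R\otimes R^{-1})(E)$, while the right-hand side becomes $\sigma E$. Hence $(R\otimes R^{-1})(E)=\sigma E$, as desired.

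The main obstacle is the analytic-continuation step: the invariance of $E$ under the real one-parameter group $(\sigma^{\nu}_t\otimes\sigma^{\mu}_{-t})_t$ has to be promoted to invariance under the analytic generator at $z=\pm i/2$. This is standard modular-theory folklore (fixed vectors of a norm-continuous one-parameter automorphism group are entire-analytic for the generator, with constant value), and can be quoted; if one prefers a self-contained argument, an alternative would be to test $(R^{-1}\otimes R)(\sigma E)=E$ against the dense family of slice functionals $\nu(\,\cdot\,b)\otimes\omega$, where both sides can be evaluated explicitly using the defining equation~\eqref{(Edefeq)} together with $\mu=\nu\circ R^{-1}$.
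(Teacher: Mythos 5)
Your proof is correct and essentially the same as the paper's: both reduce the claim to $(\sigma^{\nu}_{i/2}\otimes\sigma^{\mu}_{-i/2})(E)=E$ by combining Proposition~\ref{sigmaE} with the factorizations of $\gamma_B,\gamma_C$ from Proposition~\ref{gammagamma'eqns} and the invariance in Proposition~\ref{sigmasigmaE}, then flip. The only differences are cosmetic — you start from $(\gamma_C\otimes\gamma_B)(\sigma E)=E$ and peel $R$ off the inside rather than applying $R^{-1}\otimes R$ to $(\gamma_B\otimes\gamma_C)(E)=\sigma E$ — and you make explicit the analytic-continuation step (a fixed point of the norm-continuous group is fixed by the analytic generator at $z=i/2$) that the paper leaves implicit.
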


\begin{proof}
From the proposition, we know: $\sigma E=(\gamma_B\otimes\gamma_C)(E)\in M(C\otimes B)$. 
Apply $R^{-1}\otimes R$ to both sides.  By Proposition~\ref{gammagamma'eqns}, we know 
$\gamma_B=R\circ\sigma^{\nu}_{i/2}$ and $\gamma_C=R^{-1}\circ\sigma^{\mu}_{-i/2}$.  Thus we have:
$$
(R^{-1}\otimes R)(\sigma E)=(R^{-1}\otimes R)(\gamma_B\otimes\gamma_C)(E)
=(\sigma^{\nu}_{i/2}\otimes\sigma^{\mu}_{-i/2})(E)=E.
$$
For the last equality, we used the result of Proposition~\ref{sigmasigmaE}.  

By taking the flip map, we also have: $(R\otimes R^{-1})(E)=\sigma E$.
\end{proof}

\section{Examples and special cases}

\subsection{Groupoids}

As indicated in the Introduction, the theory of weak multiplier Hopf algebras is motivated by 
attempts to generalize the notion of a groupoid.  Our theory is essentially a $C^*$-algebraic 
counterpart to the weak multiplier Hopf algebra theory.  Therefore, while it is true that 
this is not fully general due to the fact that the existence condition for the separability idempotent 
element is rather strong to be compatible with some topological aspects, it remains the case 
that a certain subclass of groupoids provide us with typical examples.  See below.

\bigskip

{\bf Example~4.A.}
Let $G$ be a discrete groupoid (equipped with the discrete topology).  Set the notations for the 
unit space $G^{(0)}$, the source map $s_G:G\to G^{(0)}$, and the target map $t_G:G\to G^{(0)}$, 
as in the Introduction.  Consider the commutative $C^*$-algebras $A=C_0(G)$ and $M(A)=C_b(G)$. 
Let $s,t:C_0(G^{(0)})\to M(A)$ be the pull-back maps corresponding to $s_G$ and $t_G$, respectively. 
That is, for $f\in C_0(G^{(0)})$, we have $s(f)\in M(A)=C_b(G)$ such that $s(f)(p)
=f\bigl(s_G(p)\bigr)$, and similarly for the map $t$.  Let $B$ and $C$ be the images under the 
maps $s$ and $t$, which are $C^*$-subalgebras of $M(A)$.  We have $M(B)$ and $M(C)$ 
contained in $M(A)$ as subalgebras.

Let $E\in M(A\otimes A)=C_b(G\times G)$ be such that 
$$
E(p,q):=\left\{\begin{matrix}1 & {\text { if $s_G(p)=t_G(q)$}} \\ 0 & {\text { otherwise}}
\end{matrix}\right.
$$
Since $E(p,q)=E\bigl(s_G(p),t_G(q)\bigr)$, we observe easily that $E\in M(B\otimes C)$. 

Consider next the counting measure on $G^{(0)}$, which naturally determines a weight on 
$C_0(G^{(0)})$.  Using the pull-back maps, we can define weights $\nu$ on $B$ and $\mu$ 
on $C$.  They are faithful, tracial weights (so KMS).

To describe the $R$ map, consider $s(f)=f\circ s_G\in B$, where $f\in C_0(G^{(0)})$. 
Then we have $R(f\circ s_G)\in C$, given by $R(f\circ s_G)(p)=(f\circ t_G)(p^{-1})$. 
It is a well-defined map, because whenever $s_G(p)=s_G(q)\in G^{(0)}$, we have 
$t_G(p^{-1})=t_G(q^{-1})$.  It is clear that it is an (anti)-isomorphism.

It is not difficult to see that $(E,B,\nu)$ forms a separability triple in the sense of 
Definition~\ref{separabilitytriple}, and therefore $E$ is a separability idempotent.

\bigskip

{\bf Example~4.B.}
Consider again a discrete groupoid $G$.  Denote by $K(G)$ the space of all complex 
functions on $G$ having finite support.  In particular, for $p\in G$, define $\lambda_p\in K(G)$ 
by $\lambda_p(x):=\delta_{p,x}=\left\{\begin{matrix} 1 & {\text { if $x=p$}} \\ 
0 & {\text { otherwise}}\end{matrix}\right.$.  Note that any function $f\in K(G)$ can be 
expressed in the form $f=\sum_{p\in G}f(p)\lambda_p$.  We can give $K(G)$ a ${}^*$-algebra 
structure by letting $\lambda_p\lambda_q=\lambda_{pq}$, valid only when $pq$ is defined 
and 0 otherwise; while at the same time letting $\lambda_p^*=\lambda_{p^{-1}}$.  This is 
none other than the convolution ${}^*$-algebra structure.

Similar to the group case, one can consider the (left) regular representation of $K(G)$ on 
$l^2(G)$, given by the counting measure on $G$.  The regular representation extends 
$K(G)$ to a $C^*$-algebra $A=C^*_r(G)$.  Unless $G^{(0)}$ is a finite set, the algebra $A$ 
is in general non-unital.  However, at the level of the multiplier algebra, the unit element is 
$1=\sum\lambda_e$, where the sum is taken over all $e\in G^{(0)}$.  We will skip the details 
(left Haar system, and the like) and refer to the standard textbooks (\cite{Renbook}, \cite{Patbook}), 
as the issue at hand is more about the separability idempotents than about groupoid algebras. 
More discussion on the (quantum) groupoid aspects will be given in the authors' upcoming papers 
\cite{BJKVD_qgroupoid1}, \cite{BJKVD_qgroupoid2}. 

Note that if $e\in G^{(0)}$, we have $s_G(e)=t_G(e)=e$.  Because of this, the base algebras 
$B$ and $C$, which are generated by $\{\lambda_{s_G(p)}:p\in G\}$ and $\{\lambda_{t_G(p)}:p\in G\}$ 
respectively, will turn out to be isomorphic to $C_0(G^{(0)})$.  The counting measure on $G^{(0)}$ 
would provide the weights $\nu$ and $\mu$. Being faithful and tracial, they are KMS weights. 
Let $R:B\to C$ be given by $\lambda_{s_G(p)}\mapsto\lambda_{t_G(p^{-1})}$.  It is a well-defined 
map, which is clearly an (anti)-isomorphism.

Now consider $E:=\sum\lambda_e\otimes\lambda_e$, where the sum is taken over all $e\in G^{(0)}$. 
It is not difficult to show that it becomes a separability idempotent.

\begin{rem}
(1). Since $G$ is discrete, the topology does not play much significant role here.  At the ${}$*-algebra 
level, the two examples above can be described in the framework of weak multiplier Hopf algebras 
\cite{VDWangwha0}, \cite{VDWangwha1}.    For a general weak multiplier Hopf algebra, the discussion 
about $E$ being a separability idempotent in terms of its ``distinguished linear functionals'' ($\mu$ 
and $\nu$ in our case) can be found in section 4 of \cite{VDsepid}.

(2). There also exists the notion of a {\em measured quantum groupoid\/} by Lesieur and Enock, 
in the von Neumann algebra framework \cite{LesSMF}, \cite{EnSMF}.  It generalizes the notion of 
a groupoid, so in some special cases (for instance, when the base algebra is finite-dimensional), 
they provide a separability idempotent.  But we will postpone the discussion on all these cases 
related to ``quantum groupoids'' to our future papers \cite{BJKVD_qgroupoid1}, 
\cite{BJKVD_qgroupoid2}.
\end{rem}

\subsection{An example coming from an action groupoid}

Consider the following specific example: The group $(\mathbb{Z},+)$ acts on $\overline{\mathbb{Z}}
:=\mathbb{Z}\cup\{\infty\}$ by translation and leaving $\infty$ fixed, and let $\overline{\mathbb{Z}}
\times\mathbb{Z}$ be the corresponding action groupoid (or, transformation group groupoid). 
It can be considered as a locally compact groupoid, together with the discrete topology on 
$\mathbb{Z}$, and $\overline{\mathbb{Z}}$ being the compactification to $\infty$.  Then consider 
$G:=(\overline{\mathbb{Z}}\times\mathbb{Z})|_{\overline{\mathbb{N}}}$, the restriction of the action 
groupoid to $\overline{\mathbb{N}}\subset\overline{\mathbb{Z}}$.

$G$ is also a groupoid (the ``Cuntz groupoid'', see \cite{Renbook}, \cite{Patbook}, \cite{BCST}), 
where $G^{(0)}=\overline{\mathbb{N}}$ and equipped with the target map $t:(m,p)\mapsto m$ and the 
source map $s:(m,p)\mapsto m+p$.  We have: $(m,p)\cdot(n,q)=\delta_{m+p,n}(m,p+q)$.  It is a 
locally compact groupoid, whose topology is inherited from that of the action groupoid $\bar
{\mathbb{Z}}\times\mathbb{Z}$.  The (left) Haar system is given by the discrete measure on 
each $t$-fiber. 

We can certainly consider, as in Example~4.B above, a natural separability idempotent 
$E_0:=\sum_{m\in\overline{\mathbb{N}}}\lambda_{m,0}\otimes\lambda_{m,0}$, which is 
characterized by $B_0\cong C_0(G^{(0)})$ and $\nu_0$, the counting measure on $B_0$, 
namely, $\nu_0(\lambda_{m,0})=1$, $\forall m\in\overline{\mathbb{N}}$.  However, for this next 
example, we instead wish to consider all of $\mathbb{C}G$ as the base algebra for our (soon 
to be constructed) separability idempotent.

Consider $B:=C^*(G)=\mathbb{C}G$.  As in Example~4.B, it can be realized as a completion 
of the convolution algebra $C_c(G)$ given by the convolution product $\lambda_{m,p}\lambda_{n,q}
=\delta_{m+p,n}\lambda_{m,p+q}$ and the involution $\lambda_{m,p}^*=\lambda_{m+p,-p}$. 
As before, we are regarding the generators $\lambda_{m,p}$ as functions on $G$, given by 
$\lambda_{m,p}(n,q)=\delta_{m,n}\delta_{p,q}$.  It is known that $B$ is isomorphic to 
the $C^*$-algebra generated by the unilateral shift operator on $l^2(\mathbb{N})$, but 
we do not need this fact for our purposes (see \cite{Renbook}).

Fix a positive constant $\hbar>0$.  Define:
\begin{equation}\label{(Eexample2)}
E:=\sum_{m,p\in\mathbb{N}}e^{-(p+m)\hbar/2}(1-e^{-\hbar})\lambda_{m,p-m}\otimes\lambda_{m,p-m}.
\end{equation}
Actually, the sum is taken over all $(m,p-m)\in G$, so $t(m,p-m)=m\in\overline{\mathbb{N}}$ and 
$s(m,p-m)=p\in\overline{\mathbb{N}}$, but with the convention that $e^{-\infty}=0$.  In what follows, 
we will show that there exists a suitable weight $\nu_{\hbar}$ on $B$ so that $(E,B,\nu_{\hbar})$ 
forms a separability triple.

\begin{prop}
$E$ is self-adjoint and idempotent.
\end{prop}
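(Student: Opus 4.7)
The plan is to verify both properties by direct computation with the convolution-algebra relations $\lambda_{m,p}\lambda_{n,q}=\delta_{m+p,n}\lambda_{m,p+q}$ and $\lambda_{m,p}^{*}=\lambda_{m+p,-p}$, then handle convergence at the end. Since the generators appearing in $E$ are $\lambda_{m,p-m}$, writing these relations in terms of ``source-target'' indices will be cleaner: with $q'=p-m$ we have $m+q'=p$, so $\lambda_{m,p-m}^{*}=\lambda_{p,m-p}$, and $\lambda_{m,p-m}\lambda_{n,q-n}$ is nonzero only when $n=p$, in which case it equals $\lambda_{m,q-m}$.

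For self-adjointness, I would apply the involution termwise. Since the scalar coefficient $e^{-(p+m)\hbar/2}(1-e^{-\hbar})$ is real and symmetric in $m$ and $p$, the identity $\lambda_{m,p-m}^{*}=\lambda_{p,m-p}$ shows that each summand indexed by $(m,p)$ in $E^{*}$ equals the summand indexed by $(p,m)$ in $E$. Relabeling then gives $E^{*}=E$.

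For idempotency, I would expand
\begin{equation*}
E^{2}=\sum_{m,p,n,q}e^{-(p+m+q+n)\hbar/2}(1-e^{-\hbar})^{2}\,\lambda_{m,p-m}\lambda_{n,q-n}\otimes\lambda_{m,p-m}\lambda_{n,q-n},
\end{equation*}
use the convolution relation to collapse the sum to $n=p$, and obtain
\begin{equation*}
E^{2}=(1-e^{-\hbar})^{2}\sum_{m,q}e^{-(m+q)\hbar/2}\Bigl(\sum_{p\in\mathbb{N}}e^{-p\hbar}\Bigr)\lambda_{m,q-m}\otimes\lambda_{m,q-m}.
\end{equation*}
Evaluating the geometric series $\sum_{p\in\mathbb{N}}e^{-p\hbar}=(1-e^{-\hbar})^{-1}$ produces exactly one factor of $(1-e^{-\hbar})$ and yields $E^{2}=E$.

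The only real point needing care is giving the infinite sum defining $E$ a bona fide meaning in $M(B\otimes B)$ so that the termwise manipulations are justified. The elements $\lambda_{m,p-m}$ are matrix-unit-type partial isometries (indeed $\lambda_{m,p-m}^{*}\lambda_{m,p-m}=\lambda_{p,0}$ and $\lambda_{m,p-m}\lambda_{m,p-m}^{*}=\lambda_{m,0}$, with the $\lambda_{m,0}$ mutually orthogonal projections), so the partial sums of $E$ form a net of self-adjoint projections (by the same finite-index version of the computation above applied to truncated geometric sums), and the exponential decay of the coefficients makes the net strictly Cauchy in $M(B\otimes B)$. This convergence is what I expect to be the main, though mild, technical obstacle; once it is in place the self-adjointness and idempotency of $E$ follow by passing to the strict limit from the finite-truncation computations described above.
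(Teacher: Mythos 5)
Your computation is exactly the paper's proof: the same termwise involution $\lambda_{m,p-m}^{*}=\lambda_{p,m-p}$ with the $(m,p)$-symmetric coefficient for self-adjointness, and the same collapse to $n=p$ followed by the geometric series $\sum_{p}e^{-p\hbar}(1-e^{-\hbar})=1$ for idempotency; the convergence discussion you add is extra care the paper omits. One small inaccuracy in that side remark: the finite truncations are self-adjoint but \emph{not} projections, since truncating the $p$-sum replaces the factor $1$ by $1-e^{-(N+1)\hbar}$, so the partial sums are only approximately idempotent --- this does not affect your argument, which only needs strict convergence of the series and continuity of the product to pass the identities to the limit.
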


\begin{proof}
(1). By definition, $\lambda_{m,p-m}^*=\lambda_{m+(p-m),-(p-m)}=\lambda_{p,m-p}$.  So we have 
$$
E^*=\sum_{m,p\in\mathbb{N}}e^{-(p+m)\hbar/2}(1-e^{-\hbar})\lambda_{p,m-p}\otimes\lambda_{p,m-p}\,=\,E.
$$
(2). Since $\lambda_{m,p-m}\lambda_{n,q-n}=\delta_{m+(p-m),n}\lambda_{m,(p-m)+(q-n)}
=\delta_{p,n}\lambda_{m,q-m}$, we have:
\begin{align}
E^2&=\sum_{m,p,n,q\in\mathbb{N}}e^{-(p+m)\hbar/2}e^{-(q+n)\hbar/2}(1-e^{-\hbar})^2
\lambda_{m,p-m}\lambda_{n,q-n}\otimes\lambda_{m,p-m}\lambda_{n,q-n}  \notag \\
&=\sum_{m,p,q\in\mathbb{N}}e^{-p\hbar}e^{-(q+m)\hbar/2}(1-e^{-\hbar})^2
\lambda_{m,q-m}\otimes\lambda_{m,q-m}  \notag \\
&=\sum_{m,q\in\mathbb{N}}e^{-(q+m)\hbar/2}(1-e^{-\hbar})
\lambda_{m,q-m}\otimes\lambda_{m,q-m}\,=\,E.  \notag
\end{align}
The second equality is using the fact that $n=p$, and the third equality is because $\sum_{p\in\mathbb{N}}
e^{-p\hbar}(1-e^{-\hbar})=1$.
\end{proof}

Let $C=C^*(G)$, which will be considered as an opposite $C^*$-algebra of $B$, via the map $R:B\to C$, 
defined on the generators by $R(\lambda_{n,q})=\lambda_{n+q,-q}$.  Coming from the involution, it is 
clear that $R$ is indeed a ${}^*$-anti-isomorphism.

Next, define the weight $\nu_{\hbar}$ on $B$, whose values on the generators are 
\begin{align}\label{(nuExample2)}
&\nu_{\hbar}(\lambda_{m,0})=e^{m\hbar}(1-e^{-\hbar})^{-1}, {\text { for $m=0,1,2,\dots$}}; \notag \\
&\nu_{\hbar}(\lambda_{m,p})=0, {\text { when $p\ne0$.}}
\end{align}
So for $f\in C_c(G)$, we have: $\nu_{\hbar}(f)=\sum_{m=0}^{\infty}e^{m\hbar}(1-e^{-\hbar})^{-1}f(m,0)$. 
With the values known for the generators, it is clear that $\nu_{\hbar}$ is a proper weight on $B$.  It is 
in fact a KMS weight, together with the one-parameter group $(\sigma^{\hbar}_t)_{t\in\mathbb{R}}$, 
defined by $\sigma^{\hbar}_t(\lambda_{n,q}):=e^{-itq\hbar}\lambda_{n,q}$.  See proposition below.

\begin{prop}\label{proposition_example2}
Let $E$ be as in Equation~\eqref{(Eexample2)}, and consider the weight $\nu_{\hbar}$ on $B$ 
as in Equation~\eqref{(nuExample2)}.  Then: 
\begin{enumerate}
  \item $\nu_{\hbar}$, together with the one-parameter group $(\sigma^{\hbar}_t)_{t\in\mathbb{R}}$, 
        is a KMS weight on $B$.
  \item For each $\lambda_{n,q}$, we have: 
$E(\lambda_{n,q}\otimes1)=E(1\otimes e^{q\hbar/2}\lambda_{n+q,-q})$.
  \item $(\nu_{\hbar}\otimes\operatorname{id})(E)=1$.
  \item $(\nu_{\hbar}\otimes\operatorname{id})\bigl(E(\lambda_{n,q}\otimes1)\bigr)
        =R\bigl(\sigma^{\hbar}_{i/2}(\lambda_{n,q})\bigr)$.
\end{enumerate}
\end{prop}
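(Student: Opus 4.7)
The plan is to reduce every assertion to a direct computation on the generators $\lambda_{n,q}$ of the convolution $^*$-algebra, using the multiplication rule $\lambda_{m,p}\lambda_{n,q}=\delta_{m+p,n}\lambda_{m,p+q}$, the involution $\lambda_{n,q}^*=\lambda_{n+q,-q}$, the definitions of $\sigma^{\hbar}_t$, $\nu_{\hbar}$, $R$, and the fact that $\lambda_{m,0}$ are pairwise orthogonal projections summing strictly to $1\in M(B)$. Throughout, the weight $\nu_{\hbar}$ should first be extended from its values on the generators to a densely-defined lower semi-continuous weight on $B$ in the standard way (via the faithful representation on $\ell^2(G)$), so that $\sigma^{\hbar}_t$-invariance and closedness come for free.

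For (1), I would first verify that $\sigma^{\hbar}_t(\lambda_{n,q})=e^{-itq\hbar}\lambda_{n,q}$ extends to a norm-continuous one-parameter automorphism group of $B$ (it is implemented by a unitary group on $\ell^2(G)$ multiplying basis vectors by phases). Invariance $\nu_{\hbar}\circ\sigma^{\hbar}_t=\nu_{\hbar}$ is immediate since $\nu_{\hbar}$ is supported on the generators with $q=0$. The KMS identity need only be checked on the analytic generators $\lambda_{n,q}$: compute $\lambda_{n,q}^*\lambda_{n,q}=\lambda_{n+q,-q}\lambda_{n,q}=\lambda_{n+q,0}$, so $\nu_{\hbar}(\lambda_{n,q}^*\lambda_{n,q})=e^{(n+q)\hbar}(1-e^{-\hbar})^{-1}$; meanwhile $\sigma^{\hbar}_{i/2}(\lambda_{n,q})=e^{q\hbar/2}\lambda_{n,q}$, and $\sigma^{\hbar}_{i/2}(\lambda_{n,q})\sigma^{\hbar}_{i/2}(\lambda_{n,q})^*=e^{q\hbar}\lambda_{n,0}$, giving the same value $e^{(n+q)\hbar}(1-e^{-\hbar})^{-1}$.

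For (2), (3), (4), everything follows from a single computation: using $\lambda_{m,p-m}\lambda_{n,q}=\delta_{p,n}\lambda_{m,p-m+q}$, one gets
$$
E(\lambda_{n,q}\otimes 1)=\sum_{m\in\mathbb{N}}e^{-(n+m)\hbar/2}(1-e^{-\hbar})\,\lambda_{m,n-m+q}\otimes\lambda_{m,n-m}.
$$
For (2), compute $E(1\otimes\lambda_{n+q,-q})$ by the same rule and observe that multiplying by $e^{q\hbar/2}$ absorbs the shift in the weight $e^{-(n+q+m)\hbar/2}$ into $e^{-(n+m)\hbar/2}$, matching the right leg $\lambda_{m,n-m}$ and giving the same expression as above. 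For (3), in $(\nu_{\hbar}\otimes\operatorname{id})(E)$ only the terms with $p-m=0$ survive; those give $e^{-m\hbar}(1-e^{-\hbar})\cdot e^{m\hbar}(1-e^{-\hbar})^{-1}\lambda_{m,0}=\lambda_{m,0}$, and the sum $\sum_{m\in\mathbb{N}}\lambda_{m,0}=1$ strictly. For (4), apply $\nu_{\hbar}$ to the left leg of the expression above: only $m=n+q$ survives (so that $n-m+q=0$), producing $e^{-(n+(n+q))\hbar/2}(1-e^{-\hbar})e^{(n+q)\hbar}(1-e^{-\hbar})^{-1}\lambda_{n+q,-q}=e^{q\hbar/2}\lambda_{n+q,-q}$, which equals $R(\sigma^{\hbar}_{i/2}(\lambda_{n,q}))=R(e^{q\hbar/2}\lambda_{n,q})=e^{q\hbar/2}\lambda_{n+q,-q}$.

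The main obstacle is not any single algebraic identity (each reduces to bookkeeping with $\delta$'s and a telescoping of exponential weights) but the analytic justification: I need to argue that the formally infinite sum defining $E$ converges strictly in $M(B\otimes C)$, that $\nu_{\hbar}$ really extends to a KMS weight rather than just a positive functional on $C_c(G)$, and that applying $(\nu_{\hbar}\otimes\operatorname{id})$ to $E(\lambda_{n,q}\otimes 1)$ makes sense in the sense of the slice-map conventions of Definition~\ref{separabilitytriple}. Once those foundational points are in place via standard arguments from groupoid $C^*$-algebra theory, the four computational verifications proceed as sketched.
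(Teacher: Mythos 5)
Your computations for (2)--(4) are correct and follow essentially the same route as the paper's own proof: the same expansion $E(\lambda_{n,q}\otimes 1)=\sum_m e^{-(n+m)\hbar/2}(1-e^{-\hbar})\,\lambda_{m,n-m+q}\otimes\lambda_{m,n-m}$, the same absorption of the factor $e^{q\hbar/2}$ for (2), and the same surviving-term bookkeeping for (3) and (4). (For (3) you slice $E$ directly, whereas the paper instead deduces $(\nu_{\hbar}\otimes\operatorname{id})\bigl(E(1\otimes\lambda_{m,p})\bigr)=\lambda_{m,p}$ for every generator from part (2) together with the computation used for (4); both arguments are fine and carry the same implicit interchange-of-sum issue that you rightly flag as the real analytic content.)

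The one concrete omission is in (1): in this paper a KMS weight is by definition \emph{faithful}, and your sketch never verifies faithfulness of $\nu_{\hbar}$. Invariance and the KMS identity on the generators do not give it, and appealing to the faithful representation of $B$ on $\ell^2(G)$ does not either, since faithfulness of a representation has no bearing on faithfulness of a weight. The paper closes this point by computing, for $f=\sum_{m,p}f(m,p)\lambda_{m,p}$, that $f^*f=\sum_{m,p,q}\overline{f(m,q)}f(m,p)\lambda_{m+q,p-q}$, so that only the $p=q$ terms survive under $\nu_{\hbar}$ and $\nu_{\hbar}(f^*f)=\sum_{m,p}|f(m,p)|^2\,\nu_{\hbar}(\lambda_{m+p,0})$, a sum of strictly positive multiples of the $|f(m,p)|^2$, which vanishes only if $f=0$. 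This is a one-line repair, but it is a required part of proving (1) and should be added to your argument.
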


\begin{proof}
(1). To see if $\nu_{\hbar}$ is faithful, let $\nu_{\hbar}(f^*f)=0$, where $f=\sum_{m,p}f(m,p)\lambda_{m,p}$. 
Since 
$\lambda_{n,q}^*\lambda_{m,p}=\lambda_{n+q,-q}\lambda_{m,p}=\delta_{n,m}\lambda_{n+q,p-q}$, 
we have: 
$$
f^*f=\sum_{m,p,n,q}\overline{f(n,q)}f(m,p)\lambda_{n,q}^*\lambda_{m,p}
=\sum_{m,p,q}\overline{f(m,q)}f(m,p)\lambda_{m+q,p-q}.
$$
Note also that $\nu_{\hbar}(\lambda_{m+q,p-q})=0$ unless $p=q$.  It follows that
$$
\nu_{\hbar}(f^*f)=\sum_{m,p}\overline{f(m,p)}f(m,p)\nu_{\hbar}(\lambda_{m+p,0}).
$$
But we always have $\nu_{\hbar}(\lambda_{m+p,0})>0$, while $\overline{f(m,p)}f(m,p)\ge0$.  This 
means that $\nu_{\hbar}(f^*f)=0$ only if $f(m,p)\equiv0$, showing that $\nu_{\hbar}$ is faithful.

Meanwhile, with the $(\sigma^{\hbar}_t)$ given above, we have:
$$
\nu_{\hbar}\bigl(\sigma^{\hbar}_t(\lambda_{n,q})\bigr)=\nu_{\hbar}(e^{-itq\hbar}\lambda_{n,q})
=\left\{\begin{matrix}0 & {\text {if $q\ne0$}} \\ \nu_{\hbar}(\lambda_{n,0}) & {\text {if $q=0$}}
\end{matrix}\right.
$$
So for any $t\in\mathbb{R}$, we see that $\nu_{\hbar}\bigl(\sigma^{\hbar}_t(\lambda_{n,q})\bigr)
=\nu_{\hbar}(\lambda_{n,q})$, $\forall n,q$, showing the invariance of $\nu_{\hbar}$ under 
the $(\sigma^{\hbar}_t)$.  In addition, 
\begin{align}
\nu_{\hbar}\bigl(\sigma^{\hbar}_{i/2}(\lambda_{n,q})\sigma^{\hbar}_{i/2}(\lambda_{n,q})^*\bigr)
&=\nu_{\hbar}\bigl((e^{q\hbar/2}\lambda_{n,q})(e^{q\hbar/2}\lambda_{n+q,-q})\bigr)
=e^{q\hbar}\nu_{\hbar}(\lambda_{n,0})  \notag \\
&=e^{(q+n)\hbar}(1-e^{-\hbar})^{-1},
\notag
\end{align}
which coincides with $\nu_{\hbar}(\lambda_{n,q}^*\lambda_{n,q})=\nu_{\hbar}(\lambda_{n+q,-q}\lambda_{n,q})
=\nu_{\hbar}(\lambda_{n+q,0})$.  Therefore, by Definition~\ref{KMSweight}, we see that $\nu_{\hbar}$ 
is indeed a KMS weight.

(2). Let $E$ be as in equation~\eqref{(Eexample2)}.  Then: 
\begin{align}
E(\lambda_{n,q}\otimes1)&=\sum_{m,p}e^{-(p+m)\hbar/2}(1-e^{-\hbar})\lambda_{m,p-m}\lambda_{n,q}
\otimes\lambda_{m,p-m}  \notag \\
&=\sum_m e^{-(n+m)\hbar/2}(1-e^{-\hbar})\lambda_{m,n-m+q}\otimes\lambda_{m,n-m}
\notag
\end{align}
while
\begin{align}
E(1\otimes e^{q\hbar/2}\lambda_{n+q,-q})&=\sum_{m,p}e^{-(p+m)\hbar/2}(1-e^{-\hbar})
e^{q\hbar/2}\lambda_{m,p-m}\otimes\lambda_{m,p-m}\lambda_{n+q,-q} \notag \\
&=\sum_{m}e^{-(n+q+m)\hbar/2}(1-e^{-\hbar})e^{q\hbar/2}\lambda_{m,n+q-m}\otimes\lambda_{m,n-m} 
\notag
\end{align}
Comparing, we see that $E(\lambda_{n,q}\otimes1)=E(1\otimes e^{q\hbar/2}\lambda_{n+q,-q})$.

(3). With the definition of $\nu_{\hbar}$ given in Equation~\eqref{(nuExample2)}, we have:
\begin{align}\label{(gammamap_example2)}
&(\nu_{\hbar}\otimes\operatorname{id})\bigl[E(\lambda_{n,q}\otimes1)\bigr] \notag \\
&=(\nu_{\hbar}\otimes\operatorname{id})\bigl[\sum_m e^{-(n+m)\hbar/2}(1-e^{-\hbar})
\lambda_{m,n-m+q}\otimes\lambda_{m,n-m}\bigr] 
\notag \\
&=\sum_m e^{-(n+m)\hbar/2}(1-e^{-\hbar})e^{m\hbar}(1-e^{-\hbar})^{-1}\delta_{n-m+q,0}
\lambda_{m,n-m}=e^{q\hbar/2}\lambda_{n+q,-q}.
\end{align}
By (2), this can be realized as $(\nu_{\hbar}\otimes\operatorname{id})\bigl[E(1\otimes e^{q\hbar/2}
\lambda_{n+q,-q})\bigr]=e^{q\hbar/2}\lambda_{n+q,-q}$, which is equivalent to $(\nu_{\hbar}
\otimes\operatorname{id})\bigl[E(1\otimes\lambda_{m,p})\bigr]=\lambda_{m,p}$, true for any 
generator $\lambda_{m,p}$.  This shows that $(\nu_{\hbar}\otimes\operatorname{id})(E)=1$.

(4). By definition, we know $\sigma^{\hbar}_{i/2}(\lambda_{n,q})=e^{q\hbar/2}\lambda_{n,q}$.  
So $R\bigl(\sigma^{\hbar}_{i/2}(\lambda_{n,q})\bigr)=e^{q\hbar/2}\lambda_{n+q,-q}$.  Then 
the result obtained in equation~\eqref{(gammamap_example2)} is none other than saying 
$(\nu_{\hbar}\otimes\operatorname{id})\bigl(E(\lambda_{n,q}\otimes1)\bigr)=e^{q\hbar/2}\lambda_{n+q,-q}
=R\bigl(\sigma^{\hbar}_{i/2}(\lambda_{n,q})\bigr)$.
\end{proof}
 
By Definition~\ref{separabilitytriple}, we conclude from Proposition~\ref{proposition_example2}
that $(E,B,\nu_{\hbar})$ forms a separability triple, and therefore, $E$ is a separability idempotent. 
We may have an occasion in the future to explore this example further.

\subsection{A case when $B={\mathcal B}_0({\mathcal H})$} 

We wish to study an example that generalizes the example given in section~4 of \cite{VDsepid}. 
Consider a Hilbert space ${\mathcal H}$, possibly infinite-dimensional.  Fix an orthonormal 
basis $(\xi_j)_{j\in J}$ for ${\mathcal H}$, where $J$ is an index set.  For any 
$\mathbf{v}\in{\mathcal H}$, we may write $\mathbf{v}=\sum_{j\in J}v_j\xi_j$, where 
$v_j=\langle\mathbf{v},\xi_j\rangle$. Consider also $\operatorname{Tr}(\,\cdot\,)$, the canonical 
trace on ${\mathcal B}({\mathcal H})$. 

Define $E_0:=\sum_{i,j\in J}e_{ij}\otimes e_{ij}$, where the $e_{ij}\in{\mathcal B}({\mathcal H})$ 
are the matrix units, given by 
$$
e_{ij}(\mathbf{v}):=\langle\mathbf{v},\xi_j\rangle\xi_i=v_j\xi_i,\quad{\text { for $\mathbf{v}
\in{\mathcal H}$}}.
$$

\begin{lem}\label{lemLA}
With the notations above, we have the following results:
\begin{enumerate}
  \item For $a\in{\mathcal B}({\mathcal H})$ and $\mathbf{v}\in{\mathcal H}$, we have, for $i\in J$,  
  $(a\mathbf{v})_i=\sum_{j\in J}a_{ij}v_j$, where $a_{ij}=\langle a\xi_j,\xi_i\rangle$.
  \item For $a\in{\mathcal B}({\mathcal H})$, we can consider $a^T\in{\mathcal B}({\mathcal H})$, 
  the ``transpose'' of $a$, defined by $a^T_{ij}=a_{ji}=\langle a\xi_i,\xi_j\rangle$, for $i,j\in J$. 
  We will have: $\operatorname{Tr}(a)=\sum_{j\in J}\langle a\xi_j,\xi_j\rangle=\sum_{j\in J}a_{jj}
  =\operatorname{Tr}(a^T)$.
  \item For the (unbounded) operator $E_0$ defined above, we have: 
  $$E_0(\mathbf{v}\otimes\mathbf{w})=\sum_{i,j\in J}v_jw_j\xi_i\otimes\xi_i,\qquad
  {\text { for $\mathbf{v},\mathbf{w}\in{\mathcal H}$.}}$$
\end{enumerate}
\end{lem}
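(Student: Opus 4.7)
All three claims are essentially routine Hilbert-space bookkeeping with respect to the fixed orthonormal basis $(\xi_j)_{j\in J}$, so the plan in each case is to unwind the definitions.

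For (1), I expand $\mathbf{v} = \sum_{j} v_j \xi_j$ via Parseval, use boundedness (hence continuity) of $a$ to move $a$ inside the sum, and then take the $i$-th Fourier coefficient: $(a\mathbf{v})_i = \langle a\mathbf{v}, \xi_i\rangle = \sum_j v_j \langle a\xi_j, \xi_i\rangle = \sum_j a_{ij} v_j$. For (2), the equality $\operatorname{Tr}(a) = \sum_j \langle a\xi_j, \xi_j\rangle = \sum_j a_{jj}$ is just the definition of the canonical trace relative to the chosen basis, and $\operatorname{Tr}(a) = \operatorname{Tr}(a^T)$ is immediate since $a^T_{jj} = a_{jj}$ for every $j$.

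For (3), I compute each term of the formal sum $E_0 = \sum_{i,j} e_{ij} \otimes e_{ij}$ applied to $\mathbf{v}\otimes\mathbf{w}$: directly from $e_{ij}(\mathbf{v}) = v_j \xi_i$ I obtain $(e_{ij}\otimes e_{ij})(\mathbf{v}\otimes\mathbf{w}) = (v_j\xi_i)\otimes(w_j\xi_i) = v_j w_j\,\xi_i\otimes\xi_i$, and summing over $i,j\in J$ yields the displayed formula. The only delicate point is that $E_0$ is unbounded in the infinite-dimensional case: the scalar $\sum_j v_j w_j$ converges absolutely by Cauchy--Schwarz, but $\sum_i \xi_i\otimes\xi_i$ does not converge in norm, so the identity should be interpreted either formally or on a dense domain such as the algebraic span of $\{\xi_i\otimes\xi_j : i,j\in J\}$. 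Since the lemma statement already flags $E_0$ as unbounded, I will simply verify the formula termwise and not chase norm convergence; this interpretive subtlety is really the only obstacle, and it is a matter of reading rather than of proof.
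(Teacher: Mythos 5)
Your proposal is correct and matches the paper, whose entire proof of this lemma is the single line ``Straightforward: Basic linear algebra''; you have simply filled in the routine coordinate computations, and your closing remark about interpreting the unbounded sum $E_0$ termwise (rather than in norm) is a sensible reading of what the paper intends.
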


\begin{proof}
Straightforward: Basic linear algebra.
\end{proof}

\begin{prop}\label{E0}
Let $a\in{\mathcal B}({\mathcal H})$.  Then we have:
$$
E_0(a\otimes1)=E_0(1\otimes a^T).
$$
\end{prop}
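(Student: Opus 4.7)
The plan is to verify the equality of the two (possibly unbounded) operators by evaluating both sides on an arbitrary simple tensor $\mathbf{v}\otimes\mathbf{w}$ (really, on the dense subspace spanned by the $\xi_i\otimes\xi_j$, where everything becomes a finite sum). All of the ingredients are already packaged in Lemma~\ref{lemLA}; the proof is essentially a coordinate computation that reduces to the defining relation $a^T_{ij}=a_{ji}$.

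First, I would compute the left-hand side. Since $(a\otimes 1)(\mathbf{v}\otimes\mathbf{w})=a\mathbf{v}\otimes\mathbf{w}$, part~(3) of Lemma~\ref{lemLA} gives
\[
E_0(a\otimes 1)(\mathbf{v}\otimes\mathbf{w})=\sum_{i,j\in J}(a\mathbf{v})_j\,w_j\,\xi_i\otimes\xi_i,
\]
and then part~(1) converts $(a\mathbf{v})_j=\sum_{k\in J}a_{jk}v_k$, so the expression becomes $\sum_{i,j,k}a_{jk}v_k w_j\,\xi_i\otimes\xi_i$.

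Second, I would compute the right-hand side in the same way. Here $(1\otimes a^T)(\mathbf{v}\otimes\mathbf{w})=\mathbf{v}\otimes a^T\mathbf{w}$, and applying Lemma~\ref{lemLA}(3) followed by Lemma~\ref{lemLA}(1) (with $a$ replaced by $a^T$) together with the identity $a^T_{jk}=a_{kj}$ yields
\[
E_0(1\otimes a^T)(\mathbf{v}\otimes\mathbf{w})=\sum_{i,j\in J}v_j\,(a^T\mathbf{w})_j\,\xi_i\otimes\xi_i
=\sum_{i,j,k}a_{kj}v_j w_k\,\xi_i\otimes\xi_i.
\]
Swapping the dummy indices $j\leftrightarrow k$ in this last triple sum turns it into $\sum_{i,j,k}a_{jk}v_k w_j\,\xi_i\otimes\xi_i$, matching the left-hand side exactly.

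The only real subtlety is that $E_0$ is unbounded, so the manipulations with the infinite sum $\sum_{i,j}e_{ij}\otimes e_{ij}$ must be understood in the sense of operators on a common dense domain rather than in norm. I would avoid this difficulty by restricting attention to inputs $\mathbf{v},\mathbf{w}$ with finite support in the basis $(\xi_j)_{j\in J}$ (equivalently, to finite linear combinations of the $\xi_i\otimes\xi_j$), where each of the sums above is finite and all rearrangements are trivially justified; by density and closedness this is enough to conclude the operator equality $E_0(a\otimes 1)=E_0(1\otimes a^T)$.
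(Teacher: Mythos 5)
Your proposal is correct and follows essentially the same route as the paper's proof: both evaluate on simple tensors $\mathbf{v}\otimes\mathbf{w}$, apply parts (1) and (3) of Lemma~\ref{lemLA}, and use the relation $a^T_{ij}=a_{ji}$ to transfer $a$ from the first leg to the second; the only differences are cosmetic (you compute the two sides separately and match them, the paper writes one chain of equalities) together with your added, sensible remark about the domain of the unbounded operator $E_0$.
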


\begin{proof}
For $\mathbf{v},\mathbf{w}\in{\mathcal H}$, by using Lemma~\ref{lemLA}, we have:
\begin{align}
E_0(a\otimes1)(\mathbf{v}\otimes\mathbf{w})&=E_0(a\mathbf{v}\otimes\mathbf{w})  \notag \\
&=\sum_{i,j\in J}(a\mathbf{v})_jw_j\xi_i\otimes\xi_i
=\sum_{i,j,k\in J}(a_{jk}v_k)w_j\xi_i\otimes\xi_i  \notag \\
&=\sum_{i,j,k\in J}v_k(a^T_{kj}w_j)\xi_i\otimes\xi_i
=\sum_{i,k\in J}v_k(a^T\mathbf{w})_k\xi_i\otimes\xi_i \notag \\
&=E_0(\mathbf{v}\otimes a^T\mathbf{w}).  \notag
\end{align}
This is true for all $\mathbf{v},\mathbf{w}\in{\mathcal H}$.  So $E_0(a\otimes1)=E_0(1\otimes a^T)$.
\end{proof}

Consider $r\in{\mathcal B}({\mathcal H})$, which is invertible and $\operatorname{Tr}(r^*r)=1$. 
So $r$ is a compact operator of Hilbert--Schmidt type.  This means that $r^{-1}$ is unbounded 
in general, unless ${\mathcal H}$ is finite-dimensional.  Next, define a new operator $E$, by
\begin{equation}\label{(E)}
E:=(r\otimes1)E_0(r^*\otimes1).  
\end{equation}
To learn more about the property of $E$, we will first consider a lemma.

\begin{lem}\label{leme_ij}
Let $e_{ij}\in{\mathcal B}({\mathcal H})$ be as above.  Then we have:
\begin{enumerate}
  \item $e_{ij}e_{kl}=\delta_{jk}e_{il}$.
  \item $e_{ij}^*=e_{ji}$.
  \item For $a\in{\mathcal B}({\mathcal H})$, we have: $\sum_{j\in J}e_{ij}ae_{jl}
  =\operatorname{Tr}(a)e_{il}$.
\end{enumerate}
\end{lem}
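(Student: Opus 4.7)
All three identities reduce to direct computations using only the defining formula $e_{ij}(\mathbf{v})=\langle\mathbf{v},\xi_j\rangle\xi_i$ together with the orthonormality relations $\langle\xi_k,\xi_j\rangle=\delta_{jk}$. My approach is uniform: evaluate both sides on an arbitrary vector (for (1) and (3)) or take the inner product against an arbitrary pair of vectors (for (2)), and collapse the resulting expressions using the Kronecker deltas coming from orthonormality. None of the three parts is likely to need more than a couple of lines.

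For (1), I would compute $e_{ij}e_{kl}(\mathbf{v})=e_{ij}\bigl(\langle\mathbf{v},\xi_l\rangle\xi_k\bigr)=\langle\mathbf{v},\xi_l\rangle\langle\xi_k,\xi_j\rangle\xi_i$, which equals $\delta_{jk}e_{il}(\mathbf{v})$. For (2), I would verify
\[
\langle e_{ij}\mathbf{v},\mathbf{w}\rangle=\langle\mathbf{v},\xi_j\rangle\langle\xi_i,\mathbf{w}\rangle=\langle\mathbf{v},\overline{\langle\mathbf{w},\xi_i\rangle}\,\xi_j\rangle=\langle\mathbf{v},e_{ji}\mathbf{w}\rangle,
\]
for all $\mathbf{v},\mathbf{w}\in\mathcal{H}$, which identifies $e_{ij}^*$ with $e_{ji}$. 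For (3), I would first observe that $e_{ij}ae_{jl}(\mathbf{v})=e_{ij}\bigl(\langle\mathbf{v},\xi_l\rangle a\xi_j\bigr)=\langle\mathbf{v},\xi_l\rangle\langle a\xi_j,\xi_j\rangle\xi_i=a_{jj}\,e_{il}(\mathbf{v})$, using the notation $a_{jj}=\langle a\xi_j,\xi_j\rangle$ from Lemma~\ref{lemLA}. Summing over $j\in J$ and recalling $\operatorname{Tr}(a)=\sum_{j\in J}a_{jj}$ then yields $\sum_j e_{ij}ae_{jl}=\operatorname{Tr}(a)\,e_{il}$.

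The only genuine subtlety is in (3) when $\mathcal{H}$ is infinite-dimensional: for an arbitrary $a\in\mathcal{B}(\mathcal{H})$ the series $\sum_j a_{jj}$ need not converge, so the identity is to be read as a formal/weak statement, valid unconditionally for trace-class $a$ and, more generally, in the sense that for each fixed $\mathbf{v}\in\mathcal{H}$ the partial sums of the left-hand side evaluated at $\mathbf{v}$ reproduce the partial sums of $\operatorname{Tr}(a)e_{il}(\mathbf{v})$. This is the only point where I expect any care is needed; everything else is a one-line orthonormality argument.
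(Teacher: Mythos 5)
Your proposal is correct and follows essentially the same route as the paper: evaluate on arbitrary vectors (or pair with an arbitrary second vector for the adjoint) and collapse via orthonormality. Your added remark that the identity in (3) should be read for trace-class $a$ (as it is in fact applied, to $a=r^*r$ with $r$ Hilbert--Schmidt) is a reasonable point of care that the paper's proof passes over silently.
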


\begin{proof}
(1). Let $\mathbf{v}\in{\mathcal H}$ be arbitrary.  We have: 
$$
e_{ij}e_{kl}(\mathbf{v})=e_{ij}\bigl(e_{kl}(\mathbf{v})\bigr)=e_{ij}(v_l\xi_k)
=v_l\delta_{jk}\xi_i=\delta_{jk}e_{il}(\mathbf{v}).
$$ 

(2). Let $\mathbf{v},\mathbf{w}\in{\mathcal H}$ be arbitrary. We have:
$$
\bigl\langle e_{ij}(\mathbf{v}),\mathbf{w}\bigr\rangle=\langle v_j\xi_i,\mathbf{w}\rangle=v_jw_i
=\langle\mathbf{v},w_i\xi_j\rangle=\bigl\langle\mathbf{v},e_{ji}(\mathbf{w})\bigr\rangle.
$$

(3). Let $\mathbf{v}\in{\mathcal H}$ be arbitrary.  We have: 
\begin{align}
\sum_{j\in J}e_{ij}ae_{jl}(\mathbf{v})&=\sum_{j\in J}e_{ij}a(v_l\xi_j)=\sum_{j\in J}v_le_{ij}
\left(\sum_{k\in J}a_{kj}\xi_k\right)=\sum_{j,k\in J}v_la_{kj}e_{ij}(\xi_k)  \notag \\
&=\sum_{j,k\in J}v_la_{kj}\delta_{jk}\xi_i=\sum_{j\in J}a_{jj}v_l\xi_i
=\operatorname{Tr}(a)e_{il}(\mathbf{v}).   \notag
\end{align}
\end{proof}

By using the result of Lemma~\ref{leme_ij}, we obtain the following proposition:

\begin{prop}\label{Eprojection}
Let $E$ be defined as in \eqref{(E)} above.  Then $E\in{\mathcal B}({\mathcal H}\otimes{\mathcal H})$, 
and we have: $E^*=E=E^2$.
\end{prop}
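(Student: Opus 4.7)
The plan is to identify $E$ concretely as a rank-one projection on $\mathcal{H} \otimes \mathcal{H}$. Set $\eta := \sum_{i \in J} r\xi_i \otimes \xi_i$. Because $r$ is Hilbert--Schmidt with $\operatorname{Tr}(r^*r) = 1$, the finite partial sums of this expression form a Cauchy net in $\mathcal{H} \otimes \mathcal{H}$ (with the spatial inner product), so $\eta$ defines a vector in $\mathcal{H} \otimes \mathcal{H}$ with
$$\|\eta\|^2 = \sum_i \|r\xi_i\|^2 = \sum_i \langle r^*r\xi_i, \xi_i\rangle = \operatorname{Tr}(r^*r) = 1.$$

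First I would verify the action of $E$ on the orthonormal basis $\{\xi_k \otimes \xi_l\}$ of $\mathcal{H}\otimes\mathcal{H}$. Using the formula of Lemma~\ref{lemLA} for $E_0$ on simple tensors, a direct computation gives
$$E(\xi_k \otimes \xi_l) = \overline{r_{kl}}\,\eta,$$
while the rank-one projection $P_\eta := |\eta\rangle\langle\eta|$ acts by $P_\eta(\xi_k \otimes \xi_l) = \langle \xi_k \otimes \xi_l, \eta\rangle\, \eta = \overline{r_{kl}}\,\eta$. Thus $E$ and $P_\eta$ agree on the algebraic span of this orthonormal basis. Since $P_\eta$ is bounded (of norm $\|\eta\|^2 = 1$), $E$ extends to a bounded operator on $\mathcal{H} \otimes \mathcal{H}$, and the extension coincides with $P_\eta$. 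From this identification, $E^* = E$ and $E^2 = E$ are immediate, because $P_\eta$ is a self-adjoint projection whenever $\|\eta\| = 1$.

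Alternatively (and perhaps more in the spirit of the paper), one can derive $E^2 = E$ purely algebraically by means of Lemma~\ref{leme_ij}. Formally,
$$E^2 = (r \otimes 1)\,E_0\,(r^*r \otimes 1)\,E_0\,(r^* \otimes 1),$$
and the identity $\sum_j e_{ij}a e_{jl} = \operatorname{Tr}(a)\, e_{il}$ from Lemma~\ref{leme_ij}(3) yields $E_0(a\otimes 1)E_0 = \operatorname{Tr}(a)\, E_0$ for trace-class $a$. Applied to $a = r^*r$, whose trace is $1$, this gives $E_0(r^*r \otimes 1)E_0 = E_0$ and hence $E^2 = E$. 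Similarly, $e_{ij}^* = e_{ji}$ implies $E_0^* = E_0$ after re-indexing, whence $E^* = E$. I would present this algebraic computation as a verification once boundedness has been established via the identification with $P_\eta$.

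The main obstacle is rigor: $E_0$ is unbounded in infinite dimensions, so the sum $\sum_{i,j} e_{ij}\otimes e_{ij}$ has no meaning as a bounded operator, and the matrix-unit manipulations cannot be applied as if $E_0$ were itself well-defined. The way out is precisely the Hilbert--Schmidt hypothesis on $r$: the factor $r \otimes 1$ tames the sum, and the resulting operator is most cleanly realized as the rank-one projection onto $\eta$. That is why the first step of the proof is the explicit identification $E = P_\eta$; once this is in hand, the claims $E = E^* = E^2 \in \mathcal{B}(\mathcal{H}\otimes\mathcal{H})$ follow with essentially no additional work.
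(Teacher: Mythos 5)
Your proof is correct, but your primary argument takes a genuinely different route from the paper's. The paper works purely algebraically: it shows $E_0^*=E_0$ from $e_{ij}^*=e_{ji}$, and $E_0(r^*r\otimes1)E_0=\operatorname{Tr}(r^*r)E_0=E_0$ from the matrix-unit relations of Lemma~\ref{leme_ij}, then concludes $E^*=E=E^2$ and hence that $E$ is a bounded projection. That is exactly your ``alternative'' computation, so you have reproduced the paper's proof as a secondary verification. Your main argument --- identifying $E$ with the rank-one projection $P_\eta$ onto the unit vector $\eta=\sum_i r\xi_i\otimes\xi_i$ (which lies in ${\mathcal H}\otimes{\mathcal H}$ precisely because $r$ is Hilbert--Schmidt with $\operatorname{Tr}(r^*r)=1$) --- is not in the paper, and it buys something real: the paper's manipulation of the infinite sums defining $E_0$ is formal, and its final step (``a self-adjoint idempotent, so bounded'') presupposes that $E$ already lives in some $C^*$-algebra, whereas your identification establishes boundedness honestly before any algebra is done, and additionally shows the stronger fact that $E$ has rank one. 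Your basis computation $E(\xi_k\otimes\xi_l)=\overline{r_{kl}}\,\eta=\langle\xi_k\otimes\xi_l,\eta\rangle\,\eta$ checks out against the formula of Lemma~\ref{lemLA}\,(3) (the factor $r\otimes1$ on the left is what makes the sum over $i$ converge), and since the same identity holds on arbitrary simple tensors, the identification $E=P_\eta$ is legitimate. In short: correct, and arguably tighter than the original on the convergence issues; the trade-off is that the paper's matrix-unit computation is the one that generalizes to the weight $\nu=\operatorname{Tr}(p\,\cdot\,)$ used in the rest of the subsection, so keeping it as a verification, as you do, is the right presentation.
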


\begin{proof}
By Lemma~\ref{leme_ij}\,(2), we observe that $E_0^*=\left(\sum_{i,j\in J}e_{ij}\otimes e_{ij}\right)^*
=\sum_{i,j\in J}e_{ji}\otimes e_{ji}=E_0$.  It follows that $E^*=\bigl((r\otimes1)E_0(r^*\otimes1)\bigr)^*
=E$.

By Lemma~\ref{leme_ij}\,(1)\,\&\,(3), we see that 
\begin{align}
E_0(r^*r\otimes1)E_0&=\sum_{i,j,k,l\in J}(e_{ij}\otimes e_{ij})(r^*r\otimes1)(e_{kl}\otimes e_{kl})  \notag \\
&=\sum_{i,j,k,l\in J}e_{ij}r^*re_{kl}\otimes\delta_{jk}e_{il} =\sum_{i,j,l\in J}e_{ij}r^*re_{jl}\otimes e_{il}   \notag \\
&=\sum_{i,l\in J}\operatorname{Tr}(r^*r)e_{il}\otimes e_{il}
=\operatorname{Tr}(r^*r)E_0=E_0.  \notag
\end{align}
In the last line, we used the assumption that $\operatorname{Tr}(r^*r)=1$.  It follows that 
$E^2=(r\otimes1)E_0(r^*r\otimes1)E_0(r^*\otimes1)=(r\otimes1)E_0(r^*\otimes1)=E$.

We see that $E$ is a self-adjoint idempotent, so bounded projection in ${\mathcal B}({\mathcal H}
\otimes{\mathcal H})$.
\end{proof}

Next, define $B:={\mathcal B}_0({\mathcal H})$, the $C^*$-algebra of compact operators on ${\mathcal H}$.
Let us consider a suitable weight on $B$:

\begin{prop}\label{weightTrp}
Consider the map $\nu:=\operatorname{Tr}(r^{-1}\,\cdot\,{r^*}^{-1})=\operatorname{Tr}\bigl(p\,\cdot)$, 
where $p={r^*}^{-1}r^{-1}=(rr^*)^{-1}$.  It is a KMS weight on the $C^*$-algebra $B$. 
The associated norm-continuous automorphism group, the ``modular automorphism group'' for $\nu$, 
is $(\sigma^{\nu}_t)_{t\in\mathbb{R}}$, defined by $\sigma^{\nu}_t(b)=p^{it}bp^{-it}$.
\end{prop}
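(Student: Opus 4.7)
The plan is to recognize $\nu$ as the restriction to $B = \mathcal{B}_0(\mathcal{H})$ of the standard normal semi-finite faithful (n.s.f.) weight $\tilde{\nu} := \operatorname{Tr}(p \,\cdot\,)$ on the ambient type~I factor $\mathcal{B}(\mathcal{H})$, and then transfer the KMS data from $\tilde{\nu}$ down to $B$. First I would observe that $p = (rr^*)^{-1}$ is a densely-defined, positive, self-adjoint, unbounded operator whose inverse $p^{-1} = rr^*$ is bounded, trace-class, and injective (since $r$ is a bijection); in particular, $p^{1/2}$ has range equal to all of $\mathcal{H}$. For $a \in \mathcal{B}(\mathcal{H})^+$ one defines $\tilde{\nu}(a) := \operatorname{Tr}(p^{1/2} a p^{1/2}) \in [0, \infty]$, which agrees with $\operatorname{Tr}(r^{-1} a r^{*-1})$ by the cyclicity of $\operatorname{Tr}$. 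This is the standard construction of an n.s.f.~weight on a type~I factor, and its Tomita--Takesaki modular automorphism group is the spatial group $\tilde{\sigma}_t(a) = p^{it} a p^{-it}$.

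Next I would verify that $\nu := \tilde{\nu}|_B$ satisfies the requirements of Definition~\ref{KMSweight}. Since $p^{it}$ is unitary and $B$ is a two-sided ideal in $\mathcal{B}(\mathcal{H})$, the formula $\sigma^{\nu}_t(b) := p^{it} b p^{-it}$ defines automorphisms of $B$. Faithfulness of $\nu$ follows from the surjectivity of $p^{1/2}$: if $\nu(a) = 0$ with $0 \leq a \in B$, then $a^{1/2} p^{1/2} = 0$ on $\operatorname{dom}(p^{1/2})$, which forces $a = 0$. Lower semi-continuity is inherited from the normality of $\tilde{\nu}$. For semi-finiteness I would exhibit the subset $\{r k r^* : k \in \mathcal{B}(\mathcal{H})^+ \text{ of finite rank}\} \subseteq \mathfrak{M}_{\nu}^+$, using $\nu(rkr^*) = \operatorname{Tr}(k) < \infty$ and the fact that $r\mathcal{H}$ is dense in $\mathcal{H}$, so such elements are norm-dense in $B^+$.

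The delicate point, and the main obstacle, is the \emph{norm continuity} of $t \mapsto \sigma^{\nu}_t(b)$ for $b \in B$. On all of $\mathcal{B}(\mathcal{H})$ the group $\tilde{\sigma}_t$ is in general only strongly continuous (since $p$ is unbounded), so Definition~\ref{KMSweight} would fail had we taken $\mathcal{B}(\mathcal{H})$ in place of $B$. What rescues us is precisely that $b$ is compact: strong convergence $p^{it} \to 1$ as $t \to 0$, composed with a compact operator on either side, upgrades automatically to norm convergence (a compact operator sends the unit ball to a precompact set on which strong convergence is uniform). This yields $\|p^{it} b p^{-it} - b\| \to 0$ as $t \to 0$, and hence norm continuity of $t \mapsto \sigma^{\nu}_t(b)$ throughout $\mathbb{R}$. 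Once norm continuity is in hand, the $\sigma^{\nu}_t$-invariance $\nu \circ \sigma^{\nu}_t = \nu$ and the KMS identity $\nu(a^*a) = \nu\bigl(\sigma^{\nu}_{i/2}(a)\sigma^{\nu}_{i/2}(a)^*\bigr)$ for $a \in \mathcal{D}(\sigma^{\nu}_{i/2})$ are obtained by direct restriction of the corresponding standard identities for $\tilde{\nu}$ on $\mathcal{B}(\mathcal{H})$.
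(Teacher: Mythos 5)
Your proposal is correct, but it takes a somewhat different route from the paper's. The published proof is a bare-hands computation: faithfulness from $\operatorname{Tr}(r^{-1}b^*b{r^*}^{-1})=0\Rightarrow b{r^*}^{-1}=0\Rightarrow b=0$, and both the invariance $\nu\circ\sigma^{\nu}_t=\nu$ and the KMS identity $\nu(b^*b)=\nu\bigl(\sigma^{\nu}_{i/2}(b)\sigma^{\nu}_{i/2}(b)^*\bigr)$ by cyclicity of the trace, with lower semi-continuity and semi-finiteness dismissed as easy consequences of the properties of $\operatorname{Tr}$. You instead realize $\nu$ as the restriction to ${\mathcal B}_0({\mathcal H})$ of the standard n.s.f.\ weight $\operatorname{Tr}(p\,\cdot\,)$ on the ambient factor ${\mathcal B}({\mathcal H})$ and import the modular data from Tomita--Takesaki theory; the underlying trace identities are the same, but your version makes semi-finiteness concrete (the elements $rkr^*$ with $k$ finite rank, which are dense because $r$ has dense range) and, more importantly, isolates the one point the paper's proof passes over in silence: Definition~\ref{KMSweight} requires $t\mapsto\sigma^{\nu}_t(b)$ to be \emph{norm} continuous, and since $p$ is unbounded the spatial group $p^{it}\cdot p^{-it}$ is only strongly continuous on ${\mathcal B}({\mathcal H})$. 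Your observation that compactness of $b$ upgrades the strong convergence $p^{it}\to1$ to norm convergence of $p^{it}bp^{-it}\to b$ is exactly what is needed, and it is the genuine content that the published argument omits. (One minor imprecision: since $r^{-1}$ may be unbounded, $r$ is injective with dense range rather than a bijection of ${\mathcal H}$; this is all that is used, namely that $rr^*$ is injective and that the $rkr^*$ are dense, so nothing breaks.)
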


\begin{proof}
From the properties of $\operatorname{Tr}(\,\cdot\,)$, which is itself a proper weight, it is easy to 
see that $\nu$ is a lower semi-continuous weight and is densely-defined.  It is also faithful, because 
if $\nu(b^*b)=\operatorname{Tr}(r^{-1}b^*b{r^*}^{-1})=0$,  it necessarily means $b{r^*}^{-1}$=0, 
so $b=0$.

With the definition of the $(\sigma^{\nu}_t)$ given above and for $b\in{\mathfrak M}_{\nu}$, 
observe that $\nu\bigl(\sigma^{\nu}_t(b)\bigr)=\operatorname{Tr}(pp^{it}bp^{-it})=\operatorname{Tr}
(p^{-it}pp^{it}b)=\operatorname{Tr}(pb)=\nu(b)$, $t\in\mathbb{R}$.  This shows the invariance 
of $\nu$ under the $(\sigma^{\nu}_t)$.  While, for $b\in{\mathcal D}(\sigma^{\nu}_{i/2})$, we have:
\begin{align}
\nu\bigl(\sigma^{\nu}_{i/2}(b)\sigma^{\nu}_{i/2}(b)^*\bigr)
&=\operatorname{Tr}\bigl(p(p^{-\frac12}bp^{\frac12})(p^{-\frac12}bp^{\frac12})^*\bigr)
=\operatorname{Tr}(p^{\frac12}bpb^*p^{-\frac12})  \notag \\
&=\operatorname{Tr}(pb^*p^{-\frac12}p^{\frac12}b)=\operatorname{Tr}(pb^*b)
=\nu(b^*b).  \notag
\end{align}
By the conditions given in Definition~\ref{KMSweight}, we see that $\nu$ is a KMS weight.
\end{proof}

Let $C={\mathcal B}_0({\mathcal H})$.  Note that $M(B)=M(C)={\mathcal B}({\mathcal H})$. 
Meanwhile, by polar decomposition, write $r^*=u|r^*|$, where $|r^*|=(rr^*)^{\frac12}=p^{-\frac12}$. 
Now let $R:B\to C$ be defined by $R(b):=(ubu^*)^T$.  It is a ${}^*$-anti-isomorphism, with 
$R^{-1}(c)=u^*c^Tu$.  It will soon be made clear why we choose $R$ in this way.

With the definition given in Equation~\eqref{(E)}, we showed in Proposition~\ref{Eprojection} 
that $E$ is self-adjoint and idempotent such that $E\in{\mathcal B}({\mathcal H}\otimes{\mathcal H})
=M(B\otimes C)$.  In the next proposition, we see that $(E,B,\nu)$ indeed forms a separability triple.

\begin{prop}\label{Exseptriple}
Consider the triple $(E,B,\nu)$ as given above.  Then
\begin{enumerate}
  \item $(\nu\otimes\operatorname{id})(E)=1_{{\mathcal B}({\mathcal H})}$, where 
  $1_{{\mathcal B}({\mathcal H})}$ is the identity in $M(B)={\mathcal B}({\mathcal H})$.
  \item For $b\in{\mathcal D}(\sigma^{\nu}_{i/2})$, we have: 
  $(\nu\otimes\operatorname{id})\bigl(E(b\otimes1)\bigr)=R\bigl(\sigma^{\nu}_{i/2}(b)\bigr)$.
\end{enumerate}
By Definition \ref{separabilitytriple}, we see that $(E,B,\nu)$ forms a separability triple, 
and therefore, $E$ is a separability idempotent.
\end{prop}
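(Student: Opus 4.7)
The plan is to verify both defining conditions by direct computation, exploiting the explicit matrix-unit expansion $E_0 = \sum_{i,j} e_{ij}\otimes e_{ij}$ together with the polar decomposition $r^* = u\lvert r^*\rvert = up^{-1/2}$ (so that $r = p^{-1/2}u^*$), which was precisely what motivated the definition of $R$.

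For part (1), I would write $E = (r\otimes 1)E_0(r^*\otimes 1) = \sum_{i,j\in J} re_{ij}r^*\otimes e_{ij}$, so that applying $\nu=\mathrm{Tr}(p\,\cdot\,)$ to the first leg yields matrix coefficients $\mathrm{Tr}(pre_{ij}r^*)$. Cyclicity of the trace converts this to $\mathrm{Tr}(r^*pr\cdot e_{ij})$, and since $r$ is invertible with $p=(rr^*)^{-1}$, one reads off $r^*pr = r^*(rr^*)^{-1}r = 1$. Hence $\mathrm{Tr}(pre_{ij}r^*) = \mathrm{Tr}(e_{ij}) = \delta_{ij}$, and the sum collapses to $\sum_i e_{ii} = 1_{\mathcal B(\mathcal H)}$. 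Per the remark after Definition~\ref{separabilitytriple}, this should be interpreted as saying $\nu\bigl((\mathrm{id}\otimes\omega)(E)\bigr) = \omega(1)$ for all $\omega\in C^*$, which is exactly what the computation above gives when paired with $\omega(e_{ij})$ coefficients.

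For part (2), I would carry out the analogous computation starting from $E(b\otimes 1) = \sum_{i,j} re_{ij}r^*b\otimes e_{ij}$. Applying $\nu\otimes\mathrm{id}$ and using cyclicity,
\begin{equation*}
\nu(re_{ij}r^*b) = \mathrm{Tr}(pre_{ij}r^*b) = \mathrm{Tr}(r^*bpr\cdot e_{ij}) = (r^*bpr)_{ji},
\end{equation*}
so the resulting element of $C$ is $\sum_{i,j}(r^*bpr)_{ji}e_{ij} = (r^*bpr)^T$. Substituting the polar decomposition,
\begin{equation*}
r^*bpr \;=\; (up^{-1/2})\,b\,p\,(p^{-1/2}u^*) \;=\; u\bigl(p^{-1/2}bp^{1/2}\bigr)u^* \;=\; u\,\sigma^{\nu}_{i/2}(b)\,u^*,
\end{equation*}
where the domain hypothesis $b\in{\mathcal D}(\sigma^\nu_{i/2})$ ensures that $p^{-1/2}bp^{1/2}$ really is a bounded operator. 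Taking the transpose, $(r^*bpr)^T = (u\sigma^\nu_{i/2}(b)u^*)^T = R\bigl(\sigma^\nu_{i/2}(b)\bigr)$ by the definition of $R$, which is the desired identity.

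The principal obstacle is convergence: the formal series $E_0 = \sum_{i,j}e_{ij}\otimes e_{ij}$ is not itself a bounded operator, so the expansion $E = \sum re_{ij}r^*\otimes e_{ij}$ has to be made precise. Since $r$ is Hilbert--Schmidt with $\mathrm{Tr}(r^*r)=1$, the summands $re_{ij}r^*$ have square-summable norms, so the sum converges in the Hilbert--Schmidt (and hence $\sigma$-strong) topology on the first leg. To be safe, I would justify the identities by testing against the dense family of elementary tensors $\mathbf{v}\otimes\mathbf{w}$, exactly as in Lemma~\ref{lemLA}(3), and then commute the scalar-valued functional $\nu$ past the sum coefficient-by-coefficient. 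Once convergence is settled, the rest is the algebraic manipulation above; the polar decomposition was engineered precisely to make step (2) match the formula for $R\circ\sigma^\nu_{i/2}$.
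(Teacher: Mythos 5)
Your computation is correct and rests on the same ingredients as the paper's proof: the expansion $E=(r\otimes1)E_0(r^*\otimes1)$, the identity $r^*pr=1$ (equivalently $pr={r^*}^{-1}$), and the relation $r^*b{r^*}^{-1}=up^{-\frac12}bp^{\frac12}u^*=u\,\sigma^{\nu}_{i/2}(b)\,u^*$, whose transpose is exactly $R\bigl(\sigma^{\nu}_{i/2}(b)\bigr)$. The one substantive difference is the order in which the two legs are sliced, and that is precisely where your remaining burden lies. The paper first pairs $E$ with a bounded functional $\omega=\operatorname{Tr}(\,\cdot\,a)\in C^*$ on the second leg, producing the single concrete element $(\operatorname{id}\otimes\omega)(E)=ra^Tr^*$ (resp.\ $(\operatorname{id}\otimes\omega)(E(b\otimes1))=r(sa)^Tr^*$ with $s=(r^*b{r^*}^{-1})^T$), and only then applies $\nu$ once; since $p^{\frac12}r=u^*$ is unitary, $\nu(ra^Tr^*)=\operatorname{Tr}(u^*a^Tu)=\operatorname{Tr}(a)=\omega(1)$ is fully rigorous and simultaneously verifies the membership $(\operatorname{id}\otimes\omega)(E)\in{\mathfrak M}_{\nu}$ required by the Remark following Definition~\ref{separabilitytriple}. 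You instead apply the weight $\nu$ termwise to the infinite first-leg expansion $\sum_{i,j}re_{ij}r^*\otimes e_{ij}$ and then sum. Because $\nu$ is unbounded and only lower semicontinuous, Hilbert--Schmidt (or norm) convergence of the first leg does not by itself license commuting $\nu$ past the sum, and testing against elementary tensors $\mathbf{v}\otimes\mathbf{w}$ establishes operator identities but not weight identities; moreover $pre_{ij}r^*=\langle\,\cdot\,,r\xi_j\rangle\,{r^*}^{-1}\xi_i$ need not even be well defined termwise, since $\xi_i$ need not lie in $\operatorname{Ran}(r^*)$. The gap is easily closed either by reordering as the paper does, or by writing $\nu(\,\cdot\,)=\operatorname{Tr}(p^{\frac12}\,\cdot\,p^{\frac12})$ and observing $p^{\frac12}re_{ij}r^*p^{\frac12}=u^*e_{ij}u$, which renders your first-leg sum trace-norm convergent after pairing with a trace-class $a$ in the second leg; but as written the interchange of $\nu$ with the infinite sum is the one unjustified step.
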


\begin{proof}
(1). To be rigorous, as noted in the Remark following Definition~\ref{separabilitytriple}, we should 
understand (1) as saying that $(\operatorname{id}\otimes\omega)(E)\in{\mathfrak M}_{\nu}$ 
for $\omega\in C^*$ and that $\nu\bigl((\operatorname{id}\otimes\omega)(E)\bigr)=\omega(1)$. 
As $C={\mathcal B}_0({\mathcal H})$, any $\omega\in C^*$ may be expressed in the form 
$\omega=\operatorname{Tr}(\,\cdot\,a)$, where $a$ is a trace-class operator.  Then, 
since $(e_{ij}a)_{mn}=\delta_{m,i}a_{jn}$, we have: 
\begin{equation}\label{(TrE0)}
(\operatorname{id}\otimes\omega)(E_0)
=(\operatorname{id}\otimes\operatorname{Tr})\bigl(E_0(1\otimes a)\bigr)
=\sum_{i,j,m\in J}e_{ij}(\delta_{m,i}a_{jm})=\sum_{i,j\in J}e_{ij}a_{ji}=a^T.
\end{equation}
With $p={r^*}^{-1}r^{-1}$, it is obvious from Equation~\eqref{(TrE0)} that 
$$
\operatorname{Tr}(a^T)=\operatorname{Tr}(r^*pra^T)=\operatorname{Tr}(pra^Tr^*)
=\operatorname{Tr}\bigl(pr(\operatorname{id}\otimes\omega)(E_0)r^*\bigr).
$$
Since we can write $r(\operatorname{id}\otimes\omega)(E_0)r^*
=(\operatorname{id}\otimes\omega)\bigl((r\otimes1)E_0(r^*\otimes1)\bigr)
=(\operatorname{id}\otimes\omega)(E)$, and since $\nu=\operatorname{Tr}(p\,\cdot\,)$, 
we can now see that $(\operatorname{id}\otimes\omega)(E)\in{\mathfrak M}_{\nu}$.  
We also have:
$$
\nu\bigl((\operatorname{id}\otimes\omega)(E)\bigr)
=\operatorname{Tr}\bigl(pr(\operatorname{id}\otimes\omega)(E_0)r^*\bigr)
=\operatorname{Tr}(a^T)=\operatorname{Tr}(a)=\omega(1),
$$
proving the claim.

(2). Recall the polar decomposition $r^*=u|r^*|=up^{-\frac12}$.  For $b\in B$, note that 
$r^*b{r^*}^{-1}=u|r^*|b|r^*|^{-1}u^*=up^{-\frac12}bp^{\frac12}u^*$.  We see that $r^*b{r^*}^{-1}$ 
is bounded if and only if $p^{-\frac12}bp^{\frac12}=\sigma^{\nu}_{i/2}(b)$ is bounded.
In particular, if $b\in{\mathcal D}(\sigma^{\nu}_{i/2})$, we know that $r^*b{r^*}^{-1}$ is bounded. 

Consider an arbitrary trace-class operator $a$, and consider $\omega=\operatorname{Tr}(\,\cdot\,a)$.
For the time being, write $s=(r^*b{r^*}^{-1})^T$, which is also bounded.  As the algebra of 
trace-class operators forms an ideal in ${\mathcal B}({\mathcal H})$, it follows that $sa$ is also 
trace-class.  Then as in Equation~\eqref{(TrE0)}, we will have:
$$
(\operatorname{id}\otimes\omega)\bigl(E_0(1\otimes s)\bigr)
=(\operatorname{id}\otimes\operatorname{Tr})\bigl(E_0(1\otimes sa)\bigr)
=(sa)^T.
$$
By Proposition~\ref{E0}, this can be re-written as:
$(\operatorname{id}\otimes\omega)\bigl(E_0(s^T\otimes1)\bigr)=(sa)^T$.
With $p={r^*}^{-1}r^{-1}$ and $s=(r^*b{r^*}^{-1})^T$, it follows that
\begin{align}
&\operatorname{Tr}((sa)^T)=\operatorname{Tr}(r^*pr(sa)^T)=\operatorname{Tr}(pr(sa)^Tr^*)  \notag \\
&=\operatorname{Tr}\bigl(pr(\operatorname{id}\otimes\omega)(E_0(s^T\otimes1))r^*\bigr)
=\operatorname{Tr}\bigl(pr(\operatorname{id}\otimes\omega)(E_0(r^*b{r^*}^{-1}\otimes1))r^*\bigr)   \notag \\
&=\operatorname{Tr}\bigl(p(\operatorname{id}\otimes\omega)(E(b\otimes1))\bigr).
\notag
\end{align}
Therefore, we can conclude that $(\operatorname{id}\otimes\omega)\bigl(E(b\otimes1)\bigr)\in{\mathfrak M}_{\nu}$, 
and that 
$$
\nu\bigl((\operatorname{id}\otimes\omega)(E(b\otimes1))\bigr)=\operatorname{Tr}((sa)^T)
=\operatorname{Tr}(sa)=\omega(s)=\omega\bigl((r^*b{r^*}^{-1})^T\bigr).
$$
This is true for any $\omega\in C^*$. In other words, we have:
$$
(\nu\otimes\operatorname{id})\bigl(E(b\otimes1)\bigr)
=(r^*b{r^*}^{-1})^T=\bigl(u\sigma^{\nu}_{i/2}(b)u^*\bigr)^T
=R\bigl(\sigma^{\nu}_{i/2}(b)\bigr).
$$
\end{proof}

Since $E\in M(B\otimes C)$ is a separability triple, all the results of Section~2 and Section~3 
will hold.  Here we gather some of them, which may be useful in the future.

Consider $\mu:=\nu\circ R^{-1}$.  By general theory (see Section 3), we know that it is a KMS 
weight on $C={\mathcal B}_0({\mathcal H})$, such that $(\operatorname{id}\otimes\mu)(E)
=1_{{\mathcal B}({\mathcal H})}$.  The following proposition gathers its properties.

\begin{prop}
With the definition above, we have: $\mu=\operatorname{Tr}(q\,\cdot\,)$, where $q=\bigl((r^*r)^{-1}\bigr)^T$. 
The modular automorphism group for $\mu$ is $(\sigma^{\mu}_t)_{t\in\mathbb{R}}$, given by
$$
\sigma^{\mu}_t(c)=(R\circ\sigma^{\nu}_{-t}\circ R^{-1})(c)=q^{it}cq^{-it},\quad c\in C.
$$
\end{prop}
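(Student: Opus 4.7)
The plan is to prove both assertions by direct computation, leveraging the polar decomposition $r^* = u|r^*|$ and the explicit forms of $R$, $R^{-1}$, and $\sigma^\nu_t$ already established.

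First I would record a preliminary observation: since $r$ (and hence $r^*$) is invertible, the partial isometry $u$ appearing in the polar decomposition of $r^*$ is in fact a unitary, so $u^*u = uu^* = 1$. From $|r^*|^2 = rr^* = p^{-1}$ we get $|r^*| = p^{-1/2}$, and so $r^* = up^{-1/2}$ and $r = p^{-1/2}u^*$. A short computation then gives
\[
r^*r = up^{-1}u^*,\qquad (r^*r)^{-1} = upu^*,\qquad q = \bigl((r^*r)^{-1}\bigr)^T = (upu^*)^T.
\]

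For the first assertion, I would compute $\mu$ directly. Using $\mu = \nu \circ R^{-1}$, $R^{-1}(c) = u^*c^Tu$, and $\nu = \operatorname{Tr}(p\,\cdot\,)$, we get $\mu(c) = \operatorname{Tr}(p u^* c^T u)$. Applying cyclicity of the trace once and then the identity $\operatorname{Tr}(XY) = \operatorname{Tr}(Y^TX^T)$ (valid for trace-class $Y$ since $\operatorname{Tr}(Z) = \operatorname{Tr}(Z^T)$) yields
\[
\mu(c) = \operatorname{Tr}(upu^* c^T) = \operatorname{Tr}\bigl(c \cdot (upu^*)^T\bigr) = \operatorname{Tr}(qc),
\]
as desired. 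One small care point is to justify the manipulations on the appropriate ideal of trace-class operators; this is routine from the definition of $\nu$ and $\mu$ on ${\mathfrak M}_\nu$, ${\mathfrak M}_\mu$.

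For the modular automorphism group, the identity $\sigma^{\mu}_t = R \circ \sigma^{\nu}_{-t} \circ R^{-1}$ was already established in Section~3 (see the discussion preceding Proposition~\ref{mudefnprop}), so it only remains to evaluate this composition explicitly on $c \in C$. Using $\sigma^\nu_{-t}(b) = p^{-it}bp^{it}$ and the formulas for $R$, $R^{-1}$,
\[
\sigma^\mu_t(c) = R\bigl(p^{-it} u^* c^T u p^{it}\bigr) = \bigl(u p^{-it} u^* c^T u p^{it} u^*\bigr)^T.
\]
Applying $(AB)^T = B^T A^T$ repeatedly, this equals $(u^*)^T (p^T)^{it} u^T\, c\, (u^*)^T (p^T)^{-it} u^T$. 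Since $u^T$ is unitary and $p^T$ is positive, the functional calculus commutes with transposition, giving $q^{it} = (upu^*)^{T\,it} = (u^*)^T(p^T)^{it}u^T$. Substituting yields $\sigma^\mu_t(c) = q^{it} c q^{-it}$.

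The only mild obstacle is bookkeeping of transposes and the polar decomposition identities; no deep step is required, since the existence and KMS property of $\mu$ have already been secured by the general theory of Section~3. Everything here is essentially a matrix-unit computation, paralleling Proposition~\ref{weightTrp} for the weight $\nu$.
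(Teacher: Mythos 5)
Your proposal is correct and follows essentially the same route as the paper's own proof: compute $\mu=\nu\circ R^{-1}$ via the polar decomposition $r^*=up^{-1/2}$, use cyclicity of the trace together with $\operatorname{Tr}(Z)=\operatorname{Tr}(Z^T)$ to identify $q=(upu^*)^T=\bigl((r^*r)^{-1}\bigr)^T$, and then evaluate $R\circ\sigma^{\nu}_{-t}\circ R^{-1}$ directly, using that transposition commutes with the functional calculus of the positive operator $upu^*$. The only cosmetic difference is that the paper writes $upu^*$ as $r^{-1}{r^*}^{-1}$ along the way; the substance of the argument is identical.
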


\begin{proof}
(1). If $c\in{\mathfrak M}_{\mu}$, we will have:
$$
\mu(c)=\nu\bigl(R^{-1}(c)\bigr)=\nu(u^*c^Tu)=\operatorname{Tr}(pu^*c^Tu).
$$
From the polar decomposition $r^*=u|r^*|=up^{-\frac12}$, we have: $r=p^{-\frac12}u^*$, 
and $r^{-1}=up^{\frac12}$. So we can write: 
$$
\mu(c)=\operatorname{Tr}(p^{\frac12}pp^{-\frac12}u^*c^Tu)
=\operatorname{Tr}(up^{\frac12}pp^{-\frac12}u^*c^T)=\operatorname{Tr}(r^{-1}prc^T).
$$
Since $p={r^*}^{-1}r^{-1}$, we thus have:
$$
\mu(c)=\operatorname{Tr}(r^{-1}{r^*}^{-1}c^T)=\operatorname{Tr}\bigl((r^{-1}{r^*}^{-1}c^T)^T\bigr)
=\operatorname{Tr}\bigl(c(r^{-1}{r^*}^{-1})^T\bigr)=\operatorname{Tr}(qc),
$$
where $q=(r^{-1}{r^*}^{-1})^T=\bigl((r^*r)^{-1}\bigr)^T$.

(2). We can check the result directly, by using the characterization given in (1) and by imitating 
the proof of Proposition~\ref{weightTrp}.  Or, we can compute the following:
$$
\sigma^{\mu}_t(c)=(R\circ\sigma^{\nu}_{-t}\circ R^{-1})(c)=(up^{-it}u^*c^Tup^{it}u^*)^T
=(up^{it}u^*)^Tc(up^{-it}u^*)^T.
$$
But, $up^{it}u^*=(upu^*)^{it}=(up^{\frac12}p^{\frac12}u^*)^{it}=(r^{-1}{r^*}^{-1})^{it}
=(q^T)^{it}$.  It follows that $(up^{it}u^*)^T=q^{it}$.  Also, $(up^{-it}u^*)^T=q^{-it}$. 
In this way, we can see that $\sigma^{\mu}_t(c)=q^{it}cq^{-it}$.
\end{proof}

The next proposition describes the (densely-defined) anti-homomorphism maps $\gamma_B$, 
$\gamma_B^{-1}$, $\gamma_C$, $\gamma_C^{-1}$ that we saw in Sections~2 and 3.

\begin{prop}
\begin{enumerate}
  \item For $b\in{\mathcal D}(\sigma^{\nu}_{i/2})$, write $\gamma_B(b):=(R\circ\sigma^{\nu}_{i/2})(b)
  =(r^*b{r^*}^{-1})^T$.  We have:
$$
E(b\otimes1)=E\bigl(1\otimes\gamma_B(b)\bigr)=E\bigl(1\otimes(r^*b{r^*}^{-1})^T\bigr).
$$
  \item For $c\in\operatorname{Ran}(\gamma_B)={\mathcal D}(\sigma^{\mu}_{i/2})$, 
write $\gamma_B^{-1}(c)=(R^{-1}\circ\sigma^{\mu}_{i/2})(c)={r^*}^{-1}c^Tr^*$.  We have:
$$
E(1\otimes c)=E\bigl(\gamma_B^{-1}(c)\otimes1\bigr)=E\bigl({r^*}^{-1}c^Tr^*\otimes1\bigr).
$$
  \item For $c\in{\mathcal D}(\sigma^{\mu}_{-i/2})$, write $\gamma_C(c):=(R^{-1}
\circ\sigma^{\mu}_{-i/2})(c)=(\sigma^{\nu}_{i/2}\circ R^{-1})(c)=rc^Tr^{-1}$.  We have: 
$$
(1\otimes c)E=\bigl(\gamma_C(c)\otimes1\bigr)E=\bigl(rc^Tr^{-1}\otimes1\bigr)E.
$$
  \item For $b\in\operatorname{Ran}(\gamma_C)={\mathcal D}(\sigma^{\nu}_{-i/2})$, 
write $\gamma_C^{-1}(b)=(R\circ\sigma^{\nu}_{-i/2})(b)=(r^{-1}br)^T$.  We have:
$$
(b\otimes1)E=\bigl(1\otimes\gamma_C^{-1}(b)\bigr)E=\bigl(1\otimes(r^{-1}br)^T\bigr)E.
$$
  \item The maps $\gamma_B$, $\gamma_B^{-1}$, $\gamma_C$, $\gamma_C^{-1}$ are 
anti-homomorphisms such that 
$$
{\text {$\gamma_C\bigl(\gamma_B(b)^*\bigr)^*=b$, for $b\in{\mathcal D}(\sigma^{\nu}_{i/2})$; 
and $\gamma_B\bigl(\gamma_C(c)^*\bigr)^*=c$, for $c\in{\mathcal D}(\sigma^{\mu}_{-i/2})$.}}
$$
\end{enumerate}
\end{prop}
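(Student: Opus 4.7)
The overarching plan is to observe that everything except the explicit formulas for $\gamma_B,\gamma_B^{-1},\gamma_C,\gamma_C^{-1}$ is already an abstract consequence of the results of Sections~2 and 3, applied to the separability triple $(E,B,\nu)$ constructed in Proposition~\ref{Exseptriple}. Thus the identities $E(b\otimes1)=E(1\otimes\gamma_B(b))$, $E(1\otimes c)=E(\gamma_B^{-1}(c)\otimes1)$, $(1\otimes c)E=(\gamma_C(c)\otimes1)E$, $(b\otimes1)E=(1\otimes\gamma_C^{-1}(b))E$, the identification of the domains with $\mathcal{D}(\sigma^{\nu}_{\pm i/2})$ and $\mathcal{D}(\sigma^{\mu}_{\pm i/2})$, and the anti-homomorphism/star-conjugation assertions in (5) all follow directly from Propositions~\ref{gammagamma-1}, \ref{sepidgamma}, \ref{sepidgamma'}, \ref{gamma*}, and \ref{gammagamma'eqns}. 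The substantive new content in this proposition is therefore the explicit representation of the four maps in terms of the operator $r$, its polar decomposition, and the transpose operation.

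To extract these explicit formulas, I would make systematic use of the polar decomposition $r^{*}=u|r^{*}|=up^{-1/2}$ from the definition of $R$. Since $r$ is invertible, $u$ is genuinely unitary, and one gets $r=p^{-1/2}u^{*}$, $r^{-1}=up^{1/2}$, and ${r^{*}}^{-1}=p^{1/2}u^{*}$. With $\sigma^{\nu}_{i/2}(b)=p^{-1/2}bp^{1/2}$ from Proposition~\ref{weightTrp}, the formula for $\gamma_B$ in (1) is a one-line calculation:
\[
\gamma_B(b)=R\bigl(\sigma^{\nu}_{i/2}(b)\bigr)=\bigl(up^{-1/2}bp^{1/2}u^{*}\bigr)^{T}=\bigl(r^{*}b{r^{*}}^{-1}\bigr)^{T}.
\]
The formulas for $\gamma_C$ and $\gamma_C^{-1}$ are obtained the same way from Proposition~\ref{gammagamma'eqns}. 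For $\gamma_B^{-1}=R^{-1}\circ\sigma^{\mu}_{i/2}$, an analogous calculation is needed using $\sigma^{\mu}_{t}(c)=q^{it}cq^{-it}$ with $q=\bigl((r^{*}r)^{-1}\bigr)^{T}$; the key step is $(q^{T})^{\pm1/2}=(r^{*}r)^{\mp1/2}$, followed by simplification via $(r^{*}r)^{1/2}=up^{-1/2}u^{*}$ and $(r^{*}r)^{-1/2}=up^{1/2}u^{*}$.

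Once the four maps are identified, one may simply invoke the abstract multiplier identities in Propositions~\ref{sepidgamma} and \ref{sepidgamma'} to conclude the displayed equations. Alternatively, they can be verified directly from the structural formula $E=(r\otimes1)E_{0}(r^{*}\otimes1)$: the direct route uses Proposition~\ref{E0} together with its adjoint form $(a\otimes1)E_{0}=(1\otimes a^{T})E_{0}$. For instance, if $b\in\mathcal{D}(\sigma^{\nu}_{i/2})$, then $r^{*}b{r^{*}}^{-1}$ is bounded and
\[
E(b\otimes1)=(r\otimes1)E_{0}\bigl((r^{*}b{r^{*}}^{-1})r^{*}\otimes1\bigr)=(r\otimes1)E_{0}\bigl(1\otimes(r^{*}b{r^{*}}^{-1})^{T}\bigr)(r^{*}\otimes1),
\]
which is precisely $E\bigl(1\otimes\gamma_B(b)\bigr)$. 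The three other multiplier identities are completely parallel; (2) also follows from (1) on substituting $b=\gamma_B^{-1}(c)$.

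Part (5) is immediate: the anti-homomorphism property is Proposition~\ref{gammagamma-1} combined with Proposition~\ref{sepidgamma'}(3), and the star-conjugation identities $\gamma_C(\gamma_B(b)^{*})^{*}=b$ and $\gamma_B(\gamma_C(c)^{*})^{*}=c$ are exactly Proposition~\ref{gamma*}. The main potential obstacle lies in the bookkeeping of the second paragraph---in particular making sure that $u$ is genuinely unitary (not merely a partial isometry, which requires invertibility of $r$), and that the transpose interacts correctly with the continuous functional calculus on positive operators---but once those are handled the whole proposition reduces to a routine specialization of the abstract theory.
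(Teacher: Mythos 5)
Your proposal is correct and takes essentially the same route as the paper: the paper's entire proof of this proposition is a citation to Propositions~\ref{gammagamma-1}, \ref{sepidgamma}, \ref{sepidgamma'}, \ref{gamma*}, and \ref{gammagamma'eqns}, applied to the separability triple established in Proposition~\ref{Exseptriple}, which is exactly your primary argument. Your explicit verifications of the formulas $\gamma_B(b)=(r^*b{r^*}^{-1})^T$, $\gamma_C(c)=rc^Tr^{-1}$, etc.\ via the polar decomposition $r^*=up^{-1/2}$, and the alternative direct check from $E=(r\otimes1)E_0(r^*\otimes1)$ and Proposition~\ref{E0}, are correct supplementary detail that the paper leaves implicit.
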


\begin{proof}
See Propositions~\ref{gammagamma-1}, \ref{sepidgamma}, \ref{sepidgamma'}, \ref{gamma*}, 
\ref{gammagamma'eqns}.
\end{proof}

\section{Separability idempotents in von Neumann algebras}

\subsection{Separability idempotent in the von Neumann algebra setting}

It is possible to consider the notion of a separability idempotent in the setting of von Neumann 
algebras.  We will do this here.  Later in the section, we will also explore how the $C^*$-algebra 
approach and the von Neumann algebra approach are related.

We begin by considering a triple $(E,N,\tilde{\nu})$, where
\begin{itemize}
  \item $N$ is a von Neumann algebra;
  \item $\tilde{\nu}$ is an n.s.f.~weight on $N$, together with its associated modular automorphism 
group $(\sigma^{\tilde{\nu}}_t)$;
  \item $E$ is a self-adjoint idempotent (so a projection) element contained in the von Neumann 
algebra tensor product $N\otimes L$, where $L$ is a von Neumann algebra such that there exists 
a ${}^*$-anti-isomorphism $R:N\to L$. 
\end{itemize}

Consider the triple $(E,N,\tilde{\nu})$ as above.  As in Section~2, we consider $L$ and $R:N\to L$ 
to be implicitly understood.  We now give the definition of the separability triple:

\begin{defn}\label{vnseparabilitytriple}
We will say that $(E,N,\tilde{\nu})$ is a {\em separability triple\/}, if the following conditions hold:
\begin{enumerate}
  \item $(\tilde{\nu}\otimes\operatorname{id})(E)=1$
  \item For $b\in{\mathcal D}(\sigma^{\tilde{\nu}}_{i/2})$, we have: $(\tilde{\nu}\otimes\operatorname{id})
\bigl(E(b\otimes1)\bigr)=R\bigl(\sigma^{\tilde{\nu}}_{i/2}(b)\bigr)$.
\end{enumerate}
If $(E,N,\tilde{\nu})$ forms a separability triple, then we say $E$ is a {\em separability idempotent\/}.
\end{defn}

Let $(E,N,\tilde{\nu})$ be a separability triple.  Then, using the ${}^*$-anti-isomorphism $R:N\to L$, 
we can define the n.s.f.~weight $\tilde{\mu}$ on $L$, given by $\tilde{\mu}:=\tilde{\nu}\circ R^{-1}$. 
The arguments given in Sections 2 and 3 will all go through, including the existence of the 
densely-defined anti-homomorphisms $\gamma_N$, $\gamma_L$ and $\gamma_N^{-1}$, 
$\gamma_L^{-1}$:

\begin{prop}
We have the maps $\gamma_N=R\circ\sigma^{\tilde{\nu}}_{i/2}=\sigma^{\tilde{\mu}}_{-i/2}\circ R$ 
and $\gamma_L=\sigma^{\tilde{\nu}}_{i/2}\circ R^{-1}=R^{-1}\circ\sigma^{\tilde{\mu}}_{-i/2}$.  They 
are closed and densely-defined, injective anti-homomorphisms, having dense ranges.  We can 
also consider $\gamma_N^{-1}$ and $\gamma_L^{-1}$, having similar properties.  In addition, 
we have: 
\begin{enumerate}
  \item For $b\in{\mathcal D}(\sigma^{\tilde{\nu}}_{i/2})$, we have: $E(b\otimes1)
=E\bigl(1\otimes\gamma_N(b)\bigr)$.
  \item For $c\in{\mathcal D}(\sigma^{\tilde{\mu}}_{i/2})$, we have: $E(1\otimes c)
=E\bigl(\gamma_N^{-1}(c)\otimes1\bigr)$.
  \item For $c\in{\mathcal D}(\sigma^{\tilde{\mu}}_{-i/2})$, we have: $(1\otimes c)E
=\bigl(\gamma_L(c)\otimes1)E$.
  \item For $b\in{\mathcal D}(\sigma^{\tilde{\nu}}_{-i/2})$, we have: $(b\otimes1)E
=\bigl(1\otimes\gamma_L^{-1}(b)\bigr)E$.
\end{enumerate}
\end{prop}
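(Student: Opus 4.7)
The proof strategy is to observe that Definition~\ref{vnseparabilitytriple} is formally identical to Definition~\ref{separabilitytriple}, with the role of the $C^*$-algebra $B$ and its KMS weight $\nu$ played by the von Neumann algebra $N$ and the n.s.f.~weight $\tilde{\nu}$. The arguments carried out in Sections~2 and~3 relied only on (i) the existence of the analytic generator $\sigma_{i/2}$ with closed graph and norm-continuous parent one-parameter group, (ii) the density of the Tomita $*$-algebra inside the algebra, (iii) the uniqueness statement for $E$ modeled by the functionals $\nu(\,\cdot\,b)$, and (iv) the ${}^*$-anti-isomorphism $R$. All four ingredients are directly available in the von Neumann algebra setting: modular automorphism groups are $\sigma$-strongly continuous with closed analytic generators, the Tomita $*$-algebra ${\mathcal T}_{\tilde{\nu}}$ is $\sigma$-strong$^*$ dense in $N$, functionals of the form $\tilde{\nu}(\,\cdot\,b)$ for $b \in {\mathcal T}_{\tilde{\nu}}$ form a dense subset of the predual $N_*$, and the conjugated weight $\tilde{\mu} := \tilde{\nu}\circ R^{-1}$ is n.s.f.~on $L$ with modular group $\sigma^{\tilde{\mu}}_t = R\circ\sigma^{\tilde{\nu}}_{-t}\circ R^{-1}$ by the uniqueness of modular automorphism groups.

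The plan is thus to transplant the proofs almost verbatim. First I would derive the two alternative expressions for $\gamma_N$ and $\gamma_L$: from $\tilde{\mu} = \tilde{\nu}\circ R^{-1}$ and the identity $\sigma^{\tilde{\mu}}_t = R\circ\sigma^{\tilde{\nu}}_{-t}\circ R^{-1}$ one extends analytically in $t$ to $z = i/2$ and obtains $R\circ\sigma^{\tilde{\nu}}_{i/2} = \sigma^{\tilde{\mu}}_{-i/2}\circ R$ and $\sigma^{\tilde{\nu}}_{i/2}\circ R^{-1} = R^{-1}\circ\sigma^{\tilde{\mu}}_{-i/2}$, exactly as in Proposition~\ref{gammagamma'eqns}. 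Closedness of $\gamma_N$ and $\gamma_L$ follows from closedness of $\sigma^{\tilde{\nu}}_{i/2}$ composed with the bijective ${}^*$-anti-isomorphism $R$; dense domain and dense range both reduce to the density of ${\mathcal T}_{\tilde{\nu}}$ in $N$ together with $\sigma^{\tilde{\nu}}_{i/2}({\mathcal T}_{\tilde{\nu}}) = {\mathcal T}_{\tilde{\nu}}$; and injectivity is immediate since each factor is invertible as a map. The anti-homomorphism property on analytic elements repeats the argument of Proposition~\ref{gammagamma-1}.

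For the four identities, I would follow the template of Proposition~\ref{sepidgamma} and Proposition~\ref{sepidgamma'}. Take (1): for $b,b' \in {\mathcal D}(\sigma^{\tilde{\nu}}_{i/2})$, compute $\bigl(\tilde{\nu}(\,\cdot\,b')\otimes\operatorname{id}\bigr)\bigl(E(b\otimes1)\bigr) = \gamma_N(bb') = \gamma_N(b')\gamma_N(b)$, then rewrite the right-hand side as $\bigl(\tilde{\nu}(\,\cdot\,b')\otimes\operatorname{id}\bigr)\bigl(E(1\otimes\gamma_N(b))\bigr)$. Since $\{\tilde{\nu}(\,\cdot\,b') : b' \in {\mathcal D}(\sigma^{\tilde{\nu}}_{i/2})\}$ is dense in $N_*$ and $\tilde{\nu}$ is faithful, conclude $E(b\otimes1) = E(1\otimes\gamma_N(b))$. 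Item (2) follows by substituting $b = \gamma_N^{-1}(c)$. For (3) and (4), take adjoints of the identities obtained in (1) and (2) and use Proposition~\ref{gamma*}'s analogue, together with the relations $\gamma_N(b)^* = \gamma_L^{-1}(b^*)$ and $\gamma_L(c)^* = \gamma_N^{-1}(c^*)$ that drop out of the $R$-twist on both sides.

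The one point requiring care—though not a genuine obstacle—is verifying that the density statement $\{\tilde{\nu}(\,\cdot\,b) : b \in {\mathcal T}_{\tilde{\nu}}\}$ is dense in $N_*$, because this is the step that lets the faithfulness argument conclude operator identities rather than just scalar equalities. This is a standard consequence of the modular theory of n.s.f.~weights (for instance via GNS: such functionals arise as $\langle\pi_{\tilde{\nu}}(\cdot)\Lambda_{\tilde{\nu}}(b), \Lambda_{\tilde{\nu}}(b)\rangle$-type vectors, and $\Lambda_{\tilde{\nu}}({\mathcal T}_{\tilde{\nu}})$ is dense in ${\mathcal H}_{\tilde{\nu}}$), so no new argument is needed, only citation of the standard weight theory (for instance \cite{Str}, \cite{Tk2}).
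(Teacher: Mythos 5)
Your proposal is correct and follows exactly the route the paper intends: the paper gives no separate proof of this proposition, simply asserting that ``the arguments given in Sections 2 and 3 will all go through,'' and your write-up is precisely a careful verification of that assertion, with the right adaptations (predual $N_*$ in place of $B^*$, $\sigma$-strong${}^*$ density of ${\mathcal T}_{\tilde{\nu}}$, and $\sigma^{\tilde{\mu}}_t=R\circ\sigma^{\tilde{\nu}}_{-t}\circ R^{-1}$). If anything you supply more detail than the paper does, and all of it checks out.
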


We can also show that $E$ is full.  The proof is done in essentially the same way as in 
Proposition~\ref{Efull}.  The only difference is that while the sets were norm dense in the 
$C^*$-algebra setting, now the sets are strongly${}^*$-dense in the current setting:

\begin{prop}\label{vnEfull}
The separability idempotent $E\in N\otimes L$ is ``full'', in the sense that

$\bigl\{(\theta\otimes\operatorname{id})(E(b\otimes1)):b\in N,\theta\in N_*\bigr\}$ is dense in $L$,

$\bigl\{(\theta\otimes\operatorname{id})((b\otimes1)E):b\in N,\theta\in N_*\bigr\}$ is dense in $L$,

$\bigl\{(\operatorname{id}\otimes\omega)((1\otimes c)E):c\in L,\omega\in L_*\bigr\}$ is dense in $N$,

$\bigl\{(\operatorname{id}\otimes\omega)(E(1\otimes c)):c\in L,\omega\in L_*\bigr\}$ is dense in $N$.
\end{prop}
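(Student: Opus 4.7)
The plan is to mimic the proof of Proposition~\ref{Efull} essentially verbatim, replacing the norm topology by the $\sigma$-weak (equivalently, $\sigma$-strong${}^*$) topology, and replacing $B^*,C^*$ by the preduals $N_*,L_*$. The preceding proposition hands us four closed, densely-defined, injective anti-homomorphisms $\gamma_N,\gamma_L,\gamma_N^{-1},\gamma_L^{-1}$, all with dense ranges and whose domains are closed under multiplication (the analogue of Proposition~\ref{gammagamma-1} goes through, since the proof used only the anti-homomorphism structure and the fact that analytic elements form an algebra). We also have the auxiliary n.s.f.\ weight $\tilde\mu:=\tilde\nu\circ R^{-1}$ on $L$, with its own Tomita algebra $\mathcal{T}_{\tilde\mu}$.

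For the first of the four containments, I would fix $b_1\in\mathcal{D}(\gamma_N)$ and choose $b_2\in\mathcal{T}_{\tilde\nu}$. Because $\mathcal{T}_{\tilde\nu}\subseteq\mathcal{D}(\sigma^{\tilde\nu}_{z})$ for every $z\in\mathbb{C}$, and because $\mathcal{D}(\gamma_N)=\mathcal{D}(\sigma^{\tilde\nu}_{i/2})$ is a right module over $\mathcal{T}_{\tilde\nu}$, one has $b_1b_2\in\mathcal{D}(\gamma_N)$. Setting $\theta:=\tilde\nu(\,\cdot\,b_2)\in N_*$, exactly the same two-line calculation as in Proposition~\ref{Efull} gives
$$
(\theta\otimes\operatorname{id})\bigl(E(b_1\otimes 1)\bigr)=(\tilde\nu\otimes\operatorname{id})\bigl(E(b_1b_2\otimes 1)\bigr)=\gamma_N(b_1b_2).
$$
As $b_1,b_2$ range over dense subsets and $\operatorname{Ran}(\gamma_N)$ is dense in $L$, the collection $\{\gamma_N(b_1b_2)\}$ is dense in $L$, establishing the first statement. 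The other three containments are proved identically using, respectively, $(\gamma_L,\mathcal{T}_{\tilde\mu})$, $(\gamma_N^{-1},\mathcal{T}_{\tilde\mu})$, and $(\gamma_L^{-1},\mathcal{T}_{\tilde\nu})$, together with the densely-defined counterpart relations $(1\otimes c)E=(\gamma_L(c)\otimes 1)E$ and $(b\otimes 1)E=(1\otimes\gamma_L^{-1}(b))E$.

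The main obstacle, which was glossed over in the $C^*$ proof but must be faced more carefully in the von Neumann setting, is ensuring that $\theta=\tilde\nu(\,\cdot\,b_2)$ genuinely extends to an element of $N_*$, so that the slice map $\theta\otimes\operatorname{id}$ is well-defined on $N\bar\otimes L$. This is handled through the GNS picture: for $a,c\in\mathcal{T}_{\tilde\nu}$ one has the identity $\tilde\nu(c^{*}\,\cdot\,a)=\omega_{\Lambda_{\tilde\nu}(a),\Lambda_{\tilde\nu}(c)}\in N_*$, and a brief KMS-type manipulation (Lemma~\ref{lemKMS}, applied with the analytic element $\sigma^{\tilde\nu}_{-i}(c^*)$) rewrites this as $\tilde\nu(\,\cdot\,b_2)$ for $b_2=a\,\sigma^{\tilde\nu}_{-i}(c^*)\in\mathcal{T}_{\tilde\nu}$. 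Norm density of such functionals in $N_*$ then follows from density of $\Lambda_{\tilde\nu}(\mathcal{T}_{\tilde\nu})$ in the GNS Hilbert space. Once this predual bookkeeping is in place, the rest of the argument is purely formal and mirrors Proposition~\ref{Efull}.
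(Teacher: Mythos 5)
Your proposal is correct and follows essentially the same route as the paper, which itself proves this proposition simply by declaring it "done in essentially the same way as Proposition~\ref{Efull}," with norm density replaced by strong${}^*$ (equivalently, for these convex sets, $\sigma$-weak) density. Your additional verification that the functionals $\tilde\nu(\,\cdot\,b_2)$, $b_2=a\,\sigma^{\tilde\nu}_{-i}(c^*)$ with $a,c\in\mathcal{T}_{\tilde\nu}$, lie in $N_*$ and are plentiful there is a useful piece of bookkeeping the paper leaves implicit, but it does not change the argument.
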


Other results in Sections 2 and 3 also have corresponding results, including the following: 

\begin{prop}\label{EresultsVN}
Let the notation be as above, and as before, denote by $\sigma$ the flip map on $N\otimes L$. 
Then
\begin{enumerate}
  \item $(\sigma^{\tilde{\nu}}_t\otimes\sigma^{\tilde{\mu}}_{-t})(E)=E$, for all $t\in\mathbb{R}$.
  \item $(\gamma_L\otimes\gamma_N)(\sigma E)=E$ and $(\gamma_N\otimes\gamma_L)(E)
=\sigma E$.
  \item $(R^{-1}\otimes R)(\sigma E)=E$ and $(R\otimes R^{-1})(E)=\sigma E$.
\end{enumerate}
\end{prop}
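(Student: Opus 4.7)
The plan is to carry the arguments of Propositions~\ref{sigmasigmaE}, \ref{sigmaE}, and the corollary following it directly over from the $C^*$-algebra setting to the von Neumann setting. Essentially all three parts are structurally the same; the only systematic change is that norm density is replaced by strong${}^*$ density, and ${\mathcal T}_\nu$, ${\mathcal T}_\mu$ are replaced by their von Neumann counterparts ${\mathcal T}_{\tilde{\nu}}$, ${\mathcal T}_{\tilde{\mu}}$, which are strongly${}^*$ dense in $N$ and $L$ respectively. Before anything else, one needs the von Neumann analog of Proposition~\ref{uniqueE}: the separability idempotent $E$ is uniquely determined by $(N,\tilde{\nu},R)$ via
$$
\bigl(\,\tilde{\nu}(\,\cdot\,b)\otimes\omega\bigr)(E)=\omega\bigl(\gamma_N(b)\bigr),
\qquad b\in{\mathcal D}(\sigma^{\tilde{\nu}}_{i/2}),\ \omega\in L_*,
$$
which holds because $\bigl\{\tilde{\nu}(\,\cdot\,b):b\in{\mathcal D}(\sigma^{\tilde{\nu}}_{i/2})\bigr\}$ is dense in $N_*$.

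For (1), I would repeat the calculation in the proof of Proposition~\ref{sigmasigmaE} verbatim: apply $(\tilde{\nu}\otimes\operatorname{id})\bigl((\sigma^{\tilde{\nu}}_t\otimes\sigma^{\tilde{\mu}}_{-t})(E)(b\otimes1)\bigr)$ for $b\in{\mathcal D}(\sigma^{\tilde{\nu}}_{i/2})$, slide $\sigma^{\tilde{\nu}}_t$ onto $b$ using $\tilde{\nu}\circ\sigma^{\tilde{\nu}}_t=\tilde{\nu}$, and pull out $\sigma^{\tilde{\mu}}_{-t}$. Using $\gamma_N=\sigma^{\tilde{\mu}}_{-i/2}\circ R$ together with the commutativity of $\sigma^{\tilde{\nu}}$ with itself at different parameters, everything simplifies to $\gamma_N(b)=(\tilde{\nu}\otimes\operatorname{id})\bigl(E(b\otimes1)\bigr)$. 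The uniqueness statement above then forces $(\sigma^{\tilde{\nu}}_t\otimes\sigma^{\tilde{\mu}}_{-t})(E)=E$.

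For (2), I would prove a von Neumann version of Lemma~\ref{thelemma}: for $b\in{\mathcal T}_{\tilde{\nu}}$, $c\in{\mathcal T}_{\tilde{\mu}}$, the element $(1\otimes c)E(b\otimes1)\in{\mathfrak M}_{\tilde{\nu}\otimes\tilde{\mu}}$. Computing $(\tilde{\nu}\otimes\tilde{\mu})\bigl((1\otimes c)E(b\otimes1)\bigr)$ by inserting $E=EE$, splitting via Proposition~\ref{sepidgamma} and the analogous $(1\otimes c)E=(\gamma_L(c)\otimes 1)E$ from (3) of the proposition preceding~\ref{vnEfull}, flipping to the $(\mu\otimes\nu)$ order, and invoking Proposition~\ref{modularauto} to rewrite $\sigma^{\tilde{\mu}}_i\circ\gamma_N=\gamma_L^{-1}$ and $\sigma^{\tilde{\nu}}_{-i}\circ\gamma_L=\gamma_N^{-1}$, yields the identity $(1\otimes c)E(b\otimes1)=(\gamma_L\otimes\gamma_N)\bigl((\gamma_L^{-1}(b)\otimes1)(\sigma E)(1\otimes\gamma_N^{-1}(c))\bigr)$, together with the other three variants. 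From this one interprets $(\gamma_L\otimes\gamma_N)(\sigma E)$ as a left-and-right multiplier on the strong${}^*$-dense subset ${\mathcal T}_{\tilde{\nu}}\otimes{\mathcal T}_{\tilde{\mu}}$ of $N\otimes L$ coinciding with $E$, and since $E$ is bounded, this extends to show $(\gamma_L\otimes\gamma_N)(\sigma E)=E$ in $N\otimes L$. Flipping both legs gives $(\gamma_N\otimes\gamma_L)(E)=\sigma E$.

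For (3), I would combine (1) and (2). From $(\gamma_N\otimes\gamma_L)(E)=\sigma E$ and the factorizations $\gamma_N=R\circ\sigma^{\tilde{\nu}}_{i/2}$, $\gamma_L=R^{-1}\circ\sigma^{\tilde{\mu}}_{-i/2}$ (the von Neumann analog of Proposition~\ref{gammagamma'eqns}), applying $R^{-1}\otimes R$ to both sides yields $(R^{-1}\otimes R)(\sigma E)=(\sigma^{\tilde{\nu}}_{i/2}\otimes\sigma^{\tilde{\mu}}_{-i/2})(E)$, which by (1) (extended to $t=i/2$, justified because $E$ is fixed by the entire one-parameter group $(\sigma^{\tilde{\nu}}_t\otimes\sigma^{\tilde{\mu}}_{-t})$) equals $E$. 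Flipping gives $(R\otimes R^{-1})(E)=\sigma E$. The main obstacle is the same as in Proposition~\ref{sigmaE}: giving rigorous meaning to expressions like $(\gamma_L\otimes\gamma_N)(\sigma E)$, where each factor is unbounded. The lemma step circumvents this by interpreting the expression as a multiplier and using boundedness of $E$ together with strong${}^*$ density of the Tomita $*$-algebras to extend.
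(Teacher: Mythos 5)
Your proposal is correct and follows essentially the same route as the paper, whose entire proof of this proposition is a reference back to Propositions~\ref{sigmasigmaE} and \ref{sigmaE}: the arguments there (including Lemma~\ref{thelemma} and the corollary) are meant to be transported verbatim to the von Neumann setting, with norm density replaced by strong${}^*$ density and the Tomita algebras ${\mathcal T}_{\tilde{\nu}}$, ${\mathcal T}_{\tilde{\mu}}$ in place of ${\mathcal T}_{\nu}$, ${\mathcal T}_{\mu}$, exactly as you describe.
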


\begin{proof}
See Propositions \ref{sigmasigmaE} and \ref{sigmaE}.
\end{proof}

\subsection{Comparison with the $C^*$-algebraic notion}

The results above seem to indicate that the $C^*$-algebra approach and the von Neumann algebra 
approach to the separability idempotent are equivalent.  To prove this, let us first begin with the 
$C^*$-algebra framework.

Suppose $(E,B,\nu)$ is a separability triple in the $C^*$-algebra setting, with the $C^*$-algebra $C$ 
and the ${}^*$-ani-isomorphism $R:B\to C$ understood, and $\mu=\nu\circ R^{-1}$.  Let $\pi_{\nu}$ and 
$\pi_{\mu}$ be the GNS representations of $B$ and $C$, on ${\mathcal H}_{\nu}$ and ${\mathcal H}_{\mu}$, 
respectively.  For convenience, we may regard $B=\pi_{\nu}(B)\,\subseteq{\mathcal B}({\mathcal H}_{\nu})$, 
and similarly $C=\pi_{\mu}(C)\,\subseteq{\mathcal B}({\mathcal H}_{\mu})$.  Then we have 
$E=(\pi_{\nu}\otimes\pi_{\mu})(E)\,\subseteq{\mathcal B}({\mathcal H}_{\nu}\otimes{\mathcal H}_{\mu})$.

Consider $N:=\pi_{\nu}(B)''$.  Denote by $\tilde{\nu}$ the weight on $N$ lifted from $\nu$.  Since $\nu$ is 
a KMS weight, we know that $\tilde{\nu}$ should be an n.s.f. weight on $N$.  Similarly, define $L:=\pi_{\mu}(C)''$.  
We can perform the lifting of $\mu$ and obtain $\tilde{\mu}$, an n.s.f. weight on $L$.  We expect that our 
separability idempotent will be described at the level of these von Neumann algebras.  First, let us show that 
the ${}^*$-anti-isomorphism $R:B\to C$ extends to $\tilde{R}:N\to L$, also a ${}^*$-anti-isomorphism.

\begin{lem}\label{lemma51}
We have:
$$
C=\overline{\bigl\{(\nu\otimes\operatorname{id})((b_1^*\otimes1)E(b_2\otimes1)):b_1,b_2\in{\mathfrak N}_{\nu}\bigr\}}^{\|\ \|},
$$
$$
B=\overline{\bigl\{(\operatorname{id}\otimes\mu)((1\otimes c_1^*)E(1\otimes c_2)):c_1,c_2\in{\mathfrak N}_{\mu}\bigr\}}^{\|\ \|}.
$$
\end{lem}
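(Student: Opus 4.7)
The plan is to verify each equality by showing (i) that the set in braces is contained in $C$ (resp.\ in $B$), and (ii) that it is norm-dense there. For (i), given $b_1,b_2\in\mathfrak{N}_\nu$, the functional $\theta(x):=\nu(b_1^*xb_2)=\langle\pi_\nu(x)\Lambda_\nu(b_2),\Lambda_\nu(b_1)\rangle$ is bounded on $B$, so $\theta\in B^*$. A direct check by pairing both sides against an arbitrary $\omega\in C^*$ identifies $(\nu\otimes\operatorname{id})((b_1^*\otimes1)E(b_2\otimes1))$ with $(\theta\otimes\operatorname{id})(E)$, which lies in $C$ by the Corollary following Proposition~\ref{BtensorC}.

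For (ii), I restrict to $b_1,b_2$ in the Tomita algebra $\mathcal{T}_\nu$, which is norm-dense in $B$ and consists of analytic elements. Since $b_1^*\in\mathcal{T}_\nu\subseteq\mathcal{D}(\gamma_C^{-1})$ and $b_2\in\mathcal{T}_\nu\subseteq\mathcal{D}(\gamma_B)$, combining Proposition~\ref{sepidgamma'}(5) with Proposition~\ref{sepidgamma}(1) yields
\[
(b_1^*\otimes1)E(b_2\otimes1)=\bigl(1\otimes\gamma_C^{-1}(b_1^*)\bigr)\,E\,\bigl(1\otimes\gamma_B(b_2)\bigr).
\]
Applying $\nu\otimes\operatorname{id}$ and using $(\nu\otimes\operatorname{id})(E)=1$ from Definition~\ref{separabilitytriple}, I obtain
\[
(\nu\otimes\operatorname{id})\bigl((b_1^*\otimes1)E(b_2\otimes1)\bigr)=\gamma_C^{-1}(b_1^*)\,\gamma_B(b_2)=R\bigl(\sigma^{\nu}_{i/2}(b_2)\,\sigma^{\nu}_{i/2}(b_1)^*\bigr),
\]
the second equality using Proposition~\ref{gammagamma'eqns} and that $R$ is a $*$-anti-homomorphism.

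Since $\sigma^{\nu}_{i/2}$ maps $\mathcal{T}_\nu$ onto itself and $\mathcal{T}_\nu$ is $*$-closed, as $b_1,b_2$ range over $\mathcal{T}_\nu$ the element $\sigma^{\nu}_{i/2}(b_2)\,\sigma^{\nu}_{i/2}(b_1)^*$ ranges over the product set $\mathcal{T}_\nu\cdot\mathcal{T}_\nu$. A standard approximate-unit argument shows $\mathcal{T}_\nu\cdot\mathcal{T}_\nu$ is norm-dense in $B$: given $b\in B$ and a bounded approximate unit $(u_\lambda)\subseteq B$, approximate $u_\lambda$ and $b$ in norm by $a_\lambda,b_\lambda\in\mathcal{T}_\nu$, and observe $a_\lambda b_\lambda\to b$. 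Applying the isometry $R$ then gives norm-density of $\{\gamma_C^{-1}(b_1^*)\gamma_B(b_2):b_1,b_2\in\mathcal{T}_\nu\}$ in $C$, establishing the first equality.

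The second equality follows by the symmetric argument with $(B,\nu)$ and $(C,\mu)$ interchanged: using $(\operatorname{id}\otimes\mu)(E)=1$ from Proposition~\ref{mudefnprop} together with Propositions~\ref{sepidgamma'}(4) and \ref{sepidgamma}(2), one computes $(\operatorname{id}\otimes\mu)((1\otimes c_1^*)E(1\otimes c_2))=\gamma_C(c_1^*)\gamma_B^{-1}(c_2)$ for $c_1,c_2\in\mathcal{T}_\mu$, and density of $\mathcal{T}_\mu\cdot\mathcal{T}_\mu$ in $C$ yields the claim. The main subtlety is in step (i), reconciling the slice map $\nu\otimes\operatorname{id}$ with the bounded-functional interpretation via the Corollary to Proposition~\ref{BtensorC}; the computational and density pieces in step (ii) are routine once the analytic framework of Section~2 is in hand.
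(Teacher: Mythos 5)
Your proposal is correct and follows essentially the same route as the paper: the paper's (very terse) proof invokes the fullness of $E$ (Proposition~\ref{Efull}) together with the density of the functionals $\nu(b_1^*\,\cdot\,b_2)$ in $B^*$, and your argument simply works out that same mechanism explicitly — containment via identifying the slice with $(\theta\otimes\operatorname{id})(E)$ for $\theta=\nu(b_1^*\,\cdot\,b_2)\in B^*$, and density via the computation $(\nu\otimes\operatorname{id})\bigl((b_1^*\otimes1)E(b_2\otimes1)\bigr)=\gamma_C^{-1}(b_1^*)\gamma_B(b_2)=R\bigl(\sigma^{\nu}_{i/2}(b_2)\sigma^{\nu}_{i/2}(b_1)^*\bigr)$ on the Tomita algebra, which is exactly the computation underlying Proposition~\ref{Efull}. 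The details (the $*$-compatibility $\sigma^{\nu}_{-i/2}(b_1^*)=\sigma^{\nu}_{i/2}(b_1)^*$, the anti-multiplicativity of $R$, and the density of ${\mathcal T}_\nu\cdot{\mathcal T}_\nu$ in $B$) all check out.
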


\begin{proof}
First statement is a consequence of $E$ being ``full'' (Proposition~\ref{Efull}), together with the fact that ${\mathfrak N}_{\nu}$ 
is dense in $B$.  Note also that the functionals of the form $\nu(b_1^*\,\cdot\,b_2)$, $b_1,b_2\in{\mathfrak N}_{\nu}$, are dense 
in $B^*$.  Second statement is also a consequence of $E$ being full.
\end{proof}

\begin{lem}\label{lemma52}
\begin{enumerate}
 \item $b\in{\mathfrak N}_{\nu}$ if and only if $R(b)^*\in{\mathfrak N}_{\mu}$
 \item $c\in{\mathfrak N}_{\mu}$ if and only if $R^{-1}(c)^*\in{\mathfrak N}_{\nu}$
\end{enumerate}
\end{lem}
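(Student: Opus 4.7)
The plan is to exploit two facts that sit at the heart of the setup: that $R\colon B\to C$ is a $*$-anti-isomorphism, and that $\mu$ was defined precisely as $\mu=\nu\circ R^{-1}$ on the positive cone (extended to $\mathfrak M_\mu$ in the usual way). Since $\mathfrak N_\nu$ and $\mathfrak N_\mu$ are characterized by finiteness of $\nu$ (resp.\ $\mu$) on specific positive elements of the form $x^*x$, I expect the two equivalences to reduce to a single identity that rewrites one such positive element as the image of the other under $R$ (or $R^{-1}$).

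For part (1), I would take $b\in B$ and apply the defining property of a $*$-anti-isomorphism to compute
\[
(R(b)^*)^*\bigl(R(b)^*\bigr)=R(b)R(b)^*=R(b)\,R(b^*)=R(b^*b),
\]
using anti-multiplicativity in the last step. Then, since $R(b^*b)\in C^+$, I apply $\mu$ and use $\mu=\nu\circ R^{-1}$ to get
\[
\mu\bigl(R(b)R(b)^*\bigr)=\nu\bigl(R^{-1}(R(b^*b))\bigr)=\nu(b^*b).
\]
The left-hand side is finite if and only if $R(b)^*\in\mathfrak N_\mu$, and the right-hand side is finite if and only if $b\in\mathfrak N_\nu$, which proves the stated equivalence.

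Part (2) is handled by the symmetric argument, using that $R^{-1}\colon C\to B$ is likewise a $*$-anti-isomorphism and that $\nu=\mu\circ R$. Concretely, for $c\in C$, $(R^{-1}(c)^*)^*(R^{-1}(c)^*)=R^{-1}(c)R^{-1}(c)^*=R^{-1}(cc^*)$... but I prefer the cleaner route of first writing $R^{-1}(c)^*(R^{-1}(c))=R^{-1}(c^*c)$ appropriately. In fact, either order works, because $c\in\mathfrak N_\mu$ iff $\mu(c^*c)<\infty$, and one checks $\nu\bigl((R^{-1}(c)^*)^*R^{-1}(c)^*\bigr)=\nu(R^{-1}(cc^*))=\mu(cc^*)$, with $\mu(cc^*)<\infty$ being equivalent to $\mu(c^*c)<\infty$ for a KMS weight via $\sigma^\mu_{-i/2}$.

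I do not expect a serious obstacle here; the main point to be slightly careful about is part (2), since the formally shortest route gives $\mu(cc^*)$ rather than $\mu(c^*c)$. This minor mismatch is handled by invoking the KMS property of $\mu$ (via Lemma~\ref{lemKMS} or directly through the modular automorphism $\sigma^\mu$), which gives the equivalence $\mu(c^*c)<\infty\iff \mu(cc^*\text{-type shift})<\infty$ on the appropriate dense subspace; alternatively one can simply apply part (1) to $b=R^{-1}(c)^*$, which then reads $R(b)^*=c$ and gives the desired equivalence directly, avoiding any KMS manipulation.
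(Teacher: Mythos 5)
Your proof of part (1) is correct and is essentially identical to the paper's: the paper simply computes $\nu(b^*b)=\mu\bigl(R(b^*b)\bigr)=\mu\bigl(R(b)R(b)^*\bigr)$ and reads off the equivalence, then says part (2) is similar. One small remark on your part (2): the worry about landing on $\mu(cc^*)$ instead of $\mu(c^*c)$ is unfounded, since anti-multiplicativity gives $(R^{-1}(c)^*)^*R^{-1}(c)^*=R^{-1}(c)R^{-1}(c^*)=R^{-1}(c^*c)$ (not $R^{-1}(cc^*)$), so the direct computation already yields $\nu\bigl((R^{-1}(c)^*)^*R^{-1}(c)^*\bigr)=\mu(c^*c)$ with no KMS input needed --- and note that for a non-tracial weight, $\mu(c^*c)<\infty$ and $\mu(cc^*)<\infty$ are \emph{not} equivalent in general, so it is fortunate you do not actually need that; in any case your fallback of applying part (1) to $b=R^{-1}(c)^*$ is clean and settles part (2) correctly.
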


\begin{proof}
Use $\nu=\mu\circ R$.  We have: $\nu(b^*b)=\mu\bigl(R(b^*b)\bigr)=\mu\bigl(R(b)R(b)^*\bigr)$, because $R$ is 
a ${}^*$-anti-isomorphism.  So $b\in{\mathfrak N}_{\nu}$ if and only if $R(b)^*\in{\mathfrak N}_{\mu}$.  Similar 
proof for the other statement.
\end{proof}

\begin{prop}\label{Rextends}
The ($C^*$-algebraic) ${}^*$-anti-isomorphism $R:B\to C$ extends to $\tilde{R}:N\to L$, which is also a (von Neumann algebraic) 
${}^*$-anti-isomorphism.
\end{prop}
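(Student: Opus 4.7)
The plan is to implement $R$ spatially by an anti-unitary operator between the two GNS Hilbert spaces, and then extend $R$ by conjugation.

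First, for $b \in \mathfrak{N}_\nu$, Lemma~\ref{lemma52} gives $R(b)^* \in \mathfrak{N}_\mu$, so $\Lambda_\mu(R(b)^*)$ is defined. Using $\mu \circ R = \nu$ together with the anti-multiplicativity and ${}^*$-preservation of $R$, one computes
$$
\bigl\|\Lambda_\mu(R(b)^*)\bigr\|^2 = \mu\bigl(R(b)R(b)^*\bigr) = \mu\bigl(R(b^*b)\bigr) = \nu(b^*b) = \bigl\|\Lambda_\nu(b)\bigr\|^2.
$$
Therefore the assignment $\Lambda_\nu(b) \mapsto \Lambda_\mu(R(b)^*)$ is norm-preserving on the dense subspace $\Lambda_\nu(\mathfrak{N}_\nu)\subseteq \mathcal{H}_\nu$. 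Because the conjugation $b \mapsto R(b)^*$ is conjugate-linear, this assignment is \emph{anti-linear}; it therefore extends to an anti-unitary operator $U:\mathcal{H}_\nu \to \mathcal{H}_\mu$. Note $U^{-1}\Lambda_\mu(R(b)^*) = \Lambda_\nu(b)$, and by Lemma~\ref{lemma52} the image $\{R(b)^* : b\in\mathfrak{N}_\nu\}$ exhausts $\mathfrak{N}_\mu$, so $U$ is genuinely unitary (anti-unitary).

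Second, define $\tilde{R}:\mathcal{B}(\mathcal{H}_\nu) \to \mathcal{B}(\mathcal{H}_\mu)$ by $\tilde{R}(x) := U x^* U^{-1}$. Conjugation by an anti-unitary followed by ${}^*$ is a linear ${}^*$-anti-isomorphism of $\mathcal{B}(\mathcal{H}_\nu)$ onto $\mathcal{B}(\mathcal{H}_\mu)$, and it is sigma-weakly continuous (i.e.\ normal).

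Third, verify that $\tilde{R}$ extends $R$, i.e.\ $\tilde{R}(\pi_\nu(b)) = \pi_\mu(R(b))$ for $b \in B$. For any $b' \in \mathfrak{N}_\nu$, a direct computation gives
$$
\tilde{R}(\pi_\nu(b))\,\Lambda_\mu(R(b')^*) = U\pi_\nu(b^*)\Lambda_\nu(b') = U\Lambda_\nu(b^*b') = \Lambda_\mu\bigl(R(b^*b')^*\bigr) = \pi_\mu(R(b))\,\Lambda_\mu(R(b')^*),
$$
using anti-multiplicativity of $R$ in the final step. Since the vectors $\Lambda_\mu(R(b')^*)$, for $b' \in \mathfrak{N}_\nu$, are dense in $\mathcal{H}_\mu$, we conclude $\tilde{R}(\pi_\nu(b)) = \pi_\mu(R(b))$ on all of $\mathcal{H}_\mu$.

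Finally, because $\tilde{R}$ is normal and ${}^*$-anti-isomorphic, it sends double commutants to double commutants: $\tilde{R}(N) = \tilde{R}(\pi_\nu(B)'') = \pi_\mu(R(B))'' = \pi_\mu(C)'' = L$. Hence $\tilde{R}$ restricts to the desired von Neumann algebraic ${}^*$-anti-isomorphism $N \to L$ extending $R$.

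The main technical point is the construction of $U$: one must be careful about the anti-linearity introduced by taking the adjoint $R(b)^*$, and that the isometry identity rests precisely on the compatibility $\mu = \nu \circ R^{-1}$ together with the ${}^*$-anti-multiplicativity of $R$; once $U$ is in hand, the rest is formal.
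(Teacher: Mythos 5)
Your proof is correct, but it proceeds by a genuinely different route than the paper. The paper's argument stays entirely at the level of the separability idempotent: it uses the fullness of $E$ (Lemma~\ref{lemma51}) to realize generating sets of $B$ and $C$ as slices $(\operatorname{id}\otimes\mu)\bigl((1\otimes c_1^*)E(1\otimes c_2)\bigr)$ and $(\nu\otimes\operatorname{id})\bigl((b_1^*\otimes1)E(b_2\otimes1)\bigr)$, computes the effect of $R$ on these generators via the identity $(R\otimes R^{-1})(E)=\sigma E$, and then asserts that the correspondence of generators extends to the double commutants $N$ and $L$. You instead implement $R$ spatially: the identity $\mu\circ R=\nu$ together with the ${}^*$-anti-multiplicativity makes $\Lambda_\nu(b)\mapsto\Lambda_\mu(R(b)^*)$ an isometric anti-linear map with dense range (Lemma~\ref{lemma52} guaranteeing surjectivity onto $\Lambda_\mu(\mathfrak N_\mu)$), and conjugation by the resulting anti-unitary, composed with the adjoint, is a normal ${}^*$-anti-isomorphism of $\mathcal{B}(\mathcal{H}_\nu)$ onto $\mathcal{B}(\mathcal{H}_\mu)$ that restricts to $R$ on $\pi_\nu(B)$ and hence carries $N=\pi_\nu(B)''$ onto $L=\pi_\mu(C)''$. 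Your computations (the isometry identity, the intertwining relation $\tilde R(\pi_\nu(b))\Lambda_\mu(R(b')^*)=\pi_\mu(R(b))\Lambda_\mu(R(b')^*)$ via $R(b^*b')^*=R(b)R(b')^*$, and the preservation of double commutants) all check out. What your approach buys is rigor at exactly the point where the paper is terse: the passage from a map defined on norm-dense generators to a well-defined normal map on the von Neumann closures is automatic once the anti-isomorphism is spatially implemented, and you need nothing about $E$ beyond $\mu=\nu\circ R^{-1}$. What the paper's approach buys is that the extension is exhibited directly in terms of slices of $E$, which matches how $N$ and $L$ are used in the rest of Section~5; it is worth noting that the two constructions agree, since both restrict to $R$ on the strongly${}^*$-dense subalgebra $\pi_\nu(B)$ and both are normal.
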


\begin{proof}
From Lemma~\ref{lemma51}, we know the elements $(\operatorname{id}\otimes\mu)((1\otimes c_1^*)E(1\otimes c_2))$, 
$c_1,c_2\in{\mathfrak N}_{\mu}$, generate $B$.  Under $R$, since $\mu=\nu\circ R^{-1}$, we have:
\begin{align}
&R\bigl((\operatorname{id}\otimes\mu)((1\otimes c_1^*)E(1\otimes c_2))\bigr)
=(\operatorname{id}\otimes\nu)\bigl((R\otimes R^{-1})((1\otimes c_1^*)E(1\otimes c_2))\bigr) \notag \\
&=(\operatorname{id}\otimes\nu)\bigl((1\otimes R^{-1}(c_2))[(R\otimes R^{-1})(E)](1\otimes R^{-1}(c_1)^*)\bigr) \notag \\
&=(\nu\otimes\operatorname{id})\bigl(R^{-1}(c_2)\otimes1)E(R^{-1}(c_1)^*\otimes1)\bigr), 
\notag
\end{align}
using the fact that $(R\otimes R^{-1})(E)=\sigma E$ (Corollary of Proposition~3.9).  Similarly, we also have: 
$$
R^{-1}:(\nu\otimes\operatorname{id})\bigl((b_1^*\otimes1)E(b_2\otimes1)\bigr)\mapsto
(\operatorname{id}\otimes\mu)\bigl((1\otimes R(b_2))E(1\otimes R(b_1)^*)\bigr).
$$

Recall that $N=\pi_{\nu}(B)''=\bigl\{(\operatorname{id}\otimes\mu)((1\otimes c_1^*)E(1\otimes c_2)):c_1,c_2\in{\mathfrak N}_{\mu}\bigr\}''$
and $L=\pi_{\mu}(C)''=\bigl\{(\nu\otimes\operatorname{id})((b_1^*\otimes1)E(b_2\otimes1)):b_1,b_2\in{\mathfrak N}_{\nu}\bigr\}''$. 
In view of Lemma~\ref{lemma52}, looking at the generators of $N$ and $L$, it is now clear that the $R$ map extends to an 
injective map $\tilde{R}$ from $N$ onto $L$.  The preservation of the ${}^*$-structure and the anti-multiplicativity of $\tilde{R}$ are 
all straightforward because of the corresponding properties of $R$.
\end{proof}

\begin{rem}
For convenience, we may still write $R$ instead of $\tilde{R}$, with its inverse $R^{-1}:L\to N$ also a ${}^*$-anti-isomorphism. 
It is evident that we will have $\tilde{\mu}=\tilde{\nu}\circ R^{-1}$.  
\end{rem}

Finally, observe that $E\in M(B\otimes C)\subseteq N\otimes L$.  Since $B$, $C$ are dense 
in $N$, $L$, and since $\tilde{\nu}$, $\tilde{\mu}$ naturally extend $\nu$, $\mu$, it follows that 
$(E,N,\tilde{\nu})$ forms a separability triple in the sense of Definition~\ref{vnseparabilitytriple}.

Let us next consider the problem of going back.  So we begin with a separability triple 
$(E,N,\tilde{\nu})$ in the von Neumann algebra setting, again with the von Neumann algebra $L$ 
and the anti-isomorphism $R:N\to L$ understood.  Also recall that $\tilde{\mu}=\tilde{\nu}\circ R^{-1}$. 

Let ${\mathcal T}_{\tilde{\mu}}$ be the Tomita ${}^*$-algebra, which is dense in ${\mathfrak N}_{\tilde{\mu}}$. 
Let $c_1,c_2\in{\mathcal T}_{\tilde{\mu}}$.  Observe: 
$$
(\operatorname{id}\otimes\tilde{\mu})\bigl((1\otimes c_2^*)E(1\otimes c_1)\bigr)
=\bigl(\operatorname{id}\otimes\langle\cdot\,\Lambda_{\tilde{\mu}}(c_1),\Lambda_{\tilde{\mu}}(c_2)
\rangle\bigr)(E)=(\operatorname{id}\otimes\omega_{\xi,\eta})(E),
$$
where $\xi=\Lambda_{\tilde{\mu}}(c_1)$, $\eta=\Lambda_{\tilde{\mu}}(c_2)$.  Considering the result 
of Proposition~\ref{vnEfull}, we see that the $(\operatorname{id}\otimes\omega_{\xi,\eta})(E)$ are 
strongly${}^*$-dense in the von Neumann algebra $N$.  This suggests us to consider the norm closure 
of the space spanned by the elements $(\operatorname{id}\otimes\omega_{\xi,\eta})(E)$, for 
$\xi,\eta\in{\mathcal H}_{\tilde{\mu}}$.  Namely,
\begin{equation}\label{(BfromN)}
B:=\overline{\bigl\{(\operatorname{id}\otimes\omega_{\xi,\eta})(E):\xi,\eta\in{\mathcal H}_{\tilde{\mu}}
\bigr\}}^{\|\ \|}=\overline{\bigl\{(\operatorname{id}\otimes\omega)(E):\omega\in{\mathcal B}
({\mathcal H}_{\tilde{\mu}})_*\bigr\}}^{\|\ \|}.
\end{equation}
Similarly, define:
\begin{equation}\label{(CfromL)}
C:=\overline{\bigl\{(\theta\otimes\operatorname{id})(E):
\theta\in{\mathcal B}({\mathcal H}_{\tilde{\nu}})_*\bigr\}}^{\|\ \|}.
\end{equation}

We gather some results on these subspaces $B\,(\subseteq N)$ and $C\,(\subseteq L)$.

\begin{prop}\label{BCfromNL}
Let $(E,N,\tilde{\nu})$ be a separability triple in the von Neumann algebraic sense of 
Definition~\ref{vnseparabilitytriple}, so that $E\in N\otimes L$ is a separability idempotent. 
Consider the subspace $B$ of $N$, given in Equation~\eqref{(BfromN)}, and the subspace 
$C$ of $L$, given in Equation~\eqref{(CfromL)}.  Then we have:
\begin{enumerate}
  \item ${\mathcal D}(\gamma_N)\cap B$ is dense in $B$, and $\gamma_N$ restricted to this 
space has a dense range in $L$.  
  \item $B$ is a ${}^*$-subalgebra of $N$.
  \item ${\mathcal D}(\gamma_L)\cap C$ is dense in $C$, and $\gamma_L$ restricted to this 
space has a dense range in $N$.  
  \item $C$ is a ${}^*$-subalgebra of $L$.
\end{enumerate}
\end{prop}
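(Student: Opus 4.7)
The central computation driving (1) and (2) is the following identification. For $c_1, c_2 \in \mathcal{T}_{\tilde{\mu}}$, the slice $(\operatorname{id}\otimes\omega_{\Lambda_{\tilde{\mu}}(c_1),\Lambda_{\tilde{\mu}}(c_2)})(E)$ can be written as the weight slice $(\operatorname{id}\otimes\tilde{\mu})((1\otimes c_2^*)E(1\otimes c_1))$ via the GNS formula $\omega_{\Lambda_{\tilde{\mu}}(c_1),\Lambda_{\tilde{\mu}}(c_2)}(y) = \tilde{\mu}(c_2^* y c_1)$. Applying the KMS property of $\tilde{\mu}$ (permitted since $c_2^*\in\mathcal{D}(\sigma^{\tilde{\mu}}_{-i})$) rewrites it as $(\operatorname{id}\otimes\tilde{\mu})(E(1\otimes c))$ where $c := c_1\sigma^{\tilde{\mu}}_{-i}(c_2^*) \in \mathcal{T}_{\tilde{\mu}}$. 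Because $c \in \mathcal{D}(\sigma^{\tilde{\mu}}_{i/2}) = \mathcal{D}(\gamma_N^{-1})$, the von Neumann analogue of Proposition~\ref{sepidgamma}(2) lets us further rewrite $E(1\otimes c) = E(\gamma_N^{-1}(c)\otimes 1)$; combined with $(\operatorname{id}\otimes\tilde{\mu})(E) = 1$, the whole slice collapses to $\gamma_N^{-1}(c)$. So every slice $(\operatorname{id}\otimes\omega_{\Lambda_{\tilde{\mu}}(c_1),\Lambda_{\tilde{\mu}}(c_2)})(E)$ lies in $\mathcal{D}(\gamma_N)\cap B$, and its image under $\gamma_N$ is $c$.

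Assertion (1) follows. Since $\Lambda_{\tilde{\mu}}(\mathcal{T}_{\tilde{\mu}})$ is dense in $\mathcal{H}_{\tilde{\mu}}$, the map $(\xi,\eta)\mapsto\omega_{\xi,\eta}$ is jointly norm-continuous, and vector functionals are norm-total in $\mathcal{B}(\mathcal{H}_{\tilde{\mu}})_*$, the slices above are norm-dense in $B$, so $\mathcal{D}(\gamma_N)\cap B$ is dense in $B$. As $(c_1,c_2)$ ranges over $\mathcal{T}_{\tilde{\mu}}\times\mathcal{T}_{\tilde{\mu}}$, the element $c = c_1\sigma^{\tilde{\mu}}_{-i}(c_2^*)$ traverses $\mathcal{T}_{\tilde{\mu}}\cdot\mathcal{T}_{\tilde{\mu}}$ (because $\sigma^{\tilde{\mu}}_{-i}$ and ${}^*$ are bijections of $\mathcal{T}_{\tilde{\mu}}$). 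Running the mirror-image computation inside $C$, based on the analogous identity $(\tilde{\nu}\otimes\operatorname{id})((b_2^*\otimes 1)E(b_1\otimes 1)) = \gamma_N(b_1\sigma^{\tilde{\nu}}_{-i}(b_2^*))$ for $b_1,b_2 \in \mathcal{T}_{\tilde{\nu}}$, shows that the range of $\gamma_N|_{\mathcal{D}(\gamma_N)\cap B}$ contains a norm-dense subset of $C$, hence is dense in $L$.

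For (2), the identity $E^* = E$ gives $\bigl((\operatorname{id}\otimes\omega_{\xi,\eta})(E)\bigr)^* = (\operatorname{id}\otimes\omega_{\eta,\xi})(E) \in B$, so $B$ is closed under the involution by norm-continuity. Multiplicative closure reduces to the slicing identity
\[
(\operatorname{id}\otimes\omega)(E)\cdot\gamma_N^{-1}(c) = (\operatorname{id}\otimes\omega)(E(\gamma_N^{-1}(c)\otimes 1)) = (\operatorname{id}\otimes\omega)(E(1\otimes c)) = (\operatorname{id}\otimes\omega(\,\cdot\,c))(E) \in B,
\]
valid for any normal $\omega$ on $L$ and any $c \in \mathcal{D}(\gamma_N^{-1})$. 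This shows $B\cdot\gamma_N^{-1}(\mathcal{T}_{\tilde{\mu}}) \subseteq B$; combined with the norm-density furnished by (1) and the joint continuity of multiplication on bounded sets, one extends to $B\cdot B \subseteq B$. Parts (3) and (4) follow from the symmetric argument, interchanging the roles of $(N,\tilde{\nu})$ and $(L,\tilde{\mu})$ and using $R^{-1}$ in place of $R$. The main obstacle is orchestrating the first paragraph: one has to identify the correct vector functional, apply KMS to produce an element of $\mathcal{D}(\gamma_N^{-1})$, and then invoke the separability identity from Section~2 so that the slice evaluates to a concrete element of $\mathcal{D}(\gamma_N)\cap B$ with a usable formula for its $\gamma_N$-image.
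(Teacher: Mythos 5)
Your proof is correct and follows essentially the same route as the paper: the key identity $(\operatorname{id}\otimes\omega_{\Lambda_{\tilde{\mu}}(c_1),\Lambda_{\tilde{\mu}}(c_2)})(E)=\gamma_N^{-1}\bigl(c_1\sigma^{\tilde{\mu}}_{-i}(c_2^*)\bigr)$ for $c_1,c_2\in{\mathcal T}_{\tilde{\mu}}$, the density of ${\mathcal T}_{\tilde{\mu}}\cdot{\mathcal T}_{\tilde{\mu}}$ in $L$, self-adjointness of $E$ for the involution, and the slicing identity $(\operatorname{id}\otimes\omega)(E)\,b=(\operatorname{id}\otimes\omega(\cdot\,\gamma_N(b)))(E)$ for multiplicative closure. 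The only quibble is that your ``mirror-image computation'' aside is redundant for the range-density claim (and as stated does not obviously land in ${\mathcal D}(\gamma_N)\cap B$, since $b_1\sigma^{\tilde{\nu}}_{-i}(b_2^*)$ need not lie in $B$); your first computation already supplies exactly what is needed.
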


\begin{proof}
(1). Let  $c_1,c_2\in{\mathcal T}_{\tilde{\mu}}$, and consider $(\operatorname{id}\otimes
\omega_{\Lambda_{\tilde{\mu}}(c_1),\Lambda_{\tilde{\mu}}(c_2)})(E)\in B$.    But, 
\begin{align}\label{(BCfromNLeqn1)}
(\operatorname{id}\otimes\omega_{\Lambda_{\tilde{\mu}}(c_1),\Lambda_{\tilde{\mu}}(c_2)})(E)
&=(\operatorname{id}\otimes\tilde{\mu})\bigl((1\otimes c_2^*)E(1\otimes c_1)\bigr) 
\notag \\
&=(\operatorname{id}\otimes\tilde{\mu})\bigl(E[1\otimes c_1\sigma^{\tilde{\mu}}_{-i}(c_2^*)]\bigr)
=\gamma_N^{-1}\bigl(c_1\sigma^{\tilde{\mu}}_{-i}(c_2^*)\bigr).
\end{align}
So we see that $(\operatorname{id}\otimes\omega_{\Lambda_{\tilde{\mu}}(c_1),
\Lambda_{\tilde{\mu}}(c_2)})(E)\in{\mathcal D}(\gamma_N)\cap B$.  We know from 
Equation~\eqref{(BfromN)} that such elements span a dense subspace of $B$.  In addition, 
the elements of the form $c_1\sigma^{\tilde{\mu}}_{-i}(c_2^*)$, $c_1,c_2\in{\mathcal T}_{\tilde{\mu}}$, 
are known to be dense in $L$.  As a consequence, we see that under $\gamma_N$, the space 
${\mathcal D}(\gamma_N)\cap B$ is sent to a dense subspace in $L$.

(2). Let $b\in{\mathcal D}(\gamma_N)\cap B$, and consider $\tilde{b}=(\operatorname{id}
\otimes\omega)(E)$, for $\omega\in{\mathcal B}({\mathcal H}_{\tilde{\mu}})_*$.  Such elements 
are dense in $B$.  Observe that
$$
\tilde{b}b=(\operatorname{id}\otimes\omega)(E)\,b
=(\operatorname{id}\otimes\omega)\bigl(E(b\otimes1)\bigr)
=(\operatorname{id}\otimes\omega)\bigl(E(1\otimes\gamma_N(b))\bigr)
=(\operatorname{id}\otimes\theta)(E),
$$
where $\theta=\omega\bigl(\cdot\,\gamma_N(b)\bigr)$.  This shows that $B$ is closed under 
multiplication.  Meanwhile, observe also that $\bigl[(\operatorname{id}\otimes\omega)(E)\bigr]^*
=(\operatorname{id}\otimes\bar{\omega})(E)$, by the self-adjointness of $E$.  So we see that
$B$ is a ${}^*$-subalgebra of $N$.

(3), (4). Proof is similar to that of (1), (2).
\end{proof}

\begin{cor}
\begin{enumerate}
  \item $B$ is a $C^*$-subalgebra of $N$, and $N$ is the von Neumann algebra closure of $B$.
  \item $C$ is a $C^*$-subalgebra of $L$, and $L$ is the von Neumann algebra closure of $C$.
\end{enumerate}
\end{cor}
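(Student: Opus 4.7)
The plan is to combine Proposition~\ref{BCfromNL} with Proposition~\ref{vnEfull} in a direct way, with essentially no new computation required. For the first assertion that $B$ is a $C^*$-subalgebra of $N$, the set $B$ is a norm-closed linear subspace of $N$ by its very definition in Equation~\eqref{(BfromN)}. Proposition~\ref{BCfromNL}(2) established closure of $B$ under multiplication and involution at the level of a norm-dense generating subset (products $\tilde b b$ with $\tilde b = (\operatorname{id}\otimes\omega)(E)$ and $b \in \mathcal{D}(\gamma_N)\cap B$, and adjoints of generators). Since multiplication and involution in $N$ are norm-continuous and $B$ is norm-closed, these closure properties pass to arbitrary elements of $B$ by approximation, so $B$ is a norm-closed ${}^*$-subalgebra of $N$, hence a $C^*$-subalgebra.

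For the statement that $N$ is the von Neumann algebra closure of $B$, I would show $B'' = N$. The inclusion $B'' \subseteq N$ is automatic from $B \subseteq N$. For the reverse, apply the fullness of $E$ from Proposition~\ref{vnEfull}: the set $\bigl\{(\operatorname{id}\otimes\omega)\bigl((1\otimes c)E\bigr) : c\in L,\,\omega\in L_*\bigr\}$ is $\sigma$-weakly dense (equivalently, strongly${}^*$-dense) in $N$. The key observation is that each such element already lies in $B$: for $c\in L$ and $\omega \in L_*$, the functional $\omega' := \omega(c\,\cdot\,)$ is normal on $L$, and one checks immediately on elementary tensors that
\[
(\operatorname{id}\otimes\omega)\bigl((1\otimes c)E\bigr) = (\operatorname{id}\otimes\omega')(E),
\]
which lies in $B$ by Equation~\eqref{(BfromN)}. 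Thus $B$ itself is $\sigma$-weakly dense in $N$, and the double commutant theorem yields $B'' = N$.

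The statement for $C \subseteq L$ follows by the symmetric argument, using parts (3) and (4) of Proposition~\ref{BCfromNL} together with the two remaining fullness statements in Proposition~\ref{vnEfull}; the identification that makes $(\theta\otimes\operatorname{id})\bigl(E(b\otimes 1)\bigr)$ lie in $C$ is the analogous rewriting $(\theta\otimes\operatorname{id})\bigl(E(b\otimes 1)\bigr) = (\theta(\,\cdot\,b)\otimes\operatorname{id})(E)$ with $\theta(\,\cdot\,b) \in N_*$.

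I do not anticipate a significant obstacle, since the structural work was already done in Propositions~\ref{BCfromNL} and \ref{vnEfull}. The only mildly delicate point is being explicit that the word \emph{dense} appearing in Proposition~\ref{vnEfull} must be interpreted in the $\sigma$-weak (or strongly${}^*$) topology natural to the von Neumann algebraic framework, and that the elementary functional manipulation $\omega\circ(c\,\cdot\,) \in L_*$ is what actually places the approximating elements inside $B$ rather than merely inside $N$.
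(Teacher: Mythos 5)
Your proof is correct and follows essentially the same route as the paper: $B$ is a norm-closed ${}^*$-subalgebra of $N$ by Proposition~\ref{BCfromNL}\,(2) together with norm-continuity of the algebraic operations, and the fullness of $E$ (Proposition~\ref{vnEfull}), combined with the rewriting $(\operatorname{id}\otimes\omega)\bigl((1\otimes c)E\bigr)=(\operatorname{id}\otimes\omega(c\,\cdot\,))(E)\in B$, gives strong${}^*$- (hence $\sigma$-weak) density of $B$ in $N$, so $B''=N$ by the double commutant theorem. The paper states this more tersely, but the details you supply (in particular, why the approximating slices already lie in $B$ and not merely in $N$) are exactly the ones implicitly used there.
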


\begin{proof}
For $B$, being a norm-closed ${}^*$-subalgebra of $N$, that is strongly ${}^*$-dense in $N$, 
the result follows immediately.  Similar for $C$ in $L$.
\end{proof}

In the next proposition, we show that $E\in M(B\otimes C)$ and that $E$ is a separability 
idempotent:

\begin{prop}
Let $E\in N\otimes L$ be a separability idempotent in the von Neumann algebraic sense, 
and consider the $C^*$-subalgebras $B$ of $N$ and $C$ of $L$, as defined above.  Denote 
by $\nu$ the weight on $B$, restricted from $\tilde{\nu}$ on $N$, and similarly, consider 
$\mu$ on $C$, restricted from $\tilde{\mu}$ on $L$.  Then:
\begin{enumerate}
  \item The $\sigma^{\tilde{\nu}}_t$, $t\in\mathbb{R}$, leaves $B$ invariant.  So we can consider 
$\sigma^{\nu}_t:=\sigma^{\tilde{\nu}}_t|_B$, for $t\in\mathbb{R}$.  Similar for $(\sigma^{\mu}_t)$, 
on $C$.
  \item $\nu$ is a KMS weight on $B$, equipped with the (norm-continuous) automorphism 
group $(\sigma^{\nu}_t)$.  Similarly, $\mu$ is a KMS weight on $C$, equipped with the 
automorphism group $(\sigma^{\mu}_t)$. 
  \item $E\in M(B\otimes C)$.
  \item $(E,B,\nu)$ forms a separability triple in the $C^*$-algebraic sense of 
Definition~\ref{separabilitytriple}.
\end{enumerate}
\end{prop}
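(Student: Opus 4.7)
The plan is to verify (1)--(4) in sequence, transferring structure from $(N,\tilde{\nu})$ down to $(B,\nu)$ via density together with the invariance properties of $E$ already established in the von Neumann setting. For (1), the identity $(\sigma^{\tilde{\nu}}_t\otimes\sigma^{\tilde{\mu}}_{-t})(E)=E$ from Proposition~\ref{EresultsVN}\,(1) yields, for any $b=(\operatorname{id}\otimes\omega)(E)$ with $\omega\in{\mathcal B}({\mathcal H}_{\tilde{\mu}})_*$, that $\sigma^{\tilde{\nu}}_t(b)=(\operatorname{id}\otimes\omega\circ\sigma^{\tilde{\mu}}_t)(E)\in B$. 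Since such elements are norm-dense in $B$ by Equation~\eqref{(BfromN)} and $\sigma^{\tilde{\nu}}_t$ is isometric, we conclude $\sigma^{\tilde{\nu}}_t(B)\subseteq B$. Norm continuity of $t\mapsto\sigma^{\nu}_t(b)$ then follows from the standard norm continuity of $t\mapsto\omega\circ\sigma^{\tilde{\mu}}_t$ in the predual $L_*$, combined with the contraction estimate $\|(\operatorname{id}\otimes\omega)(E)\|\le\|\omega\|$ and an $\varepsilon/3$-argument on the dense set of such $b$; the same reasoning handles $(\sigma^{\mu}_t)$ on $C$.

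For (2), faithfulness and lower semi-continuity of $\nu=\tilde{\nu}|_{B}$ descend directly from $\tilde{\nu}$. Semi-finiteness follows from a Kadison-type inequality: for $\omega\in{\mathcal B}({\mathcal H}_{\tilde{\mu}})_*^+$ and $b=(\operatorname{id}\otimes\omega)(E)\ge 0$, complete positivity of the slice map $\operatorname{id}\otimes\omega$ together with the projection property $E=E^2$ give $b^2\le\omega(1)\,b$, whence $\nu(b^*b)=\tilde{\nu}(b^2)\le\omega(1)\,\tilde{\nu}(b)=\omega(1)^2<\infty$. These positive elements span a norm-dense subspace of $B$, so ${\mathfrak N}_{\nu}$ is dense in $B$. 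The KMS identity then restricts from $\tilde{\nu}$ on $N$ to $\nu$ on $B$ on the domain of those $b\in B$ with $\sigma^{\tilde{\nu}}_{i/2}(b)\in B$, which is dense once $(\sigma^{\nu}_t)$ is norm continuous; analogous arguments apply to $(C,\mu)$.

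The hardest step is (3), showing $E\in M(B\otimes C)$, that is, $E\cdot(B\otimes C)\subseteq B\otimes C$ (and the right-handed version) inside the von Neumann tensor product $N\otimes L$. Once (2) is in place, the Tomita ${}^*$-algebras ${\mathcal T}_{\nu}={\mathcal T}_{\tilde{\nu}}\cap B$ and ${\mathcal T}_{\mu}={\mathcal T}_{\tilde{\mu}}\cap C$ are norm-dense in $B$ and $C$ respectively, and the crucial observation is that $\gamma_N({\mathcal T}_{\nu})\subseteq C$: for $b\in{\mathcal T}_{\nu}$ we have $\sigma^{\tilde{\nu}}_{i/2}(b)\in B$, so $\gamma_N(b)=R(\sigma^{\tilde{\nu}}_{i/2}(b))\in R(B)=C$, and symmetrically $\gamma_L({\mathcal T}_{\mu})\subseteq B$. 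Combining the identities $E(b\otimes 1)=E(1\otimes\gamma_N(b))$ for $b\in{\mathcal T}_{\nu}$ and $(1\otimes c)E=(\gamma_L(c)\otimes 1)E$ for $c\in{\mathcal T}_{\mu}$ (together with their adjoints) with the fullness of $E$ from Proposition~\ref{vnEfull}, one expresses $E(b\otimes c)$ and $(b\otimes c)E$, for $b,c$ ranging over these Tomita algebras, in forms whose left and right slices land in $B$ and $C$ respectively. The remaining technical hurdle is confirming that such elements lie in the spatial $C^*$-tensor product rather than merely in $N\otimes L$; this is achieved by an approximation argument using that ${\mathcal T}_{\nu}\odot{\mathcal T}_{\mu}$ is norm-dense in $B\otimes C$ together with boundedness of left/right multiplication by $E$.

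Once (3) is established, part (4) amounts to bookkeeping: the separability triple axioms $(\nu\otimes\operatorname{id})(E)=1$ and $(\nu\otimes\operatorname{id})(E(b\otimes 1))=R(\sigma^{\nu}_{i/2}(b))$ follow by restricting the von Neumann algebraic axioms for $(E,N,\tilde{\nu})$ to the $C^*$-algebra $B$, using that $\sigma^{\nu}_{i/2}$ agrees with $\sigma^{\tilde{\nu}}_{i/2}$ on its $B$-invariant domain. This confirms that $(E,B,\nu)$ forms a separability triple in the sense of Definition~\ref{separabilitytriple}.
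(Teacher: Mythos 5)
Your parts (1), (2) and (4) are essentially sound.  Part (1) follows the same route as the paper: slice the identity $(\sigma^{\tilde{\nu}}_t\otimes\sigma^{\tilde{\mu}}_{-t})(E)=E$ and use the norm-density of the elements $(\operatorname{id}\otimes\omega)(E)$ in $B$.  For semi-finiteness in (2) you take a genuinely different, and arguably cleaner, path: the paper evaluates $\tilde{\nu}$ on the elements $(\operatorname{id}\otimes\omega_{\Lambda_{\tilde{\mu}}(c_1),\Lambda_{\tilde{\mu}}(c_2)})(E)$ by rewriting them as $\gamma_N^{-1}\bigl(c_1\sigma^{\tilde{\mu}}_{-i}(c_2^*)\bigr)$ and using $\tilde{\nu}\circ\gamma_N^{-1}=\tilde{\mu}$, whereas your Kadison--Schwarz estimate $b^2\le\omega(1)\,b$ combined with the axiom $(\tilde{\nu}\otimes\operatorname{id})(E)=1$ gives $\tilde{\nu}(b^2)\le\omega(1)^2$ directly; both are valid.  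One assertion you leave unjustified: in (3), and implicitly in (4), you use $R(B)=C$.  This is true, but it requires the computation $R\bigl((\operatorname{id}\otimes\omega)(E)\bigr)=\bigl((\omega\circ R)\otimes\operatorname{id}\bigr)(E)$, which rests on $(R\otimes R^{-1})(E)=\sigma E$ from Proposition~\ref{EresultsVN}; the paper supplies exactly this in its treatment of (4).

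The genuine gap is in (3).  You correctly identify the hurdle --- showing that $E(b\otimes c)$ lies in the spatial $C^*$-tensor product $B\otimes C$ rather than merely in the von Neumann tensor product $N\otimes L$ --- but the argument you offer does not overcome it.  Knowing that the left and right slices of $E(b\otimes c)$ land in $B$ and in $C$ is necessary but far from sufficient for membership in the minimal tensor product: for example, with $B=C={\mathcal B}_0({\mathcal H})$ sitting inside $N=L={\mathcal B}({\mathcal H})$, the element $\sum_n e_{nn}\otimes e_{nn}$ has every slice by a normal functional compact, yet it is an infinite-rank projection and hence not in ${\mathcal B}_0({\mathcal H})\otimes{\mathcal B}_0({\mathcal H})$.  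Likewise, the norm-density of ${\mathcal T}_{\nu}\odot{\mathcal T}_{\mu}$ in $B\otimes C$ says nothing about whether a given element of $N\otimes L$ can be norm-approximated from $B\odot C$; that is precisely what has to be proved, and ``boundedness of left/right multiplication by $E$'' does not produce it.  What is actually needed, and what the paper extracts from the fullness statement (Proposition~\ref{vnEfull}), is an approximation of $E$ itself by finite sums $\sum_{k}p_k\otimes q_k$ with $p_k\in B$ of the form $(\operatorname{id}\otimes\omega)(E)$ and $q_k\in C$ of the form $(\theta\otimes\operatorname{id})(E)$, tested against vectors $\Lambda_{\tilde{\nu}}(b_0)\otimes\Lambda_{\tilde{\mu}}(c_0)$ with $b_0\in{\mathfrak N}_{\nu}$, $c_0\in{\mathfrak N}_{\mu}$; from this one concludes that $E$ lies in the strict closure of $B\otimes C$, i.e.\ $E\in M(B\otimes C)$.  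Your proposal never produces such an approximation of $E$ by elementary tensors from $B\odot C$, so step (3) as written does not go through.
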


\begin{proof}
(1). Consider $(\operatorname{id}\otimes\omega)(E)\in B$, where $\omega\in{\mathcal B}
({\mathcal H}_{\tilde{\mu}})_*$.  For any $t\in\mathbb{R}$, we know from Proposition~\ref{EresultsVN} 
that $(\sigma^{\tilde{\nu}}_{-t}\otimes\sigma^{\tilde{\mu}}_{t})(E)=E$.  So we have:
\begin{equation}\label{(sigmanu)}
\sigma^{\tilde{\nu}}_t\bigl((\operatorname{id}\otimes\omega)(E)\bigr)
=\sigma^{\tilde{\nu}}_t\bigl((\operatorname{id}\otimes\omega)[(\sigma^{\tilde{\nu}}_{-t}
\otimes\sigma^{\tilde{\mu}}_{t})(E)]\bigr)
=\bigl(\operatorname{id}\otimes(\omega\circ\sigma^{\tilde{\mu}}_t)\bigr)(E).
\end{equation}
Since $B=\overline{\bigl\{(\operatorname{id}\otimes\omega)(E):\omega\in{\mathcal B}
({\mathcal H}_{\tilde{\mu}})_*\bigr\}}^{\|\ \|}$, this means that $\sigma^{\tilde{\nu}}_t(B)=B$, 
for all $t\in\mathbb{R}$.  By a similar argument, we can show that $\sigma^{\tilde{\mu}}_t(C)=C$, 
for all $t\in\mathbb{R}$.

(2). Let $\nu=\tilde{\nu}|_B$.  It is a faithful weight because $\tilde{\nu}$ is.  To see that $\nu$ 
is semi-finite, consider  $(\operatorname{id}\otimes\omega_{\Lambda_{\tilde{\mu}}(c_1),
\Lambda_{\tilde{\mu}}(c_2)})(E)\in B$, for $c_1,c_2\in{\mathcal T}_{\tilde{\mu}}$.  We know that 
such elements span a dense subspace of $B$.  By Equation~\eqref{(BCfromNLeqn1)}, we have: 
\begin{align}
\nu\bigl((\operatorname{id}\otimes\omega_{\Lambda_{\tilde{\mu}}(c_1),
\Lambda_{\tilde{\mu}}(c_2)})(E)\bigr)
&=\tilde{\nu}\bigl((\operatorname{id}\otimes\omega_{\Lambda_{\tilde{\mu}}(c_1),
\Lambda_{\tilde{\mu}}(c_2)})(E)\bigr)
=\tilde{\nu}\bigl(\gamma_N^{-1}(c_1\sigma^{\tilde{\mu}}_{-i}(c_2^*))\bigr)
\notag \\
&=\tilde{\mu}\bigl(c_1\sigma^{\tilde{\mu}}_{-i}(c_2^*)\bigr)\,<\infty.
\notag
\end{align}
In the last line, we used the fact that $\gamma_N^{-1}=R^{-1}\circ\sigma^{\tilde{\mu}}_{-i/2}$ 
(analogous to Proposition~\ref{gammagamma'eqns}), so $\tilde{\nu}\circ\gamma_N^{-1}
=\tilde{\mu}\circ\sigma^{\tilde{\mu}}_{-i/2}=\tilde{\mu}$.  In this way, we have shown 
that $\nu$ is valid on the dense subspace $\bigl\{(\operatorname{id}\otimes
\omega_{\Lambda_{\tilde{\mu}}(c_1),\Lambda_{\tilde{\mu}}(c_2)})(E): c_1,c_2
\in{\mathcal T}_{\tilde{\mu}}\bigr\}$, which forms a core for $\nu$.

Meanwhile, since $\sigma^{\tilde{\nu}}_t(B)=B$, we may write $\sigma^{\nu}_t
=\sigma^{\tilde{\nu}}_t|_B$, for $t\in\mathbb{R}$.  With $t\mapsto\sigma^{\tilde{\nu}}_t$ 
being strongly continuous, and by Equation~\eqref{(sigmanu)}, we have that for $b\in B$, 
the function $t\mapsto\sigma^{\nu}_t(b)$ is norm-continuous.  From the properties of the 
modular automorphism group $(\sigma^{\tilde{\nu}}_t)_{t\in\mathbb{R}}$ for $\tilde{\nu}$, 
we can show easily the analogous properties for $(\sigma^{\nu}_t)_{t\in\mathbb{R}}$, and 
this means that $\nu$ is a KMS weight on the $C^*$-algebra $B$.  By construction, it is 
clear that the $W^*$-lift of $\nu$ will be just $\tilde{\nu}$ on $N$.

Similarly, if we let $\mu=\tilde{\mu}|_C$, then it is a KMS weight on the $C^*$-algebra 
$C$, together with the one-parameter group of automorphisms $(\sigma^{\mu}_t)_{t\in\mathbb{R}}$, 
where $\sigma^{\mu}_t=\sigma^{\tilde{\mu}}_t|_C$.  The $W^*$-lift of $\mu$ is $\tilde{\mu}$ on $L$.  

(3). We already know that $E$ is a self-adjoint idempotent contained in $N\otimes L$. 
Therefore, by Proposition~\ref{vnEfull}, we have for any $\varepsilon>0$ and for any 
$\xi\in{\mathcal H}_{\tilde{\nu}}$, $\eta\in{\mathcal H}_{\tilde{\mu}}$, we can find a finite 
number of elements $p_1,p_2,\dots,p_n\in B$ of the form $(\operatorname{id}\otimes\omega)(E)$, 
$\omega\in{\mathcal B}({\mathcal H}_{\tilde{\mu}})_*$ and $q_1,q_2,\dots,q_n\in C$ of the 
form $(\theta\otimes\operatorname{id})(E)$, $\theta\in{\mathcal B}({\mathcal H}_{\tilde{\nu}})_*$, 
such that 
$$
\bigl\|(E-\sum_{k=1}^np_k\otimes q_k)(\xi\otimes\eta)\bigr\|<\varepsilon.
$$
Since $N$ (also $B$) acts on ${\mathcal H}_{\tilde{\nu}}$ in a non-degenerate way, and similarly 
for $L$ (also $C$) on ${\mathcal H}_{\tilde{\mu}}$, without loss of generality we may let $\xi=
\Lambda_{\nu}(b_0)=\Lambda_{\tilde{\nu}}(b_0)$ and $\eta=\Lambda_{\mu}(c_0)=\Lambda_{\tilde{\mu}}(c_0)$, 
where $b_0\in{\mathfrak N}_{\nu}$, $c_0\in{\mathfrak N}_{\mu}$.  
Then:
$$
\bigl\|(\Lambda_{\tilde{\nu}}\otimes\Lambda_{\tilde{\mu}})\bigl(E(b_0\otimes c_0)
-\sum_{k=1}^n(p_k\otimes q_k)(b_0\otimes c_0)\bigr)\bigr\|<\varepsilon.
$$
Since $b_0$, $c_0$ are arbitrary, while ${\mathfrak N}_{\nu}$ is dense in $B$ and 
${\mathfrak N}_{\mu}$ is dense in $C$, we can see that $E$ is contained in the closure of 
$B\otimes C$ under the strict topology.  In other words, $E\in M(B\otimes C)$.

(4). From (3), we saw that $E\in M(B\otimes C)$.  From (1),\,(2), we saw that $\nu$ is a KMS weight 
on $B$, together with its associated norm-continuous automorphism group $(\sigma^{\nu}_t)$.

Meanwhile, note that for $(\operatorname{id}\otimes\omega)(E)\in B\,(\subseteq N)$, 
by Proposition~\ref{EresultsVN}\,(3), we have:
\begin{align}
R\bigl((\operatorname{id}\otimes\omega)(E)\bigr)
&=R\bigl((\omega\otimes\operatorname{id})(\sigma E)\bigr)  \notag \\
&=R\bigl((\omega\otimes\operatorname{id})[(R\otimes R^{-1})(E)]\bigr)
=\bigl((\omega\circ R)\otimes\operatorname{id}\bigr)(E)\in C.
\notag
\end{align}
This shows that the ${}^*$-anti-isomorphism $R:N\to L$ restricts to $R:B\to C$.  This 
will be also a ${}^*$-anti-isomorphism.  Since $R:B\to C$ is a restriction of $R:N\to L$ and since 
$\sigma^{\nu}_t=\sigma^{\tilde{\nu}}_t|_B$, the conditions of Definition~\ref{vnseparabilitytriple} 
for $(E,N,\tilde{\nu})$ immediately give rise to the conditions of Definition~\ref{separabilitytriple}. 
This means that $(E,B,\nu)$ is a separability triple in the $C^*$-algebraic sense.  And, the 
associated $\gamma_B$, $\gamma_C$ maps are none other than the restrictions of $\gamma_N$ 
and $\gamma_L$ to $B$ and $C$, respectively, which have already appeared in (1),\,(3) of 
Proposition~\ref{BCfromNL}.
\end{proof}

\section{The nature of the base $C^*$-algebra $B$}

The requirements for $(E,B,\nu)$ being a separability triple means that some possible restrictions 
will be needed on the pair $(B,\nu)$.  This question was considered in the algebraic setting (see \cite{VDsepid}), 
and the conclusion in that framework was that $B$ has to be a direct sum of matrix algebras.  While 
the current situation is different, we do expect that some similar conditions on the base $C^*$-algebra $B$ 
will exist.  We wish to explore this question in this section.

To begin, assume that $B$ is a $C^*$-algebra, $\nu$ a KMS weight on $B$, such that the pair 
$(B,\nu)$ satisfies the relevant conditions that give us a {\em separability idempotent\/} 
$E\in M(B\otimes C)$.  Here $C$ is the opposite $C^*$-algebra of $B$.

\subsection{The subalgebra $BbB$ for a fixed $b\in B$}

Let us fix $b\in B$.  By Proposition~\ref{BtensorC}, we know that $(b\otimes1)E$ is contained 
in the $C^*$-tensor product $B\otimes C$. So, for any $k\in\mathbb{N}$, we can find a finite number 
of elements $b^{(k)}_i\in B$, $c^{(k)}_i\in C$ ($i=1,2,\dots,N_k$), such that
$$
\left\|(b\otimes1)E-\sum_{i=1}^{N_k}b^{(k)}_i\otimes c^{(k)}_i\right\|\,<\,\frac1k.
$$

Next, let $\tilde{b}\in B$ be arbitrary and let $\varepsilon>0$.  By Proposition~\ref{Efull}
(since $E$ is ``full''), we can find $\tilde{c}\in C$, $\tilde{\omega}\in C^*$, such that 
$\bigl\|\tilde{b}-(\operatorname{id}\otimes\tilde{\omega})(E(1\otimes\tilde{c}))\bigr\|
<\frac{\varepsilon}{2\|b\|}$.  Then we have:
$$
\bigl\|b\tilde{b}-b(\operatorname{id}\otimes\tilde{\omega})(E(1\otimes\tilde{c}))\bigr\|
<\frac{\varepsilon}{2}.
$$

Consider $k\in\mathbb{N}$ such that $\frac1k<\dfrac{\varepsilon}{2\|\tilde{\omega}\|\|\tilde{c}\|}$, 
and as above, find corresponding $b^{(k)}_i\in B$, $c^{(k)}_i\in C$ ($i=1,2,\dots,N_k$). Then:
\begin{align}
&\bigl\|b(\operatorname{id}\otimes\tilde{\omega})(E(1\otimes\tilde{c}))-\sum_{i=1}^{N_k}
b^{(k)}_i\tilde{\omega}(c^{(k)}_i\tilde{c})\bigr\|   \notag \\
&=\bigl\|(\operatorname{id}\otimes\tilde{\omega})\bigl([(b\otimes1)E-
\sum_{i=1}^{N_k}b^{(k)}_i\otimes c^{(k)}_i](1\otimes\tilde{c})\bigr)\bigr\|
<\frac1k\cdot\|\tilde{\omega}\|\|\tilde{c}\|<\frac{\varepsilon}{2}.
\notag
\end{align}
Combine this with the earlier inequality to obtain (by triangle inequality):
$$
\bigl\|b\tilde{b}-\sum_{i=1}^{N_k}b^{(k)}_i\tilde{\omega}(c^{(k)}_i\tilde{c})\bigr\|<\frac{\varepsilon}{2}
+\frac{\varepsilon}{2}=\varepsilon.
$$

In the above, each $\tilde{\omega}(c^{(k)}_i\tilde{c})\in\mathbb{C}$.  So we have shown that 
any $b\tilde{b}$ is within $\varepsilon$-distance from $\operatorname{span}(b^{(k)}_i:
k\in\mathbb{N},i=1,2,\dots,N_k)$.  And, this result is true for arbitrary $\tilde{b}\in B$ 
and any $\varepsilon>0$.  We thus have: 
$$
bB\subseteq\overline{\operatorname{span}(b^{(k)}_i:k\in\mathbb{N},i=1,2,\dots,N_k)}^{\|\ \|}.
$$

From the countable generating set $\{b^{(k)}_i\}$ for $bB$, consider a maximal linearly independent 
(basis) subset $\{b_{\lambda}\}_{\lambda\in\Lambda}$, indexed by a countable set $\Lambda$. 
We obtain the following result:

\begin{prop}\label{bBgenerators}
\begin{enumerate}
  \item For a fixed $b\in B$, we can find a countable collection of elements $\{b_{\lambda}\}_{\lambda
\in\Lambda}$ contained in $B$, such that
$$
bB\subseteq\overline{\operatorname{span}(b_{\lambda}:\lambda\in\Lambda)}^{\|\ \|}.
$$
  \item For any $k\in\mathbb{N}$, we can find a finite set $F_k\subset\Lambda$ and a finite number 
of elements $\{c_{\lambda}\}_{\lambda\in F_k}$ in $C$, such that
$$
\bigl\|(b\otimes1)E-\sum_{\lambda\in F_k}b_{\lambda}\otimes c_{\lambda}\bigr\|<\frac1k.
$$
  \item For any $z\in bB$ and any $\varepsilon>0$, we can find a finite set $\tilde{F}\subset\Lambda$ and  
a finite number of elements $\{\tilde{c}_{\lambda}\}_{\lambda\in\tilde{F}}$ in $C$, such that
$$
\bigl\|(z\otimes1)E-\sum_{\lambda\in\tilde{F}}b_{\lambda}\otimes\tilde{c}_{\lambda}\bigr\|<\varepsilon.
$$
\end{enumerate}
\end{prop}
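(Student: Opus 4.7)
Parts (1) and (2) follow almost immediately from the construction carried out just before the statement. That construction produces a countable family $\{b^{(k)}_i\}_{k\in\mathbb{N},\,1\le i\le N_k}$ in $B$ satisfying the approximation bound $\|(b\otimes 1)E-\sum_{i=1}^{N_k}b^{(k)}_i\otimes c^{(k)}_i\|<1/k$, and it is shown there that $b\tilde b\in\overline{\operatorname{span}\{b^{(k)}_i\}}^{\|\,\|}$ for every $\tilde b\in B$. Let $\{b_\lambda\}_{\lambda\in\Lambda}$ be a maximal linearly independent subset of this countable family; then $\Lambda$ is countable and every $b^{(k)}_i$ is a finite linear combination of the $b_\lambda$, which gives (1). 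For (2), substitute these finite linear combinations into $\sum_i b^{(k)}_i\otimes c^{(k)}_i$ and absorb the scalars into the second tensor factor, rewriting the approximating sum as $\sum_{\lambda\in F_k}b_\lambda\otimes c_\lambda$ for a finite $F_k\subset\Lambda$ and elements $c_\lambda\in C$.

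The substantive content is (3). Given $z\in bB$, write $z=b\tilde b$ with $\tilde b\in B$ (if $bB$ is read as its norm closure, first approximate $z$ by such a product and absorb the extra error). The key observation is the factorisation $(z\otimes 1)E=(b\otimes 1)(\tilde b\otimes 1)E$. By Proposition \ref{BtensorC}, $(\tilde b\otimes 1)E\in B\otimes C$, so for any $\delta>0$ we can find finitely many $a_j\in B$, $c_j\in C$ with $\bigl\|(\tilde b\otimes 1)E-\sum_j a_j\otimes c_j\bigr\|<\delta$. Left-multiplying by $b\otimes 1$ yields
\[
\Bigl\|(z\otimes 1)E-\sum_j(ba_j)\otimes c_j\Bigr\|\le\|b\|\,\delta.
\]
Each $ba_j$ lies in $bB$ and hence, by (1), in the closed linear span of $\{b_\lambda\}$. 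Fix $\delta'>0$ and approximate each $ba_j$ within $\delta'$ by a finite combination $\sum_\lambda\alpha^{(j)}_\lambda b_\lambda$. Set $\tilde c_\lambda:=\sum_j\alpha^{(j)}_\lambda c_j$; this is a finite sum in $C$ and only finitely many $\lambda\in\Lambda$ contribute nontrivially, forming the set $\tilde F$. Combining the two estimates,
\[
\Bigl\|(z\otimes 1)E-\sum_{\lambda\in\tilde F} b_\lambda\otimes\tilde c_\lambda\Bigr\|\le\|b\|\,\delta+\delta'\sum_j\|c_j\|,
\]
which is at most $\varepsilon$ once $\delta$ and then $\delta'$ are chosen sufficiently small.

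The only real subtlety is that $E$ does not commute with $\tilde b\otimes 1$, so one cannot simply apply (2) and then multiply on the right by $\tilde b\otimes 1$: the product $(b\otimes 1)E(\tilde b\otimes 1)$ is \emph{not} equal to $(b\tilde b\otimes 1)E$. The correct order is therefore to first use Proposition \ref{BtensorC} to approximate $(\tilde b\otimes 1)E$ inside $B\otimes C$, and then left-multiply by $b\otimes 1$; this is what forces the appearance of the elements $ba_j\in bB$ whose basis expansion in $\{b_\lambda\}$ is then controlled by (1). Everything else is a standard two-step $\varepsilon/2$ estimate.
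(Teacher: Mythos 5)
Your parts (1) and (2) are essentially the paper's argument: extract a maximal linearly independent subfamily of the countable generating set and rewrite the finite approximating sums accordingly. For part (3) your argument is correct but follows a genuinely different route. The paper takes $\tilde b\in{\mathcal D}(\gamma_C^{-1})$ with $b\tilde b$ close to $z$ and uses the separability relation $(\tilde b\otimes 1)E=\bigl(1\otimes\gamma_C^{-1}(\tilde b)\bigr)E$ from Proposition~\ref{sepidgamma'} to get $(b\tilde b\otimes1)E=\bigl(1\otimes\gamma_C^{-1}(\tilde b)\bigr)(b\otimes1)E$; the factor $\tilde b$ is thereby pushed into the second leg, so the approximation from part (2) can simply be left-multiplied by $1\otimes\gamma_C^{-1}(\tilde b)$, leaving the first legs $b_\lambda$ untouched and only replacing $c_\lambda$ by $\gamma_C^{-1}(\tilde b)c_\lambda$. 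You instead factor $(z\otimes1)E=(b\otimes1)\bigl[(\tilde b\otimes1)E\bigr]$, approximate $(\tilde b\otimes1)E$ inside $B\otimes C$ via Proposition~\ref{BtensorC}, and then use part (1) to re-expand the resulting first legs $ba_j\in bB$ in the closed span of $\{b_\lambda\}$. Your version avoids the densely defined map $\gamma_C^{-1}$ altogether (no ``without loss of generality $\tilde b\in{\mathcal D}(\gamma_C^{-1})$'' step is needed), at the cost of an extra approximation layer and the attendant bookkeeping with $\delta$, $\delta'$ and $\sum_j\|c_j\|$, which you order correctly ($\delta$ first, then $\delta'$). Both arguments are valid; the paper's is shorter because it reuses the index set $F_k$ from (2) verbatim, while yours is more elementary in that it only invokes Proposition~\ref{BtensorC} and part (1). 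Your closing remark that one cannot simply right-multiply the estimate of (2) by $\tilde b\otimes1$ correctly identifies the obstruction that both proofs are designed to get around.
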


\begin{proof}
(1). Each generator $b^{(k)}_i$ for $bB$ may be expressed as a linear combination of the 
$\{b_{\lambda}\}$. So the result is clear.

(2). For any finite sum of the form $\sum_{k,i}b^{(k)}_i\otimes c^{(k)}_i$, we can find a finite set 
$F\subset\Lambda$ so that $\sum_{k,i}b^{(k)}_i\otimes c^{(k)}_i=\sum_{\lambda\in F}b_{\lambda}
\otimes c_{\lambda}$, where the $c_{\lambda}$ are appropriate elements in $C$.  In particular, 
for a fixed $k\in\mathbb{N}$, we can find a finite set $F_k\subset\Lambda$ and suitable elements 
$\{c_{\lambda}\}_{\lambda\in F_k}$, such that $\sum_{i=1}^{N_k}b^{(k)}_i\otimes c^{(k)}_i
=\sum_{\lambda\in F_k}b_{\lambda}\otimes c_{\lambda}$.  Then we have: 
$\bigl\|(b\otimes1)E-\sum_{\lambda\in F_k}b_{\lambda}\otimes c_{\lambda}\bigr\|<\frac1k$.

(3). Find $\tilde{b}\in B$ such that $\|z-b\tilde{b}\|<\frac{\varepsilon}2$.  Without loss of generality, 
we may assume that $\tilde{b}\in{\mathcal D}(\gamma_C^{-1})$, such that $(\tilde{b}\otimes1)E=
\bigl(1\otimes\gamma_C^{-1}(\tilde{b})\bigr)E$.  Note that $(b\tilde{b}\otimes1)E
=\bigl(b\otimes\gamma_C^{-1}(\tilde{b})\bigr)E
=\bigl(1\otimes\gamma_C^{-1}(\tilde{b})\bigr)(b\otimes1)E$.

Find $k\in\mathbb{N}$ such that $\frac1k<\dfrac{\varepsilon}{2\|\gamma_C^{-1}(\tilde{b})\|}$, 
then by (2) we have a finite subset $\tilde{F}=F_k\subset\Lambda$ such that:
\begin{align}
\bigl\|(b\tilde{b}\otimes1)E-\sum_{\lambda\in\tilde{F}}b_{\lambda}\otimes
\gamma_C^{-1}(\tilde{b})c_{\lambda}\bigr\|
&=\bigl\|\bigl(1\otimes\gamma_C^{-1}(\tilde{b})\bigr)[(b\otimes1)E-\sum_{\lambda\in\tilde{F}}
b_{\lambda}\otimes c_{\lambda}]\bigr\|   \notag \\
&<\frac1k\cdot\|\gamma_C^{-1}(\tilde{b})\|<\frac{\varepsilon}2.
\notag
\end{align}

Write $\tilde{c}_{\lambda}=\gamma_C^{-1}(\tilde{b})c_{\lambda}$.  Then, 
$\bigl\|(b\tilde{b}\otimes1)E
-\sum_{\lambda\in\tilde{F}}b_{\lambda}\otimes\tilde{c}_{\lambda}\bigr\|<\frac{\varepsilon}2$. 
Meanwhile, $\bigl\|(z\otimes1)E-(b\tilde{b}\otimes1)E\bigr|\le\|z-b\tilde{b}\|\|E\|
<\frac{\varepsilon}2$.  By the triangle inequality, we have:
$\bigl\|(z\otimes1)E-\sum_{\lambda\in\tilde{F}}b_{\lambda}\otimes\tilde{c}_{\lambda}\bigr\|
<\varepsilon$.
\end{proof}

Next, for a typical generator $b_{\lambda}$ (for a fixed $\lambda\in\Lambda$), we wish to explore 
the space $Bb_{\lambda}$.  Since $E(b_{\lambda}\otimes1)$ is contained in the $C^*$-tensor 
product $B\otimes C$, for any $m\in\mathbb{N}$ we can find a finite number of elements 
$b_{\lambda,j}^{(m)}\in B$, $c_{\lambda,j}^{(m)}\in C$ ($j=1,2,\dots,N_m$), such that
$$
\bigl\|E(b_{\lambda}\otimes1)-\sum_{j=1}^{N_m}b_{\lambda,j}^{(m)}\otimes c_{\lambda,j}^{(m)}\bigr\|
<\frac1m.
$$

From the set $\{b_{\lambda,j}^{(m)}:m\in\mathbb{N},j=1,2,\dots,N_m\}$, find a maximal linearly 
independent subset $\{b_{\lambda,q}\}_{q\in Q_{\lambda}}$, indexed by a countable set 
$Q_{\lambda}$.  Then we will have similar results as before:

\begin{prop}\label{Bb_lambdagenerators}
\begin{enumerate}
  \item $Bb_{\lambda}\subseteq\overline{\operatorname{span}(b_{\lambda,q}:
q\in Q_{\lambda})}^{\|\ \|}$.
  \item For any $y\in Bb_{\lambda}$ and any $\varepsilon>0$, we can find a finite set $\tilde{F}
\subset Q_{\lambda}$ and a finite number of elements $\{\tilde{c}_{\lambda}\}_{\lambda\in\tilde{F}}$ 
in $C$, such that
$$
\bigl\|(E(y\otimes1)-\sum_{q\in\tilde{F}}b_{\lambda,q}\otimes\tilde{c}_{\lambda}\bigr\|<\varepsilon.
$$
\end{enumerate}
\end{prop}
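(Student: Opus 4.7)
The plan is to mirror the proof of Proposition~\ref{bBgenerators} almost verbatim, but with the left and right sides of $E$ swapped throughout. Three ingredients do the work: the fullness of $E$ in the form $B=\overline{\{(\operatorname{id}\otimes\omega)((1\otimes c)E):c\in C,\omega\in C^*\}}^{\|\cdot\|}$ from Proposition~\ref{Efull}; the identity $E(\tilde b\otimes 1)=E(1\otimes\gamma_B(\tilde b))$ for $\tilde b\in\mathcal D(\gamma_B)$ from Proposition~\ref{sepidgamma}\,(1); and the finite-sum approximation $\|E(b_\lambda\otimes 1)-\sum_{j=1}^{N_m}b_{\lambda,j}^{(m)}\otimes c_{\lambda,j}^{(m)}\|<1/m$ that was set up immediately before the statement, together with the fact that each $b_{\lambda,j}^{(m)}$ is a linear combination of the chosen basis $\{b_{\lambda,q}:q\in Q_\lambda\}$ by maximality.

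For part (1), I would take $\tilde b\in B$ and $\varepsilon>0$ arbitrary. By fullness, choose $\tilde c\in C$ and $\tilde\omega\in C^*$ with $\|\tilde b-(\operatorname{id}\otimes\tilde\omega)((1\otimes\tilde c)E)\|<\varepsilon/(2\|b_\lambda\|)$, so that multiplying on the right by $b_\lambda$ and pushing $b_\lambda$ inside the slice gives
$$
\bigl\|\tilde b\,b_\lambda-(\operatorname{id}\otimes\tilde\omega)\bigl((1\otimes\tilde c)E(b_\lambda\otimes 1)\bigr)\bigr\|<\varepsilon/2.
$$
Then pick $m\in\mathbb N$ with $1/m<\varepsilon/(2\|\tilde\omega\|\|\tilde c\|)$ and insert the finite-sum approximation of $E(b_\lambda\otimes 1)$; this brings the above expression within $\varepsilon/2$ of $\sum_{j=1}^{N_m}b_{\lambda,j}^{(m)}\,\tilde\omega(\tilde c\,c_{\lambda,j}^{(m)})$, which is a scalar combination of the $b_{\lambda,j}^{(m)}$'s and hence lies in $\operatorname{span}(b_{\lambda,q}:q\in Q_\lambda)$. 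A triangle inequality completes the claim.

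For part (2), given $y\in Bb_\lambda$ and $\varepsilon>0$, write $y=b'\,b_\lambda$ and use density of $\mathcal D(\gamma_B)$ in $B$ to pick $\tilde b\in\mathcal D(\gamma_B)$ with $\|b'-\tilde b\|\cdot\|b_\lambda\|<\varepsilon/2$, so $\|y-\tilde b\,b_\lambda\|<\varepsilon/2$. By Proposition~\ref{sepidgamma}\,(1),
$$
E(\tilde b\,b_\lambda\otimes 1)=E(\tilde b\otimes 1)(b_\lambda\otimes 1)=\bigl(1\otimes\gamma_B(\tilde b)\bigr)E(b_\lambda\otimes 1).
$$
Choose $m\in\mathbb N$ with $1/m<\varepsilon/(2(\|\gamma_B(\tilde b)\|+1))$ and apply the finite-sum approximation of $E(b_\lambda\otimes 1)$ to place $E(\tilde b\,b_\lambda\otimes 1)$ within $\varepsilon/2$ of $\sum_{j=1}^{N_m}b_{\lambda,j}^{(m)}\otimes\gamma_B(\tilde b)\,c_{\lambda,j}^{(m)}$. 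Since $\|E\|\le 1$, we also have $\|E(y\otimes 1)-E(\tilde b\,b_\lambda\otimes 1)\|<\varepsilon/2$; a triangle inequality followed by re-expanding each $b_{\lambda,j}^{(m)}$ in the basis $\{b_{\lambda,q}\}$ and collecting the resulting coefficients in $C$ produces a finite $\tilde F\subset Q_\lambda$ and elements $\tilde c_q\in C$ with $\|E(y\otimes 1)-\sum_{q\in\tilde F}b_{\lambda,q}\otimes\tilde c_q\|<\varepsilon$.

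The one spot needing vigilance is the density step in part (2): the identity used to move $\tilde b$ across $E$ requires $\tilde b\in\mathcal D(\gamma_B)=B_0$ rather than in all of $B$, exactly analogous to the passage to $\mathcal D(\gamma_C^{-1})$ in Proposition~\ref{bBgenerators}\,(3). Since $B_0$ is dense in $B$ this is not a real obstruction, but it is the single structural point where the proof is not a purely formal rearrangement of the earlier argument.
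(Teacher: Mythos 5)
Your proposal is correct and is exactly the left--right mirror of the proof of Proposition~\ref{bBgenerators} that the paper's one-line proof (``similar to that of Proposition~\ref{bBgenerators}'') alludes to: fullness of $E$ on the appropriate side, the finite-sum approximation of $E(b_\lambda\otimes 1)$, and Proposition~\ref{sepidgamma}\,(1) in place of Proposition~\ref{sepidgamma'}\,(5), with the passage to $\tilde b\in\mathcal D(\gamma_B)$ correctly identified as the only non-formal step. One cosmetic slip in part (2): since Proposition~\ref{sepidgamma}\,(1) gives $E(\tilde b\otimes 1)=E\bigl(1\otimes\gamma_B(\tilde b)\bigr)$, the factor lands on the \emph{right} of $E$, i.e.\ $E(\tilde b\,b_\lambda\otimes 1)=E(b_\lambda\otimes 1)\bigl(1\otimes\gamma_B(\tilde b)\bigr)$ rather than $\bigl(1\otimes\gamma_B(\tilde b)\bigr)E(b_\lambda\otimes 1)$, so the approximant should be $\sum_{j}b_{\lambda,j}^{(m)}\otimes c_{\lambda,j}^{(m)}\gamma_B(\tilde b)$; this changes neither the norm estimate nor the conclusion.
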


\begin{proof}
Proof is similar to that of Proposition~\ref{bBgenerators}.
\end{proof}

Proposition~\ref{Bb_lambdagenerators} is valid for any $b_{\lambda}$, for $\lambda
\in\Lambda$.  Meanwhile, it is evident that $BbB\subseteq\overline{\cup_{\lambda\in\Lambda}
Bb_{\lambda}}$.  So $BbB\subseteq\overline{\operatorname{span}
(b_{\lambda,q}:\lambda\in\Lambda,q\in Q_{\lambda})}^{\|\ \|}$. 
Again, similarly as above, find $\{b_p\}_{p\in P}$, indexed by a countable set $P$ and is 
a maximal linearly independent subset of the $\{b_{\lambda,q}\}$.  Then we have the following 
result:

\begin{theorem}\label{BbBgenerators}
\begin{enumerate}
  \item For a fixed $b\in B$, we can find a countable set of elements $\{b_p\}_{p\in P}$ in $B$, 
such that
$$
BbB\subseteq\overline{\operatorname{span}(b_p:p\in P)}^{\|\ \|}.
$$
  \item For any $x\in BbB$ and any $\varepsilon>0$, we can find a finite set $\tilde{F}\subset P$ 
and a finite number of elements $\{\tilde{c}_p\}_{p\in\tilde{F}}$ in $C$, such that
$$
\bigl\|(x\otimes1)E-\sum_{p\in\tilde{F}}b_p\otimes\tilde{c}_p\bigr\|<\varepsilon.
$$
\end{enumerate}
\end{theorem}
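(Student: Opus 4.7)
The plan is to combine Propositions \ref{bBgenerators} and \ref{Bb_lambdagenerators} through a three-level approximation argument, mirroring the iterative passage already made from $bB$ to $Bb_\lambda$. Part (1) is essentially a bookkeeping consequence of the construction: starting from the evident inclusion $BbB \subseteq \overline{\bigcup_{\lambda \in \Lambda} Bb_\lambda}$ noted just above the theorem, together with Proposition \ref{Bb_lambdagenerators}(1) which gives $Bb_\lambda \subseteq \overline{\operatorname{span}(b_{\lambda,q} : q \in Q_\lambda)}$ for each $\lambda$, we conclude that $BbB$ is contained in the norm closure of $\operatorname{span}(b_{\lambda,q} : \lambda \in \Lambda, q \in Q_\lambda)$, which by the choice of $\{b_p\}_{p \in P}$ as a maximal linearly independent subfamily equals $\overline{\operatorname{span}(b_p : p \in P)}$. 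Countability of $P$ follows from a countable union of countable sets.

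For part (2), given $x \in BbB$ and $\varepsilon > 0$, I would proceed in three stages. First, I would approximate $x$ in norm by a finite sum $x_0 = \sum_{k=1}^n \alpha_k y_k$ with $\alpha_k \in B$ and $y_k = b\beta_k \in bB$, tight enough that $\|((x - x_0) \otimes 1)E\| < \varepsilon/3$. Second, I would apply Proposition \ref{bBgenerators}(3) to each $y_k$ to find finite sets $F_k \subset \Lambda$ and elements $c^{(k)}_\lambda \in C$ with $\|(y_k \otimes 1)E - \sum_{\lambda \in F_k} b_\lambda \otimes c^{(k)}_\lambda\|$ small enough that, after multiplying by $(\alpha_k \otimes 1)$ on the left and summing in $k$, the resulting finite approximation $\sum_{k,\lambda} (\alpha_k b_\lambda) \otimes c^{(k)}_\lambda$ to $(x_0 \otimes 1)E$ lies within $\varepsilon/3$. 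Third, since every left leg $\alpha_k b_\lambda$ belongs to $Bb_\lambda$, Proposition \ref{Bb_lambdagenerators}(1) lets me replace it in norm by a finite linear combination $\sum_q \gamma^{(k,\lambda)}_q b_{\lambda,q}$, producing a finite approximation of the form $\sum_{\lambda, q} b_{\lambda,q} \otimes c'_{\lambda,q}$; expressing each $b_{\lambda,q}$ exactly as a finite $\mathbb{C}$-linear combination of the basis vectors $b_p$ then yields the desired sum $\sum_{p \in \tilde F} b_p \otimes \tilde c_p$ within $\varepsilon$ of $(x \otimes 1)E$.

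The only real obstacle is the error bookkeeping: the elements $c^{(k)}_\lambda \in C$ produced by Proposition \ref{bBgenerators}(3) are not controlled in norm at the outset, so in stage three I must fix the finite collection $\{c^{(k)}_\lambda\}$ first and only then choose the tolerance of the $Bb_\lambda$-approximations $\alpha_k b_\lambda \approx \sum_q \gamma^{(k,\lambda)}_q b_{\lambda,q}$ small enough to absorb $\max_{k,\lambda} \|c^{(k)}_\lambda\|$ in the tensor-norm estimate. Beyond this triangle-inequality accounting, no further analytic input is needed; the whole argument is a straightforward synthesis of the fullness of $E$ and the dense-span results already established in the build-up to the theorem.
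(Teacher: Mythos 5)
Your proposal is correct and follows essentially the same route as the paper: decompose $x$ into a finite sum of products $b'bb''$, push the approximation of $(b\otimes1)E$ coming from Proposition~\ref{bBgenerators} through the left multiplication, and then re-express the resulting left legs, which lie in $Bb_{\lambda}$, via Proposition~\ref{Bb_lambdagenerators} and the basis $\{b_p\}$. The only cosmetic difference is that the paper moves the right factor $b''$ to the second tensor leg via $\gamma_C^{-1}$ and then invokes Proposition~\ref{bBgenerators}\,(2), whereas you invoke Proposition~\ref{bBgenerators}\,(3) directly (whose proof is that same $\gamma_C^{-1}$ manipulation); your error bookkeeping, fixing the finitely many $c^{(k)}_{\lambda}$ before choosing the tolerances of the $Bb_{\lambda}$-approximations, is if anything more explicit than the paper's.
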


\begin{proof}
(1). This is a consequence of the previous paragraph.

(2). Let $x\in BbB$ and $\varepsilon>0$ be arbitrary. We can find a finite number of elements 
$b'_1,b''_1;b'_2,b''_2;\dots,b'_n,b''_n\in B$ such that 
$$
\bigl\|x-(b'_1bb''_1+b'_2bb''_2+\dots+b'_nbb''_n)\bigr\|<\frac{\varepsilon}3.
$$
Since $\gamma_C^{-1}$ is densely-defined in $B$, we may as well take each 
$b''_i\in{\mathcal D}(\gamma_C^{-1})$.  Then $(b'_ibb''_i\otimes1)E
=\bigl(b'_ib\otimes\gamma_C^{-1}(b''_i)\bigr)E
=\bigl(b'_i\otimes\gamma_C^{-1}(b''_i)\bigr)(b\otimes1)E$.

We know from Proposition~\ref{bBgenerators}\,(2) that the expression $(b\otimes1)E$ can be 
approximated arbitrarily closely by a finite sum of the form $\sum_{\lambda\in F_k}b_{\lambda}
\otimes c_{\lambda}$.  This means that for each $i$, the expression 
$(b'_ibb''_i\otimes1)E$ can be approximated arbitrarily closely by a finite sum of the form 
$\sum_{\lambda\in F_k}b'_ib_{\lambda}\otimes\gamma_C^{-1}(b''_i)c_{\lambda}$.  Meanwhile, 
by Proposition~\ref{Bb_lambdagenerators} and part (1) above that each $b'_ib_{\lambda}$ 
can be expressed as a finite linear combination in terms of the $\{b_p\}_{p\in P}$.  So we can 
find a finite set $P_i\subset P$ such that $\bigl\|(b'_ibb''_i\otimes1)E-\sum_{p\in P_i}
b_p\otimes c^{(i)}_p\bigr\|<\frac{\varepsilon}{3n}$, for some appropriate $\{c^{(i)}_p\}_{p\in P_i}
\in C$.  This can be done for each $(b'_ibb''_i\otimes1)E$, $i=1,2,\dots,n$. 

We are able to find a finite set $\tilde{F}\subset P_1\cup P_2\cup\dots\cup P_n\subset P$ 
and suitable elements $\{\tilde{c}_p\}_{p\in\tilde{F}}$, such that 
$\sum_{p\in\tilde{F}}b_p\otimes\tilde{c}_p=\sum_{i=1}^n\sum_{p\in P_i}b_p\otimes c^{(i)}_p$. 
Therefore,
\begin{align}
\bigl\|(x\otimes1)E-\sum_{p\in\tilde{F}}b_p\otimes\tilde{c}_p\bigr\|
&\le\bigl\|([x-(b'_1bb''_1+b'_2bb''_2+\dots+b'_nbb''_n)]\otimes1)E\bigr\|
\notag \\
&\qquad+\sum_{i=1}^n
\bigl\|(b'_ibb''_i\otimes1)E-\sum_{p\in P_i}b_p\otimes c^{(i)}_p\bigr\|
\notag \\
&<\frac{\varepsilon}{3}\|E\|+n\cdot\frac{\varepsilon}{3n}<\varepsilon.
\notag
\end{align}
\end{proof}

\begin{rem}
The choice of the generators $\{b_p\}_{p\in P}$ is determined by $b$, but of course not unique. 
Even so, to avoid confusion, we will from now on fix the choice of the $\{b_p\}$ as the generators 
for $BbB$.  Without loss of generality, we may assume that each generator $b_p$ is a self-adjoint 
element.  In addition, since ${\mathfrak N}_{\nu}$ is dense in $B$, we may as well take each 
$b_p\in{\mathfrak N}_{\nu}$, or even $b_p\in{\mathcal T}_{\nu}$ (Tomita subalgebra).  This 
would in turn means that each $b_p$ is contained in the domains of the maps $\gamma_B$ 
and $\gamma_C^{-1}$.
\end{rem}

In the below is a lemma concerning the elements $\tilde{c}_p\in C$:

\begin{lem}\label{c_plem}
In Theorem~\ref{BbBgenerators}\,(2) above, as we approximate $(x\otimes1)E$ by 
the finite sums of the form $\sum b_p\otimes\tilde{c}_p$, it is possible to choose each 
$\tilde{c}_p$ to be contained in $\gamma_C^{-1}(BbB)$.
\end{lem}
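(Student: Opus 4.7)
The plan is to retrace the proof of Theorem~\ref{BbBgenerators}(2), but to refine the initial approximation of $(b \otimes 1)E$ so that the second legs end up inside $\gamma_C^{-1}(BbB)$. The key mechanism is the anti-homomorphism identity $\gamma_C^{-1}(b_0 b') = \gamma_C^{-1}(b')\gamma_C^{-1}(b_0)$ of Proposition~\ref{sepidgamma'}(3), which converts right-multiplication in $B$ by $b''_i \in {\mathcal D}(\gamma_C^{-1})$ into left-multiplication in $C$ by $\gamma_C^{-1}(b''_i)$. If the initial $c_\lambda$'s can be arranged to lie in $\gamma_C^{-1}(Bb)$, then each product $\gamma_C^{-1}(b''_i)c_\lambda$, and hence every $\mathbb{C}$-linear combination of such products, automatically stays in $\gamma_C^{-1}(BbB)$, because $BbB$ is a two-sided ideal of $B$.

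Assuming (after a small preliminary perturbation of $b$) that $b \in {\mathcal D}(\gamma_C^{-1})$, Proposition~\ref{sepidgamma'}(5) gives $(b\otimes 1)E = (1 \otimes \gamma_C^{-1}(b))E$. Slicing by $\theta \in B^*$ in the first factor shows that the second legs of $(b\otimes 1)E$ lie in $\overline{\gamma_C^{-1}(b)\,C}$. Since $\gamma_C^{-1}$ has dense range in $C$ (Proposition~\ref{sepidgamma'}(2)), the subspace $\gamma_C^{-1}(b)\,\gamma_C^{-1}({\mathcal D}(\gamma_C^{-1})) = \gamma_C^{-1}({\mathcal D}(\gamma_C^{-1})\cdot b) \subseteq \gamma_C^{-1}(Bb)$ is dense in $\overline{\gamma_C^{-1}(b)\,C}$. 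Thus in Proposition~\ref{bBgenerators}(2), we may refine the approximation $(b\otimes 1)E \approx \sum_\lambda b_\lambda \otimes c_\lambda$ to have $c_\lambda = \gamma_C^{-1}(\beta_\lambda b)$ with $\beta_\lambda \in {\mathcal D}(\gamma_C^{-1})$, while still $b_\lambda \in bB$ (since the first leg of $(b\otimes 1)E$ lies in $\overline{bB}$).

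With this refinement, the proof of Theorem~\ref{BbBgenerators}(2) now runs unchanged. Writing $x \approx \sum_i b'_i b b''_i$ with $b''_i \in {\mathcal D}(\gamma_C^{-1})$, one obtains
\begin{equation*}
(b'_i b b''_i \otimes 1)E \;=\; (b'_i \otimes \gamma_C^{-1}(b''_i))(b\otimes 1)E \;\approx\; \sum_\lambda b'_i b_\lambda \otimes \gamma_C^{-1}(b''_i)\gamma_C^{-1}(\beta_\lambda b).
\end{equation*}
Since ${\mathcal D}(\gamma_C^{-1})$ is closed under multiplication (implicit in Proposition~\ref{sepidgamma'}(3)), the product $\beta_\lambda b\cdot b''_i$ lies in ${\mathcal D}(\gamma_C^{-1})$, and the anti-homomorphism identity yields $\gamma_C^{-1}(b''_i)\gamma_C^{-1}(\beta_\lambda b) = \gamma_C^{-1}(\beta_\lambda b b''_i)$, with $\beta_\lambda b b''_i \in BbB$. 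Expanding each $b'_i b_\lambda \in B\cdot bB \subseteq BbB$ in the basis $\{b_p\}_{p\in P}$ and collecting terms gives $(x\otimes 1)E \approx \sum_p b_p \otimes \tilde c_p$, where $\tilde c_p = \gamma_C^{-1}\bigl(\sum_{i,\lambda}\alpha_p^{(i,\lambda)}\beta_\lambda b b''_i\bigr) = \gamma_C^{-1}(y_p)$ with $y_p \in BbB$, by $\mathbb{C}$-linearity of $\gamma_C^{-1}$.

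The main obstacle is the preliminary reduction to $b \in {\mathcal D}(\gamma_C^{-1})$: for a fixed $b \in B$, one needs to approximate $b$ in norm by some $b' \in \overline{Bb}\cap{\mathcal D}(\gamma_C^{-1})$ so that $Bb'B \subseteq \overline{BbB}$ and the approximation of $(b\otimes 1)E$ by $(b'\otimes 1)E$ is controlled. This can be arranged by a standard regularization of $b$ against an approximate unit sitting in the Tomita subalgebra ${\mathcal T}_\nu$. Once this is in place, the rest of the argument is a routine bookkeeping of error estimates, following the three-step pattern of the proof of Theorem~\ref{BbBgenerators}(2).
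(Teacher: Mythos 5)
Your overall strategy --- build the ideal membership of the second legs into the approximation of $(b\otimes1)E$ from the start, and then propagate it through the proof of Theorem~\ref{BbBgenerators}\,(2) using the anti-multiplicativity of $\gamma_C^{-1}$ --- is genuinely different from the paper's, and the propagation step itself is sound. The problem is the step you yourself flag as ``the main obstacle'': the reduction to $b\in{\mathcal D}(\gamma_C^{-1})$. Everything rests on the identity $(b\otimes1)E=(1\otimes\gamma_C^{-1}(b))E$, which requires $b\in{\mathcal D}(\gamma_C^{-1})={\mathcal D}(\sigma^{\nu}_{-i/2})$, and your proposed fix does not produce such an element. The domain ${\mathcal D}(\sigma^{\nu}_{-i/2})$ is a dense subalgebra of $B$ but not an ideal, so ``regularizing $b$ against an approximate unit in ${\mathcal T}_\nu$'' does not help: for $e\in{\mathcal T}_\nu$ and general $b$, the product $eb$ (or $be$) has no reason to lie in ${\mathcal D}(\sigma^{\nu}_{-i/2})$. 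The other standard regularization, Gaussian smearing $b\mapsto\sqrt{n/\pi}\int e^{-nt^2}\sigma^{\nu}_t(b)\,dt$, does give analytic elements converging to $b$, but these have no reason to lie in $\overline{Bb}$ or even in $BbB$, since a closed ideal of $B$ need not be $\sigma^{\nu}$-invariant. And the requirement $b'\in\overline{Bb}\cap{\mathcal D}(\gamma_C^{-1})$ cannot be relaxed: the lemma asserts an exact membership $\tilde{c}_p\in\gamma_C^{-1}(BbB)$, so if your $b'$ merely approximates $b$ in norm without generating a subideal of $BbB$, the elements $\gamma_C^{-1}(\beta_\lambda b'b''_i)$ you produce lie in $\gamma_C^{-1}(Bb'B)$, and no norm estimate converts that into membership in $\gamma_C^{-1}(BbB)$.

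The paper avoids this entirely by perturbing only the second legs, which carry no ideal constraint: one takes an arbitrary approximation $\sum_p b_p\otimes c'_p$ of $(x\otimes1)E$ with $c'_p\in{\mathcal D}(\gamma_C)$ (possible since ${\mathcal D}(\gamma_C)$ is dense in $C$), applies $\gamma_B\otimes\gamma_C$ together with the identity $(\gamma_B\otimes\gamma_C)(E)=\sigma E$ to turn this into an approximation of $E(x\otimes1)$ by $\sum_p\gamma_C(c'_p)\otimes\gamma_B(b_p)$, and then applies a Hahn--Banach functional dual to the linearly independent family $\{\gamma_B(b_p)\}$ to isolate each $\gamma_C(c'_p)$ and show it lies within $\varepsilon/2M$ of the element $(\operatorname{id}\otimes\omega)\bigl[E(x\otimes1)\bigr]\in BbB$; one then replaces $c'_p$ by a nearby $\tilde{c}_p\in\gamma_C^{-1}(BbB)$. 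The ideal membership is thus extracted a posteriori from a slice of $E(x\otimes1)$ rather than imposed at the outset, which is what lets the argument dodge the domain problem your approach runs into. If you want to salvage your route, you would first have to prove that ${\mathcal D}(\sigma^{\nu}_{-i/2})\cap\overline{Bb}$ contains elements arbitrarily close to $b$, and that is not established anywhere in the paper.
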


\begin{proof}
Since ${\mathcal D}(\gamma_B)$ is dense in $B$, we may, without loss of generality, 
assume that $x\in {\mathcal D}(\gamma_B)\cap BbB$.  Let $\varepsilon>0$. 
As in (2) above, we can find a finite subset $\tilde{F}\subset P$ and the elements 
$\{c'_p\}_{p\in\tilde{F}}$ in $C$, such that $$\bigl\|(x\otimes1)E
-\sum_{p\in\tilde{F}}b_p\otimes c'_p\bigr\|<\frac{\varepsilon}2.$$
Since ${\mathcal D}(\gamma_C)$ is dense in $C$, we may assume here that each 
$c'_p\in{\mathcal D}(\gamma_C)$.

Apply $\gamma_B\otimes\gamma_C$.  Since $\gamma_B$ is anti-multiplicative, we then have:
$$
\bigl\|(\gamma_B\otimes\gamma_C)(E)(\gamma_B(x)\otimes1)-\sum_{p\in\tilde{F}}
\gamma_B(b_p)\otimes\gamma_C(c'_p)\bigr\|<\frac{\varepsilon}2.
$$
Recall from Proposition~\ref{sigmaE} that $(\gamma_B\otimes\gamma_C)(E)=\sigma E$. 
So applying the flip map, this becomes $\bigl\|E(1\otimes\gamma_B(x))-\sum_{p\in\tilde{F}}
\gamma_C(c'_p)\otimes\gamma_B(b_p)\bigr\|<\frac{\varepsilon}2$, or equivalently 
(by Proposition~\ref{sepidgamma}), we have: 
\begin{equation}\label{(c_plemeqn)}
\bigl\|E(x\otimes1)-\sum_{p\in\tilde{F}}\gamma_C(c'_p)\otimes\gamma_B(b_p)\bigr\|
<\frac{\varepsilon}2.
\end{equation}

Let $M=|\tilde{F}|\cdot\operatorname{max}\bigl(\|b_p\|:p\in\tilde{F}\bigr)$, and consider 
a (Hahn--Banach type) functional $\omega\in C^*$, such that $\omega\bigl(\gamma_B(b_p)\bigr)
=\frac1M$ and $\omega\bigl(\gamma_B(b_q)\bigr)=0$ if $q\in\tilde{F}$ with $q\ne p$. 
This is do-able because $\gamma_B$ being an injective anti-homomorphism, the linear 
independence of the $\{b_p\}$ implies the linear independence of the $\bigl\{\gamma_B(b_p)\bigr\}$. 
Note that $\|\omega\|=\frac1M$.  Applying $\operatorname{id}\otimes\omega$ to 
Equation~\eqref{(c_plemeqn)}, we obtain:
$$
\bigl\|(\operatorname{id}\otimes\omega)[E(x\otimes1)]-\gamma_C(c'_p)\bigr\|
<\frac{\varepsilon}{2M}.
$$
Note that $(\operatorname{id}\otimes\omega)[E(x\otimes1)]\in BbB$.  In other words, we must 
have each $\gamma_C(c'_p)$ to be within $\frac{\varepsilon}{2M}$-distance from $BbB$.  

While this in itself does not automatically mean each $\gamma_C(c'_p)$ must be contained 
in $BbB$, this allows us to choose instead $\tilde{c}_p\in\gamma_C^{-1}(BbB)$ such that 
$\|c'_p-\tilde{c}_p\|=\bigl\|\gamma_C(c'_p)-\gamma_C(\tilde{c}_p)\bigr\|<\frac{\varepsilon}{2M}$, 
for each $p\in\tilde{F}$.  We would then have: 
\begin{align}
\bigl\|(x\otimes1)E-\sum_{p\in\tilde{F}}b_p\otimes\tilde{c}_p\bigr\|
&\le\bigl\|(x\otimes1)E-\sum_{p\in\tilde{F}}b_p\otimes c'_p\bigr\|
+\sum_{p\in\tilde{F}}\|b_p\otimes(c'_p-\tilde{c}_p)\bigr\|
\notag \\
&<\frac{\varepsilon}2+|\tilde{F}|\cdot\operatorname{max}\bigl(\|b_p\|:p\in\tilde{F}\bigr)\cdot
\frac{\varepsilon}{2M}<\varepsilon.
\notag
\end{align}
\end{proof}

\subsection{$BbB$ is a (separable) liminal $C^*$-algebra}

Assume now that the fixed element $b\in B$ is self-adjoint.  Then $BbB$ is a (two-sided, 
closed) ideal in $B$, so itself a $C^*$-algebra.  Also by the characterization given 
in Theorem~\ref{BbBgenerators}, we see that $BbB$ is a separable $C^*$-algebra. 
Let us explore what else we can learn about the nature of this $C^*$-algebra $BbB$.

Since $\nu$ is a KMS weight on $B$, we can perform the GNS construction, and 
obtain $(\pi,{\mathcal H},\Lambda)$, where $\Lambda$ is the (injective) GNS map.  We may 
regard $B=\pi(B)\subseteq{\mathcal B}({\mathcal H})$.  Clearly, $\pi$ is also a representation 
for the $C^*$-algebra $BbB$, and we have: $BbB=\pi(BbB)\subseteq{\mathcal B}({\mathcal H})$.

Consider now the subspace ${\mathcal H}_b:=\overline{\operatorname{span}\bigl
(\Lambda(b_p):p\in P\bigr)}^{\|\ \|_{\mathcal H}}\,\subseteq{\mathcal H}$, where the 
$b_p\in{\mathcal T}_{\nu}$  are as in Theorem~\ref{BbBgenerators}.  Under the representation 
$\pi$, the $C^*$-algebra $BbB$ leaves the subspace ${\mathcal H}_b$ invariant.  In this way, 
we obtain a subrepresentation $\pi_b$ of $BbB$, acting on ${\mathcal H}_b$.  Meanwhile, 
by Gram--Schmidt process, without loss of generality, we may further assume 
that $\nu(b_qb_p)=\bigl\langle\Lambda(b_p),\Lambda(b_q)\bigr\rangle=0$ if $p,q\in P$ 
is such that $p\ne q$, while $\nu(b_pb_p)=1$, $\forall p\in P$.  (Recall that we are assuming 
each $b_p$ to be self-adjoint).

\begin{defn}
For any $p,q\in P$, denote by $\langle b_p,b_q\rangle_E$ the operator acting on ${\mathcal H}_b$ 
such that 
$$
\langle b_p,b_q\rangle_E\xi:=\bigl\langle\xi,\Lambda(b_q)\bigr\rangle\Lambda(b_p).
$$
Clearly, these are finite-rank (actually, rank one) operators contained in ${\mathcal B}
({\mathcal H}_b)$.
\end{defn}

\begin{prop}\label{propmu}
Any operator in $\pi_b(BbB)$ can be approximated by a linear combination 
of the operators $\langle b_p,b_q\rangle_E$, $p,q\in P$.
\end{prop}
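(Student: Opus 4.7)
The plan is to lift the $B\otimes C$-norm approximations of $(x\otimes 1)E$ afforded by Theorem~\ref{BbBgenerators} and Lemma~\ref{c_plem} to an operator-norm approximation of $\pi_b(x)$ on $\mathcal{H}_b$ by finite-rank operators of the prescribed form $\langle b_p,b_q\rangle_E$. The overall shape of the argument mirrors the algebraic version: first extract a finite approximant in $B\otimes C$, then rewrite it in a symmetric double-sum shape, then slice down to $\mathcal{H}_b$.

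First, by density I would reduce to $x\in \mathcal{D}(\gamma_B)\cap BbB$. Applying Theorem~\ref{BbBgenerators}\,(2) together with Lemma~\ref{c_plem}, for any $\varepsilon>0$ I obtain a finite $\widetilde F\subseteq P$ and elements $y_p\in BbB$ ($p\in \widetilde F$) such that
\[
\Bigl\|(x\otimes 1)E-\sum_{p\in\widetilde F}b_p\otimes \gamma_C^{-1}(y_p)\Bigr\|<\varepsilon.
\]
Next, I would use Theorem~\ref{BbBgenerators}\,(1) to expand each $y_p$ as an approximate linear combination $\sum_q\alpha_{pq}b_q$ of the orthonormalized generators; with appropriate care to stay inside $\mathcal{D}(\gamma_C^{-1})$ and to control the resulting $B\otimes C$-error, this refines to
\[
\Bigl\|(x\otimes 1)E-\sum_{p,q}\alpha_{pq}\,b_p\otimes \gamma_C^{-1}(b_q)\Bigr\|<C\varepsilon,
\]
and the candidate approximant is $T:=\sum_{p,q}\alpha_{pq}\,\langle b_p,b_q\rangle_E\in\mathcal{B}(\mathcal{H}_b)$.

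To convert this into an operator-norm estimate I would work on the dense subspace $\Lambda(\mathcal{T}_\nu\cap BbB)\subseteq \mathcal{H}_b$: for $z$ in this core, $\pi_b(x)\Lambda(z)=\Lambda(xz)$ while $T\Lambda(z)=\sum_p\nu(y_pz)\Lambda(b_p)$, using orthonormality $\nu(b_qb_r)=\delta_{qr}$ and $y_p=\sum_q\alpha_{pq}b_q$. Multiplying the double approximation on the right by $(z\otimes 1)$, and invoking Proposition~\ref{sepidgamma'}\,(5) together with the anti-multiplicativity of $\gamma_C^{-1}$ from Proposition~\ref{gammagamma-1}, I arrive at
\[
\Bigl\|(xz\otimes 1)E-\sum_p b_p\otimes \gamma_C^{-1}(y_pz)\Bigr\|\le C\varepsilon\,\|\gamma_C^{-1}(z)\|.
\]
A formal application of $\operatorname{id}\otimes\mu$ (using $(\operatorname{id}\otimes\mu)(E)=1$ from Proposition~\ref{mudefnprop} and $\mu\circ\gamma_C^{-1}=\nu$) would then give $xz\approx \sum_p\nu(y_pz)\,b_p$ in $B$; applying $\Lambda$ yields $\|\pi_b(x)\Lambda(z)-T\Lambda(z)\|_{\mathcal{H}}\le C'\varepsilon\,\|\Lambda(z)\|$, and density together with boundedness of $\pi_b(x)-T$ would extend this to $\|\pi_b(x)-T\|\le C'\varepsilon$.

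The principal obstacle will be making the ``$\operatorname{id}\otimes\mu$'' slice rigorous, since $\mu$ is unbounded and cannot be applied term-by-term to a norm inequality. The intended remedy is to replace $\mu$ by an increasing net $(\omega_\alpha)\subseteq \mathcal{F}_\mu$ with $\omega_\alpha\nearrow\mu$: each $\operatorname{id}\otimes\omega_\alpha$ is a bounded slice, producing approximations in which $(\operatorname{id}\otimes\omega_\alpha)(E)$ converges strictly to $1\in M(B)$ and $\omega_\alpha(\gamma_C^{-1}(y_pz))\to\nu(y_pz)$. A limit argument using lower semi-continuity of $\mu$, boundedness of $xz$, and an approximate unit in $B$ to pass from the strict to the norm topology should then recover the bound on $\|xz-\sum_p\nu(y_pz)b_p\|$, uniformly in $\|\Lambda(z)\|\le 1$. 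Handling this uniformity--- so that the resulting approximation of $\pi_b(x)$ takes place in the operator norm on $\mathcal{H}_b$ rather than merely pointwise---is where all of the technical work concentrates.
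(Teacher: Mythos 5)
Your proposal follows essentially the same route as the paper's proof: approximate $(x\otimes1)E$ in $B\otimes C$ by finite sums $\sum_p b_p\otimes\tilde c_p$ with $\tilde c_p\in\gamma_C^{-1}(BbB)$ (via Theorem~\ref{BbBgenerators} and Lemma~\ref{c_plem}), slice with $\operatorname{id}\otimes\mu$ against a vector $\Lambda(z)$ so that the coefficients become $\nu(\,\cdot\,z)$, and expand those over the orthonormalized generators to recover the rank-one operators $\langle b_p,b_q\rangle_E$ --- your only deviations are cosmetic (working on a general dense subspace instead of the basis vectors $\Lambda(b_m)$, and using $\mu\circ\gamma_C^{-1}=\nu$ directly where the paper routes through $\mu(\tilde c_p\gamma_B(\tilde b))=\nu(\gamma_C(\tilde c_p)\tilde b)$ via $\gamma_C\circ\gamma_B=\sigma^{\nu}_i$). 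The analytic obstacles you flag at the end (justifying the unbounded slice $\operatorname{id}\otimes\mu$ applied to a norm inequality, passing from the $B$-norm to the GNS norm under $\Lambda$, and the uniformity needed to upgrade pointwise to operator-norm approximation) are real, but the paper's own proof passes over exactly these same points, so your sketch is faithful to the published argument.
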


\begin{proof}
Since $\pi_b$ arises from the GNS representation, we know that for $x\in BbB$ and 
$\Lambda(\tilde{b})\in{\mathcal H}_b$, we have: $\pi_b(x)\Lambda(\tilde{b})
=\Lambda(x\tilde{b})$.  

Let $\tilde{b}=b_m$, for $m\in P$.  As $\tilde{b}\in{\mathcal D}(\gamma_B)$, we can 
write $\tilde{b}=(\operatorname{id}\otimes\mu)\bigl(E(\tilde{b}\otimes1)\bigr)$. 
Then $x\tilde{b}=(\operatorname{id}\otimes\mu)\bigl((x\otimes1)E(\tilde{b}\otimes1)\bigr)
=(\operatorname{id}\otimes\mu)\bigl((x\otimes1)E(1\otimes\gamma_B(\tilde{b}))\bigr)$. 

Let $\varepsilon>0$ be arbitrary.  By Theorem~\ref{BbBgenerators}\,(2), we can find a finite 
subset $F\subset P$ and $\{\tilde{c}_p\}_{p\in F}$ in $C$, such that 
$\bigl\|(x\otimes1)E-\sum_{p\in F}b_p\otimes\tilde{c}_p\bigr\|<\frac{\varepsilon}{2\|\tilde{b}\|}$.  
By Lemma~\ref{c_plem}, we may assume that each $\tilde{c}_p\in\gamma_C^{-1}(BbB)$.

Observe that:
\begin{align}
\bigl\|x\tilde{b}-\sum_{p\in F}\mu(\tilde{c}_p\gamma_B(\tilde{b}))b_p\bigr\|
&=\bigl\|(\operatorname{id}\otimes\mu)\bigl([(x\otimes1)E-\sum_{p\in F}b_p\otimes\tilde{c}_p]
(1\otimes\gamma_B(\tilde{b}))\bigr)\bigr\|
\notag \\
&<\frac{\varepsilon}{2\|\tilde{b}\|}\cdot\bigl\|\gamma_B(\tilde{b})\bigr\|=\frac{\varepsilon}2.
\notag
\end{align}

We know $\mu=\nu\circ\gamma_C$.  So $\mu(\tilde{c}_p\gamma_B(\tilde{b}))
=\nu\bigl((\gamma_C\circ\gamma_B)(\tilde{b})\gamma_C(\tilde{c}_p)\bigr)$.  And, using 
the fact that $\gamma_C\circ\gamma_B=\sigma^{\nu}_i$ (see Proposition~\ref{modularauto}), 
we see that $\mu(\tilde{c}_p\gamma_B(\tilde{b}))=\nu\bigl(\gamma_C(\tilde{c}_p)\tilde{b}\bigr)$. 
As $\gamma_C(\tilde{c}_p)\in BbB$, we can approximate it arbitrarily closely by a finite 
linear combination of the $b_q$, for $q\in F^{(p)}\subset P$.  This can be done for each 
$p\in F$. Therefore, it is evident that we can find a finite linear combination of the 
expressions $\nu(b_q\tilde{b})b_p=\bigl\langle\Lambda(\tilde{b}),\Lambda(b_q)
\bigr\rangle\,b_p$, that is within $\frac{\varepsilon}2$-distance from $\sum_{p\in F}
\mu(\tilde{c}_p\gamma_B(\tilde{b}))b_p$.  

It follows that $x\tilde{b}$ can be approximated within $\varepsilon$-distance by 
a finite linear combination of the $\bigl\langle\Lambda(\tilde{b}),\Lambda(b_q)\bigr
\rangle\,b_p$.  Applying the GNS-map $\Lambda$, we see that $\pi_b(x)\Lambda(\tilde{b})$ 
can be approximated arbitrarily closely by finite linear combinations of the expressions 
$\bigl\langle\Lambda(\tilde{b}),\Lambda(b_q)\bigr\rangle\,\Lambda(b_p)
=\langle b_p,b_q\rangle_E\,\Lambda(\tilde{b})$.

All this would hold for each of the basis element $\Lambda(b_m)$, $m\in P$.
In this way, we prove that any $\pi_b(x)$, $x\in BbB$, can be approximated by 
a linear combination of the operators $\langle b_p,b_q\rangle_E$, $p,q\in P$.
\end{proof}

Write $V=\operatorname{span}\bigl(\langle b_p,b_q\rangle_E:p,q\in\Gamma\bigr)\,\bigl(\subseteq
{\mathcal B}({\mathcal H}_b)\bigr)$.  Then: 

\begin{prop}\label{Vcompactops}
For the operators $\langle b_p,b_q\rangle_E,\langle b_{p'},b_{q'}\rangle_E\in V$, we have:
\begin{enumerate}
  \item $\langle b_p,b_q\rangle_E\langle b_{p'},b_{q'}\rangle_E=\delta_{p',q}\langle b_p,b_{q'}\rangle_E$
  \item $\langle b_p,b_q\rangle_E^*=\langle b_q,b_p\rangle_E$.
\end{enumerate}
As a result, we conclude that $\overline{V}^{\| \|_{\operatorname{op}}}\cong{\mathcal B}_0
({\mathcal H}_b)$, the algebra of compact operators on ${\mathcal H}_b$.
\end{prop}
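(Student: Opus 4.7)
The plan is straightforward because the operators $\langle b_p, b_q\rangle_E$ are, by construction, rank-one operators on $\mathcal{H}_b$. The central observation is that, by the Gram--Schmidt normalization recorded just before the statement, the family $\{\Lambda(b_p)\}_{p \in P}$ is an orthonormal basis of $\mathcal{H}_b$: indeed $\langle \Lambda(b_p), \Lambda(b_q)\rangle = \nu(b_q b_p) = \delta_{pq}$, and the closed linear span of these vectors is $\mathcal{H}_b$ by the very definition of $\mathcal{H}_b$. Once this is in hand, both (1) and (2) reduce to one-line computations with the defining formula $\langle b_p, b_q\rangle_E \xi = \langle \xi, \Lambda(b_q)\rangle \Lambda(b_p)$.

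For (1), I would apply both sides to an arbitrary $\xi \in \mathcal{H}_b$ and compute
\[
\langle b_p, b_q\rangle_E \langle b_{p'}, b_{q'}\rangle_E \xi
= \langle \xi, \Lambda(b_{q'})\rangle \,\langle b_p, b_q\rangle_E \Lambda(b_{p'})
= \langle \xi, \Lambda(b_{q'})\rangle \,\langle \Lambda(b_{p'}), \Lambda(b_q)\rangle \,\Lambda(b_p).
\]
By orthonormality $\langle \Lambda(b_{p'}), \Lambda(b_q)\rangle = \delta_{p',q}$, so the right-hand side equals $\delta_{p',q}\langle \xi, \Lambda(b_{q'})\rangle \Lambda(b_p) = \delta_{p',q}\langle b_p, b_{q'}\rangle_E \xi$, proving the matrix-unit relation.

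For (2), I would verify the sesquilinear identity characterizing the adjoint. For $\xi, \eta \in \mathcal{H}_b$,
\[
\bigl\langle \langle b_p, b_q\rangle_E \xi, \eta\bigr\rangle = \langle \xi, \Lambda(b_q)\rangle\,\langle \Lambda(b_p), \eta\rangle
= \bigl\langle \xi, \langle \eta, \Lambda(b_p)\rangle \Lambda(b_q)\bigr\rangle
= \bigl\langle \xi, \langle b_q, b_p\rangle_E \eta\bigr\rangle,
\]
so $\langle b_p, b_q\rangle_E^* = \langle b_q, b_p\rangle_E$.

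Finally, combining (1) and (2) shows that $\{\langle b_p, b_q\rangle_E\}_{p,q \in P}$ is precisely the system of standard matrix units associated with the orthonormal basis $\{\Lambda(b_p)\}_{p \in P}$ of the separable Hilbert space $\mathcal{H}_b$; each $\langle b_p, b_q\rangle_E$ is the rank-one operator sending $\Lambda(b_q)$ to $\Lambda(b_p)$ and killing the other basis vectors. Therefore $V$ coincides with the $*$-algebra of finite-rank operators supported on this basis, and its operator-norm closure is the $C^*$-algebra $\mathcal{B}_0(\mathcal{H}_b)$ of compact operators by the standard density theorem. There is no serious obstacle in this proposition itself: all the real work was done earlier in setting up the generators $\{b_p\}$ (Theorem~\ref{BbBgenerators}) and arranging that their $\Lambda$-images form an orthonormal basis; the matrix-unit identities then fall out by direct calculation.
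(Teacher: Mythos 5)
Your proof is correct and follows essentially the same route as the paper's: both verify the matrix-unit relation by applying the operators to an arbitrary vector and using the orthonormality $\langle\Lambda(b_p),\Lambda(b_q)\rangle=\nu(b_qb_p)=\delta_{pq}$ arranged by the Gram--Schmidt normalization, both compute the adjoint via the sesquilinear identity, and both conclude that $V$ is the $*$-algebra of matrix units whose norm closure is ${\mathcal B}_0({\mathcal H}_b)$. Nothing further is needed.
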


\begin{proof}
These are rather standard and straightforward.

(1). Let $\xi\in{\mathcal H}_b$ be arbitrary.  we have:
\begin{align}
\langle b_p,b_q\rangle_E\langle b_{p'},b_{q'}\rangle_E\xi
&=\bigl\langle\xi,\Lambda(b_{q'})\bigr\rangle\langle b_p,b_q\rangle_E\Lambda(b_{p'}) 
\notag \\
&=\bigl\langle\xi,\Lambda(b_{q'})\bigr\rangle\bigl\langle\Lambda(b_{p'}),\Lambda(b_{q})\bigr\rangle
\Lambda(b_p).  \notag \\
&=\nu(b_q^*b_{p'})\langle b_p,b_{q'}\rangle_E\xi=\delta_{p',q}\langle b_p,b_{q'}\rangle_E\xi.
\notag
\end{align}

(2). Let $\xi,\zeta\in{\mathcal H}_b$.  Then
\begin{align}
\bigl\langle\langle b_p,b_q\rangle_E\xi,\zeta\big\rangle
&=\bigl\langle\langle\xi,\Lambda(b_q)\rangle\Lambda(b_p),\zeta\bigr\rangle
=\bigl\langle\xi,\Lambda(b_q)\bigr\rangle\bigl\langle\Lambda(b_p),\zeta\bigr\rangle
\notag \\
&=\bigl\langle\xi,\overline{\langle\Lambda(b_p),\zeta\rangle}\Lambda(b_q)\bigr\rangle
=\bigl\langle\xi,\langle\zeta,\Lambda(b_p)\rangle\Lambda(b_q)\bigr\rangle
\notag \\
&=\bigl\langle\xi,\langle b_q,b_p\rangle_E\zeta\bigr\rangle.
\notag
\end{align}
It follows that $\langle b_p,b_q\rangle_E^*=\langle b_q,b_p\rangle_E$.

(3). The results of (1), (2) show that $V$ is a ${}^*$-algebra contained in ${\mathcal B}
({\mathcal H}_b)$, and the generators behave exactly like the matrix units (see, for instance, 
Lemma~\ref{leme_ij}).  Therefore, the operator norm closure of $V$ coincides with 
${\mathcal B}_0({\mathcal H}_b)$.
\end{proof}

The following results are immediate:

\begin{theorem}\label{BbBthm}
Let $b\in B$ be a fixed self-adjoint element.  Then we have:
\begin{enumerate}
  \item $BbB=\pi_b(BbB)\subseteq{\mathcal B}_0({\mathcal H}_b)$.
  \item The $C^*$-algebra $BbB$ is liminal.
\end{enumerate}
\end{theorem}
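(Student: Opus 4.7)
The plan is to verify both parts by combining the two preceding propositions with a faithfulness argument for the compression to $\mathcal{H}_b$.

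For part (1), Proposition \ref{propmu} shows that every element of $\pi_b(BbB)$ can be approximated in operator norm by elements of $V$, and Proposition \ref{Vcompactops} identifies $\overline{V}^{\|\ \|_{\operatorname{op}}}$ with $\mathcal{B}_0(\mathcal{H}_b)$; together these give $\pi_b(BbB) \subseteq \mathcal{B}_0(\mathcal{H}_b)$. To promote this to the identification $BbB = \pi_b(BbB)$, it suffices to show that the compression $\pi_b: BbB \to \mathcal{B}(\mathcal{H}_b)$, $x \mapsto \pi(x)|_{\mathcal{H}_b}$, is injective on the ideal $BbB$. Suppose $\pi_b(x) = 0$ for some $x \in BbB$. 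Then for every $p \in P$ we have $\pi(x)\Lambda(b_p) = \Lambda(xb_p) = 0$, and since $\nu$ is faithful the GNS map $\Lambda$ is injective on $\mathfrak{N}_\nu$, so $xb_p = 0$ for all $p$. By Theorem \ref{BbBgenerators}(1), the linear span of $\{b_p : p \in P\}$ is norm-dense in $BbB$, and hence $x\cdot y = 0$ for all $y \in BbB$. Since $BbB$ is a closed two-sided $*$-ideal, it contains $x^*$, so approximating $x^*$ by a sequence in $BbB$ yields $xx^* = 0$ and therefore $x = 0$. This shows $\pi_b|_{BbB}$ is a faithful $*$-homomorphism between $C^*$-algebras, hence isometric, and $BbB$ may be identified with $\pi_b(BbB) \subseteq \mathcal{B}_0(\mathcal{H}_b)$.

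For part (2), I would invoke the classical structural fact that every $C^*$-subalgebra of $\mathcal{B}_0(\mathcal{H})$ is liminal. Concretely, such a subalgebra $A$ decomposes as a $c_0$-direct sum $\bigoplus^{c_0}_{\alpha}\mathcal{K}(H_\alpha)$ of elementary $C^*$-algebras, supported on mutually orthogonal subspaces. Each summand $\mathcal{K}(H_\alpha)$ is simple, with a unique (up to equivalence) irreducible representation acting by compact operators, so every irreducible representation of $A$ factors through such a summand and takes values in compact operators. Applying this to $A = BbB$, which by part (1) sits as a $C^*$-subalgebra of $\mathcal{B}_0(\mathcal{H}_b)$, we conclude that $BbB$ is liminal.

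The main delicate point is the injectivity of the compression $\pi_b$ on $BbB$. A priori the vectors $\Lambda(b_p)$ only span a dense subspace of $\mathcal{H}_b$, not of $\mathcal{H}$, and the approximation of elements of $BbB$ by linear combinations of $\{b_p\}$ is given in the $C^*$-norm of $B$ rather than in the GNS Hilbert norm. What makes the argument close up is precisely Theorem \ref{BbBgenerators}(1): because the span of $\{b_p\}$ is norm-dense in the entire closed ideal $BbB$, the vanishing of $x$ on every $\Lambda(b_p)$ upgrades to the annihilation relation $x\cdot BbB = 0$, after which the ideal property of $BbB$ supplies $x^* \in BbB$ and forces $x = 0$.
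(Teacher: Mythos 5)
Your proposal is correct and follows essentially the same route as the paper: part (1) is obtained by combining Proposition~\ref{propmu} with Proposition~\ref{Vcompactops}, and part (2) by the standard fact that a $C^*$-subalgebra of the compact operators is liminal. The one thing you add --- the explicit verification that the compression $\pi_b$ is injective on the ideal $BbB$ (via $xb_p=0$ for all $p$, density of $\operatorname{span}(b_p)$ in $BbB$, and $x^*\in BbB$ forcing $xx^*=0$) --- is a detail the paper leaves implicit, and your argument for it is sound.
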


\begin{proof}
(1). The result of Proposition~\ref{propmu} is that $\pi_b(BbB)$ is contained in the closure of $V$. 
Therefore, by Proposition~\ref{Vcompactops}, the result follows.

(2). Since $BbB$ is a $C^*$-algebra contained in the algebra of compact operators, 
it is {\em liminal\/}, in the sense of \cite{Dixbook}.
\end{proof}

\subsection{$B$ is a postliminal $C^*$-algebra}

By definition, a $C^*$-algebra $A$ is said to be {\em postliminal\/}, if every non-zero quotient 
$C^*$-algebra of $A$ possesses a non-zero liminal closed two-sided ideal (see \cite{Dixbook}, 
\cite{blackbook}).

In our case, suppose $J$ is any nontrivial proper (closed, two-sided) ideal of $B$, and 
let $B/J$ be its corresponding quotient $C^*$-algebra.  Since $J\ne B$, we can consider 
a self-adjoint element $b\in B$ with $b\not\in J$.  Consider the ideal $BbB$ of $B$.  Then 
$BbB+J$ is an ideal of $B$ such that $(BbB+J)/J\cong BbB/(BbB\cap J)$.  Since the latter is 
a quotient of a liminal $C^*$-algebra $BbB$, it is also liminal.  In this way, we have shown that 
the quotient $C^*$-algebra $B/J$ contains an ideal $(BbB+J)/J$, which is isomorphic to a 
liminal $C^*$-algebra.  By definition above, we can see that $B$ is {\em postliminal\/}:

\begin{theorem}
Let $(E,B,\nu)$ be a separability triple.  Then $B$ is a postliminal $C^*$-algebra.
\end{theorem}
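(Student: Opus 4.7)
The plan is to reduce the theorem to the already-established Theorem~\ref{BbBthm}, which asserts that for any self-adjoint $b\in B$, the closed two-sided ideal $BbB$ of $B$ is a liminal $C^*$-algebra. Once this is in hand, postliminality of $B$ is a largely formal deduction from the definition and standard $C^*$-algebra quotient facts.

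Recall the definition (from Dixmier): $B$ is postliminal iff every nonzero quotient $B/J$ of $B$ by a closed two-sided ideal $J\subsetneq B$ contains a nonzero closed two-sided ideal that is liminal. So I would fix an arbitrary proper closed two-sided ideal $J$ of $B$ and aim to produce such a liminal ideal inside $B/J$. Since $J\neq B$ and $J$ is spanned (even in the norm closure) by its self-adjoint part, there must exist a self-adjoint element $b\in B\setminus J$.

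With such a $b$ chosen, I would form $I:=BbB$. By Theorem~\ref{BbBthm}, $I$ is a liminal closed two-sided ideal of $B$. Then $I+J$ is also a closed two-sided ideal of $B$, and its image $(I+J)/J$ is a closed two-sided ideal of $B/J$. This image is nonzero because $b\in I\subseteq I+J$ while $b\notin J$. The second isomorphism theorem for $C^*$-algebras gives
\[
(I+J)/J\,\cong\,I/(I\cap J).
\]
Since liminality is preserved under $C^*$-quotients (standard; see \cite{Dixbook}), and $I$ is liminal, $I/(I\cap J)$ is liminal. Thus $(I+J)/J$ is a nonzero liminal closed two-sided ideal of $B/J$. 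As $J$ was arbitrary, $B$ is postliminal.

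There is essentially no obstacle in this final step — all the analytic content was already absorbed into Theorem~\ref{BbBthm}. The only things to verify carefully are that a proper ideal $J$ can always be avoided by some self-adjoint element (so that $(I+J)/J$ is genuinely nonzero) and that liminality passes to quotients; both are standard and require no further input from the separability idempotent structure.
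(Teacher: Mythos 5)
Your proposal is correct and follows essentially the same route as the paper: pick a self-adjoint $b\notin J$, invoke Theorem~\ref{BbBthm} to get that $BbB$ is liminal, and use $(BbB+J)/J\cong BbB/(BbB\cap J)$ together with the fact that quotients of liminal $C^*$-algebras are liminal. The only difference is cosmetic — you explicitly record why $(BbB+J)/J$ is nonzero, a point the paper leaves implicit.
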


\begin{rem}
For  separable $C^*$-algebras, it is well-known that being postliminal is equivalent to being 
{\em type~I\/}.
The result that $B$ is postliminal (or type I) seems compatible with the result in the 
purely algebraic case \cite{VDsepid}, where it was shown that the base algebra $B$ has 
to be a direct sum of matrix algebras.
\end{rem}

Finally, here is the result when $B$ is commutative:

\begin{prop}
If $B$ is commutative, its spectrum is totally disconnected.
\end{prop}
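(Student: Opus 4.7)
My plan is to proceed by contradiction using Gelfand duality. Since $B$ is a commutative $C^*$-algebra, I would write $B=C_0(X)$, where $X$ is its spectrum, a locally compact Hausdorff space. Assuming $X$ is not totally disconnected, some connected component $K\subseteq X$ would contain two distinct points $x_1,x_2$, and the goal would be to derive a contradiction with Theorem~\ref{BbBthm}.

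First, I would use Urysohn's lemma for locally compact Hausdorff spaces to produce a self-adjoint (that is, real-valued) element $b\in C_0(X)$ with $0\le b\le 1$, $b(x_1)=0$, and $b(x_2)=1$. Since $b|_K$ is continuous on the connected set $K$ and attains both $0$ and $1$, the image $b(K)\subseteq[0,1]$ is connected and contains $\{0,1\}$, forcing $b(K)=[0,1]$. In particular, $\overline{b(X)}$ is uncountable.

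Next, I would apply Theorem~\ref{BbBthm} to this $b$. In the commutative setting, closed ideals of $C_0(X)$ are in bijection with open subsets of $X$ via $U\mapsto C_0(U)$, and the closed ideal $BbB$ is exactly $C_0(U_b)$, where $U_b=\{x\in X:b(x)\ne 0\}$ is the cozero set of $b$. By the theorem, $BbB$ is isomorphic to a $C^*$-subalgebra of ${\mathcal B}_0({\mathcal H}_b)$; in particular, $b$ is realized as a compact operator, so its spectrum in the unitization of $BbB$ is a countable subset of $\mathbb{C}$ with $0$ as the only possible accumulation point. But this spectrum coincides with $\overline{b(X)}$, which we just saw is uncountable --- contradiction. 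Hence $X$ must be totally disconnected.

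The main obstacle I anticipate is the spectral bookkeeping: carefully identifying the spectrum of $b$ viewed as an element of the (possibly non-unital) $C^*$-algebra $C_0(U_b)$ with $\overline{b(X)}$ (using that $b(X)=b(U_b)\cup\{0\}$), and then invoking the fact that compact operators have countable spectrum with $0$ as the only possible accumulation point. The Urysohn step and the connectedness argument are standard, as is the identification $BbB=C_0(U_b)$, which uses the existence of approximate units in $C_0(X)$ together with the classical correspondence between closed ideals and open sets.
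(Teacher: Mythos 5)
Your proof is correct, but it takes a genuinely different route from the paper's. The paper argues directly: since every closed ideal of a commutative $C^*$-algebra is commutative, each building-block ideal $BbB$ is a commutative $C^*$-subalgebra of the compact operators ${\mathcal B}_0({\mathcal H}_b)$ and is therefore spanned by (minimal, mutually orthogonal) projections; as every self-adjoint element $b$ lies in its own ideal $BbB$, the algebra $B\cong C_0(X)$ is the closed span of its projections, which forces $X$ to be totally disconnected. (The paper actually asserts that each such $BbB$ is isomorphic to $\mathbb{C}$, which is more than the inclusion into the compacts literally gives --- a commutative $C^*$-subalgebra of ${\mathcal B}_0({\mathcal H})$ can be, say, the diagonal copy of $c_0$ --- but only the projection-spanning conclusion is needed.) You instead argue by contradiction: Urysohn plus connectedness of a nontrivial component produces a real-valued $b$ whose spectrum contains $[0,1]$, while Theorem~\ref{BbBthm}\,(1) realizes $b$ as a self-adjoint compact operator, whose spectrum is countable with $0$ as its only possible accumulation point; spectral permanence for $C^*$-subalgebras (together with $b\in BbB=C_0(U_b)$) identifies the two spectra and yields the contradiction. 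Your version buys a clean, fully elementary contradiction that sidesteps any structure theory of commutative subalgebras of the compacts (and the paper's overstatement); the paper's version buys the stronger structural fact that $B$ is generated by its projections. Both rest on exactly the same input, namely Theorem~\ref{BbBthm}\,(1), and both are valid.
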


\begin{proof}
If $B$ is commutative, any of its ideals is also commutative.  In particular, each building-block 
ideal $BbB$ (for $b$ self-adjoint), being a commutative subalgebra contained in the algebra 
of compact operators, would be isomorphic to $\mathbb{C}$, spanned by a single projection. 
Our $B$ would be spanned by its projections. If we write $B\cong C(X)$, where $X$ is the 
spectrum of $B$, this means that $X$ is totally disconnected.
\end{proof}

\bigskip\bigskip

%\bibliography{refartikel}

%\bibliographystyle{amsplain}

\providecommand{\bysame}{\leavevmode\hbox to3em{\hrulefill}\thinspace}
\providecommand{\MR}{\relax\ifhmode\unskip\space\fi MR }
% \MRhref is called by the amsart/book/proc definition of \MR.
\providecommand{\MRhref}[2]{%
  \href{http://www.ams.org/mathscinet-getitem?mr=#1}{#2}
}
\providecommand{\href}[2]{#2}

\end{document}